\let\nbd=\nobreakdash
\newcommand\inv{^{-1}}
\newtheorem{theorem}{Theorem}[section]
\newtheorem{proposition}[theorem]{Proposition}
\newtheorem{corollary}[theorem]{Corollary}
\newtheorem{lemma}[theorem]{Lemma}
\theoremstyle{definition}
\newtheorem{definition}[theorem]{Definition}
\newcommand{\id}{\mathrm{id}}
\newcommand{\ie}{{\it i.e.}}
\newcommand{\viz}{{\it viz.}}
\newcommand{\powers}[1]{[\kern -1pt [{#1}] \kern -1pt ]} %Tighter [[-]]
\newcommand{\nov}[1]{(\kern -1.7pt ( {#1})\kern-1.7pt )} %Tighter ((-))
\newcommand{\tensor}{\mathop{\otimes}}
\newcommand{\bZ}{\mathbb{Z}}
\newcommand{\bR}{\mathbb{R}}
\newcommand{\bN}{\mathbb{N}}
\newcommand{\fS}{\mathfrak{S}}
\newcommand{\cN}{\mathcal{N}}
\newcommand{\cY}{\mathcal{Y}}
\newcommand{\rk}{\mathrm{rk}}
\newcommand{\iso}{\cong}
\newcommand{\nix}{\,\text{-} \,}
\newcommand{\tot}{\mathrm{Tot}}
\renewcommand{\phi}{\varphi}
\newcommand{\Ch}{\textsf{Ch}\,}
\newcommand{\tora}[1]{{\mathfrak{T}({#1})}}
\newcommand{\torb}[3]{\tora{{#1},{#2};{#3}}}
\newcommand{\R}{R_*}
\newcommand{\lr}[1]{A \langle #1 \rangle}
\newcommand{\Mod}{\text{-}\mathrm{Mod}}
\numberwithin{equation}{theorem}
\numberwithin{section}{part}   %  Use part.section numbering scheme
\begin{document}

\author{Thomas H\"uttemann} \author{Luke Steers}

\email{t.huettemann@qub.ac.uk}

\address{Pure Mathematics Research Centre \\
  School of Mathematics and Physics       \\
  Queen's University Belfast              \\
  Belfast BT7 1NN                         \\
  Northern Ireland, UK}

\title[Finite domination over strongly $\bZ^2$-graded rings]%
{Finite domination and \textsc{Novikov} homology over strongly $\bZ^2$-graded
  rings}

\dedicatory{Dedicated to the memory of Andrew Ranicki}

\date{\today}

% next three lines: use MSC2020 (instead of 2010 or 1991)
\makeatletter
\@namedef{subjclassname@2020}{{\upshape 2020} Mathematics Subject Classification}
\makeatother

\subjclass[2020]{Primary 18G35; Secondary 16W60, 16W50, 16P99, 55U15}

\begin{abstract}
  Let \(R\) be a strongly \(\bZ^2\)-graded ring, and let \(C\) be a
  bounded chain complex of finitely generated free \(R\)-modules. The
  complex \(C\) is \(R_{(0,0)}\)-finitely dominated, or of type~\(FP\)
  over~\(R_{(0,0)}\), if it is chain homotopy equivalent to a bounded
  complex of finitely generated projective \(R_{(0,0)}\)-modules. We
  show that this happens if and only if \(C\) becomes acyclic after
  taking tensor product with a certain eight rings of formal power
  series, the graded analogues of classical \textsc{Novikov}
  rings. This extends results of \textsc{Ranicki}, \textsc{Quinn} and
  the first author on \textsc{Laurent} polynomial rings in one and
  two indeterminates.
\end{abstract}

\maketitle

\tableofcontents

\part{Finite Domination over strongly $\bZ^2$-graded rings}

\section{Introduction}

Let $L$ be a unital ring, and let $K$ be a subring of~$L$. A bounded
chain complex~$C$ of (right) $L$-modules is {\it $K$-finitely
  dominated\/} if $C$, considered as a complex of $K$-modules, is a
retract up to homotopy of a bounded complex of finitely generated free
$K$-modules; this happens if and only if $C$ is homotopy equivalent,
as a $K$-module complex, to a bounded complex of finitely generated
projective $K$-modules \cite[Proposition~3.2.~(ii)]{RATFO}. The
following result of \textsc{Ranicki} gives a complete homological
characterisation of finite domination in an important special case:

\begin{theorem}[\textsc{Ranicki \cite[Theorem~2]{RFDNR}}]
  \label{thm:orinigal}
  Let $K$ be a unital ring, and let $K[t,t\inv ]$ denote the
  \textsc{Laurent} polynomial ring in the indeterminate~$t$. Let $C$
  be a bounded chain complex of finitely generated free
  $K[t,t\inv ]$-modules. The complex~$C$ is $K$-finitely
  dominated if and only if both
  \begin{equation*}
    C \tensor_{K[t,t\inv ]} K\nov{t\inv } \qquad \text{and} \qquad C
    \tensor_{K[t,t\inv ]} K\nov{t}
  \end{equation*}
  have vanishing homology in all degrees. Here we use the notation
  $K\nov{t} = K\powers{t}[t\inv ]$ for the ring of formal
  \textsc{Laurent} series in~$t$, and similarly
  $K\nov{t\inv } = K\powers{t\inv }[t]$ stands for the ring of
  formal \textsc{Laurent} series in~$t\inv $.
\end{theorem}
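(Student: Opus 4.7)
\medskip
\noindent\emph{Proof plan.} The plan is to prove the two directions separately, the real work lying in deducing finite domination from Novikov acyclicity.

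For the forward direction I would set $R := K[t,t\inv]$ and invoke the classical reformulation of $K$-finite domination: any bounded complex of f.g.\ free $R$-modules which is $K$-finitely dominated is $R$-chain equivalent to an algebraic mapping torus
\begin{equation*}
T(h) \;=\; \cone\bigl(t - h \colon D \tensor_K R \to D \tensor_K R\bigr),
\end{equation*}
for a bounded complex $D$ of f.g.\ projective $K$-modules and a self-chain-equivalence $h \colon D \to D$ (the $t$-action transported along the homotopy equivalence $C \simeq_K D$). It would then suffice to check that $t - h$ becomes a chain equivalence after tensoring with each Novikov ring. Over $K\nov{t\inv}$ the operator $t\inv h$ is topologically nilpotent, so $t - h = t(1 - t\inv h)$ has an inverse provided degree-wise by $\sum_{n \geq 0}(t\inv h)^n$. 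Over $K\nov{t}$, choosing a chain homotopy inverse $h'$ of $h$ and writing $t - h \simeq -h(1 - h't)$, the series $\sum_{n \geq 0}(h't)^n$ again converges and produces an inverse. This would give acyclicity of both Novikov tensor products.

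For the reverse direction I would assume $C \tensor_R K\nov{t^{\pm 1}}$ are both acyclic and aim to reconstruct $C$, up to $R$-chain equivalence, as a mapping torus $T(h)$ of the type above; by the forward analysis this implies $K$-finite domination. The main algebraic tool would be a \textsc{Mayer}--\textsc{Vietoris}-type Cartesian square relating $R$ with the two Novikov rings and a common further localization. The two vanishing hypotheses combined with this diagram should pin $C$ down tightly enough to extract a bounded f.g.\ projective $K$-complex $D$ together with a self-chain-equivalence $h$ realizing the $t$-action. In \textsc{Ranicki}'s original treatment the production of $D$ and $h$ is carried out via algebraic transversality.

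The main obstacle is clearly the reverse direction: translating the abstract vanishing of two infinite-coefficient homologies into an actual bounded f.g.\ projective $K$-complex chain-equivalent to $C$. The forward direction, by contrast, is a direct geometric-series calculation once the mapping torus reformulation is in hand.
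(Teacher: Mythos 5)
The theorem you are asked to prove is not proved in this paper; it is quoted verbatim from Ranicki's 1995 article \cite{RFDNR} and serves only as motivation for the main result, Theorem~\ref{thm:main}, which generalises it to strongly $\bZ^2$-graded rings. So there is strictly speaking no ``paper's own proof'' to compare against, and your proposal has to be measured against the paper's proof of the generalisation.

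With that caveat, your plan is a sound reconstruction of Ranicki's original argument, and the easy half of it aligns perfectly with what the paper does. Your forward direction (finite domination $\Rightarrow$ trivial Novikov homology), via the mapping-torus model $T(h)=\cone(t-h)$ and a geometric-series inversion of $t-h$ over each Novikov ring, is exactly the mechanism of Part~\ref{part:finite-domin-impl}: the paper replaces the one-dimensional mapping torus by the algebraic torus $\torb{\alpha}{\beta}{H}$, replaces multiplication by $t$ with the structure map $\chi_{\rho}$ built from a partition of unity, and then inverts $\id^{*}-\alpha^{*}\chi_{e_i}\beta^{*}$ by the same geometric series (Lemmas~\ref{lem:nov-xy-acyclic} and~\ref{lem:Laurent-Novikov_acyclic}). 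Two minor imprecisions in your sketch are worth flagging: the series $\sum_{n\ge 0}(t^{-1}h)^n$ inverts $1-t^{-1}h$, so the inverse of $t-h$ is $t^{-1}\sum_{n\ge 0}(t^{-1}h)^n$; and over $K\nov{t}$ the identity $t-h=-h(1-h't)$ only holds up to a chain homotopy (since $hh'\simeq\id$ rather than $hh'=\id$), so one must argue that $t-h$ is \emph{chain homotopic} to $-h(1-h't)$ and then that a map chain homotopic to a chain equivalence is a chain equivalence. Both are easily repaired.

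For the hard direction (trivial Novikov homology $\Rightarrow$ finite domination) you propose Ranicki's route via a Mayer--Vietoris/Cartesian square of coefficient rings and algebraic transversality, and you candidly acknowledge this is where the work is. That is indeed how \cite{RFDNR} proceeds, and it is a legitimate proof strategy for the one-variable case. The present paper, however, takes a genuinely different route for this direction (Part~\ref{part:contr-textscn-homol}): the complex $C$ is extended to a chain complex of quasi-coherent diagrams over the poset $\fS$ of faces of a square, and $K$-finite domination is extracted from a \v Cech-complex computation (Proposition~\ref{cechcomplex}, Corollary~\ref{findomiso}, and the quasi-isomorphism $\xi\colon D(0)\to E$ of Proposition~\ref{prop:xi_qi}). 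The \v Cech approach scales to $\bZ^2$-gradings because it replaces a single Mayer--Vietoris square by a cubical/simplicial diagram of Novikov rings indexed by flags of faces; a direct two-variable transversality argument in the style of Ranicki would be considerably more delicate. So: your plan would prove the stated one-variable theorem, matches the paper on the easy direction, and diverges from the paper on the hard direction --- trading the paper's sheaf/\v Cech machinery for Ranicki's transversality, which is simpler in one variable but does not generalise as cleanly.
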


The cited paper \cite{RFDNR} also contains a discussion of the
relevance of finite domination in topology. --- The
rings~$K\nov{t}$ and~$K\nov{t\inv }$ are known as
\textsc{\textsc{Novikov}} rings. The theorem can be formulated more succinctly:
{\it The chain complex~$C$ is $K$\nobreakdash-finitely dominated
  if and only if it has trivial \textsc{\textsc{Novikov}} homology}.

This result was extended by \textsc{Quinn} and the first author to
\textsc{Laurent} polynomial rings in two variables; contrary to
appearance, this is not a straight-forward modification of the
original result, introducing additional levels of complication in
homological algebra.

\begin{theorem}[\textsc{H\"uttemann and Quinn \cite[Theorem~I.1.2]{TWOV}}]
  \label{thm:old_two}
  Let $C$ be a bounded chain complex of finitely generated free
  $L$\nbd-modules, where $L = K[x,x\inv, y, y\inv]$ is a
  \textsc{Laurent} polynomial ring in two variable over the unital
  ring~$K$. The following two statements are equivalent:
  \begin{enumerate}
  \item \label{item:findom} The complex~$C$ is $K$\nbd-finitely
    dominated, \ie, $C$~is homotopy equivalent, as an $K$\nbd-module
    chain complex, to a bounded chain complex of finitely generated
    projective $K$\nbd-modules.
  \item \label{item:acyclic} The eight chain complexes listed below
    are acyclic (all tensor products are taken over~$L$):
    % linenocrutch
    \begin{subequations}
      \begin{gather}\left.
          \begin{aligned}
            \label{eq:cond_edge}
 & C \tensor K[x,\, x\inv]\nov{y}        &  & C \tensor K[x,\,
            x\inv]\nov{y\inv}                    \\
 & C \tensor K[y,\, y\inv]\nov{x}        &  & C \tensor K[y,\,
            y\inv]\nov{x\inv}
          \end{aligned}\quad \right\}
                                                 \\
        \noalign{\smallskip}\left.
          \begin{aligned}
            \label{eq:cond_vertex}
 & C \tensor K\nov{x,\, y} \hskip 2.1 em &  & C \tensor
            K\nov{x\inv,\, y\inv} \hskip 1.15 em \\
 & C \tensor K\nov{x,\, y\inv}           &  & C \tensor K\nov{x\inv,\,
              y}
          \end{aligned}\quad \right\}
      \end{gather}
    \end{subequations}
    Here $K\nov{x,y} = K\powers{x,y}[1/xy]$ is a localisation of the
    ring of formal power series in~$x$ and~$y$, and the other rings
    are defined analogously.
  \end{enumerate}
\end{theorem}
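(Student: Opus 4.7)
The plan is to adapt and iterate Theorem~\ref{thm:orinigal} in both variables, with the eight \textsc{Novikov} conditions of \eqref{eq:cond_edge} and \eqref{eq:cond_vertex} organised by the combinatorics of the toric compactification of $\mathrm{Spec}(L) = \mathbb{G}_m^2$ into $\mathbb{P}^1 \times \mathbb{P}^1$: the four edges and four vertices of the toric boundary correspond precisely to the eight \textsc{Novikov} rings appearing in the statement, and the collective vanishing condition should be interpreted as saying that $C$ is, in a suitable derived sense, concentrated at the open torus.

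For the forward implication \ref{item:findom}$\Rightarrow$\ref{item:acyclic}, I would assume a $K$\nbd-chain homotopy equivalence $C \simeq P$ with $P$ a bounded complex of finitely generated projective $K$-modules, and transfer the $L$-action on $C$ to commuting $K$-chain self-equivalences $\phi_x, \phi_y$ of $P$ representing multiplication by $x$ and $y$. For a vertex ring such as $K\nov{x,y}$, one produces an explicit contraction of $C \tensor_L K\nov{x,y}$ using the fact that $x$ and $y$ lie in the topological Jacobson radical of that ring: operators of the form $1 - x\phi_x\inv$ and $1 - y\phi_y\inv$ become invertible on $P \tensor_K K\nov{x,y}$ via formally convergent geometric series, and these inverses assemble into the desired null-homotopy. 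A parallel argument using only one such series handles each of the four edge rings.

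For the reverse implication \ref{item:acyclic}$\Rightarrow$\ref{item:findom}, the natural first step is to invoke Theorem~\ref{thm:orinigal} with base ring $K[y, y\inv]$, viewing $L = K[y, y\inv][x, x\inv]$. The two edge conditions asserting acyclicity of $C \tensor K[y, y\inv]\nov{x}$ and $C \tensor K[y, y\inv]\nov{x\inv}$ from~\eqref{eq:cond_edge} yield that $C$ is $K[y, y\inv]$-finitely dominated, hence $K[y, y\inv]$-homotopy equivalent to a bounded complex $D$ of finitely generated projective $K[y, y\inv]$-modules. One would then apply Theorem~\ref{thm:orinigal} a second time, now to the $y$-variable acting on $D$, to deduce $K$-finite domination. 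This requires extracting from the remaining six acyclicity conditions (two edge, four vertex) the two $y$-\textsc{Novikov} vanishings for $D$ by exploiting the natural ring maps between the vertex rings and the \textsc{Novikov}-in-$y$ completions, together with a \textsc{Mayer--Vietoris}-type assembly gluing along the two $x$-completions.

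The principal obstacle is precisely this transfer step: translating \textsc{Novikov} acyclicities formulated for $C$ over $L$ into \textsc{Novikov} acyclicities for $D$ over $K[y, y\inv]$. The passage is delicate because $D$ is obtained from $C$ only up to $K[y, y\inv]$-homotopy equivalence, and the two relevant vertex-ring conditions (say those involving $K\nov{x, y}$ and $K\nov{x\inv, y}$) must be combined coherently to produce a single $y$-\textsc{Novikov} vanishing for $D$, with symmetric treatment for $y\inv$. Ensuring that the final object is a genuine bounded complex of finitely generated projective $K$-modules, rather than merely homologically trivial data, and handling the symmetric roles of $x$ and $y$ without over- or under-counting contributions from the shared corners, should constitute the bulk of the remaining technical work.
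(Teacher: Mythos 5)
Your forward implication (\ref{item:findom}$\Rightarrow$\ref{item:acyclic}) is conceptually aligned with the paper's argument (Part~\ref{part:finite-domin-impl}), which also rests on geometric-series inversions over the \textsc{Novikov} rings. However, you gloss over a real technical point: a $K$-chain equivalence $C \simeq P$ does \emph{not} give strictly commuting self-equivalences $\phi_x, \phi_y$ of~$P$; they commute only up to a chain homotopy, and that homotopy must be carried through the entire construction. The paper handles exactly this with the ``algebraic torus'' $\torb{\alpha}{\beta}{H}$ (a two-dimensional mapping torus carrying an explicit higher homotopy in the diagonal) together with the \textsc{Mather} trick; afterwards a $2\times 2$ matrix of geometric-series inverses produces the contraction. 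Your proposal would need this coherence machinery spelled out to be a proof rather than a sketch.

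The reverse implication (\ref{item:acyclic}$\Rightarrow$\ref{item:findom}) contains the genuine gap, and it is exactly where you yourself say the obstacle lies. The iterative strategy requires, after obtaining a bounded complex~$D$ of finitely generated projective $K[y,y\inv]$-modules with $C \simeq D$ as $K[y,y\inv]$-complexes, the acyclicity of $D \tensor_{K[y,y\inv]} K\nov{y}$ and $D \tensor_{K[y,y\inv]} K\nov{y\inv}$. Transporting along $C \simeq D$, this is equivalent to the acyclicity of $C \tensor_L K\nov{y}[x,x\inv]$ and $C \tensor_L K\nov{y\inv}[x,x\inv]$. But $K\nov{y}[x,x\inv]$ (Laurent polynomials in~$x$ over formal \textsc{Laurent} series in~$y$) is a strictly smaller ring than $K[x,x\inv]\nov{y}$ (formal \textsc{Laurent} series in~$y$ over Laurent polynomials in~$x$) --- for instance $\sum_{j\geq 0} x^j y^j$ lies in the latter but not the former --- and neither it nor any ring containing it flatly appears among the eight hypothesised \textsc{Novikov} conditions. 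There is no simple base change, localisation, or \textsc{Mayer--Vietoris} gluing that converts the hypotheses about $C$ into the needed vanishing for~$D$, and the paper itself flags this explicitly (``contrary to appearance, this is not a straight-forward modification of the original result''). The paper's actual route in Part~\ref{part:contr-textscn-homol} is entirely different: it extends $C$ to a bounded complex~$\cY$ of quasi-coherent diagrams indexed by faces of a square, takes \textsc{\v Cech} complexes, uses the eight acyclicity hypotheses \emph{simultaneously} to kill the flag contributions involving~$S$, and thereby exhibits $C$ as a homotopy retract of the finitely dominated complex $\tot\,\Gamma_{\fS}(\cY)$. The iterative argument, as sketched, does not close.
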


The authors of the present paper generalised
Theorem~\ref{thm:orinigal} in an entirely different direction,
exhibiting the graded structure of \textsc{Laurent} polynomial rings
as the crucial property for setting up the theory.

\begin{theorem}[\textsc{H\"uttemann and Steers \cite[Theorem~1.3]{ZGR}}]
  \label{thm:new_one}
  Let $\R[t,t\inv ] = \bigoplus_{k \in \bZ} R_{k}$ be a strongly
  $\bZ$-graded ring, and let $C$ be a bounded chain complex of
  finitely generated free $\R[t,t\inv ]$-modules. The complex~$C$ is
  $R_{0}$-finitely dominated if and only if both
  \begin{equation*}
    \label{eq:cond}
    C \tensor_{\R[t,t\inv ]} \R\nov{t\inv } \qquad \text{and} \qquad C
    \tensor_{\R[t,t\inv ]} \R\nov{t}
  \end{equation*}
  have vanishing homology in all degrees. Here the rings
  \begin{equation*}
    \R\nov{t\inv }= \bigcup_{n \geq 0} \prod_{k \leq n} R_k \quad
    \text{and} \quad 
    \R\nov{t} = \bigcup_{n \geq 0} \prod_{k \geq -n} R_k
  \end{equation*}
  are used as formal analogues of the usual \textsc{\textsc{Novikov}}
  rings.
\end{theorem}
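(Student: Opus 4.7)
The plan is to prove both implications by leveraging the structure of strongly $\bZ$-graded rings to mimic the classical \textsc{Ranicki} argument from Theorem~\ref{thm:orinigal}. The key structural input is that for a strongly $\bZ$-graded ring each homogeneous component~$R_k$ is a finitely generated invertible $R_0$-bimodule, with multiplication inducing isomorphisms $R_k \tensor_{R_0} R_\ell \iso R_{k+\ell}$; hence $\R[t,t\inv]$ is projective on either side over~$R_0$, and the Novikov rings $\R\nov{t}$, $\R\nov{t\inv}$ admit natural descriptions as one-sided completions of $\R[t,t\inv]$ by the filtration through negative (respectively positive) degrees. This input plays the role that the explicit description in terms of formal \textsc{Laurent} series plays in Ranicki's original proof.

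For the easier direction (finite domination implies Novikov acyclicity), I would start from an $R_0$-level homotopy retract $C \simeq_{R_0} D$ where~$D$ is a bounded complex of finitely generated projective $R_0$-modules, and reduce to showing acyclicity of $D \tensor_{R_0} \R\nov{t}$ after suitable change of scalars. The crucial observation is that for any finitely generated projective $R_0$-module~$P$, the complex $P \tensor_{R_0} \R\nov{t}$ admits a natural contracting homotopy built from the one-sided completion: extending any element by a formal series in the positive direction and then undoing it provides the explicit null-homotopy. Symmetric considerations handle $\R\nov{t\inv}$.

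For the harder direction (acyclicity implies finite domination), I would set up a short exact sequence of $\R[t,t\inv]$-bimodules of \textsc{Mayer}--\textsc{Vietoris} type
\begin{equation*}
  0 \to \R[t,t\inv] \to \R\nov{t} \oplus \R\nov{t\inv} \to Q \to 0
\end{equation*}
where $Q$ is a suitable two-sided \textsc{Novikov}-like completion absorbing both tails. Tensoring with~$C$ and using the hypothesis makes the two middle summands acyclic, so $C$ becomes homotopy equivalent, up to a degree shift, to $C \tensor_{\R[t,t\inv]} Q$. The remaining task is to present the latter complex as homotopy equivalent to a bounded complex of finitely generated projective $R_0$-modules, witnessing $R_0$-finite domination.

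The main obstacle, as I see it, is this last step: extracting genuine finite domination from the \textsc{Mayer}--\textsc{Vietoris} data. In the \textsc{Laurent} polynomial setting \textsc{Ranicki} handles this using a \textsc{Mather}-style subdivision of the algebraic mapping torus of multiplication by~$t$; in the strongly graded setting the ``shift'' is not implemented by a central invertible element but by tensoring with the invertible $R_0$-bimodule~$R_1$, so the subdivision has to be rephrased intrinsically in terms of $R_0$-bimodule data. Keeping the intermediate complexes bounded, finitely generated, and projective over~$R_0$ along the way is the most delicate point, and depends essentially on the finite generation of each~$R_k$ as an $R_0$-module afforded by the strong grading hypothesis.
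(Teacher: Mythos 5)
Your high-level intuitions (the geometric-series contraction, the role of the invertible $R_0$-bimodule $R_1$, a \textsc{Mayer}--\textsc{Vietoris} structure, and the \textsc{Mather} trick) are all genuinely present in the actual proof, but you have the two directions' technology essentially swapped, and both directions contain gaps as written.

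For ``finite domination implies \textsc{Novikov} acyclicity'': the reduction to ``acyclicity of $D \tensor_{R_0} \R\nov{t}$'' cannot be right, since $D$ is an arbitrary bounded complex of finitely generated projective $R_0$-modules and $D \tensor_{R_0} \R\nov{t}$ is then a bounded complex of projective $\R\nov{t}$-modules with no reason to be acyclic. The missing step is precisely the algebraic mapping torus and the \textsc{Mather} trick (which you mention, but place in the other direction). One encodes the $R$-module structure of $C$ via the shift operator $\chi_1$ (built from a partition of unity of type $(-1,1)$), passes from $C$ to the mapping cone of $\id^* - \alpha^* \chi_1 \beta^*$ on $D \tensor_{R_0} R$ (the algebraic mapping torus), shows this is chain equivalent to $C$, and \emph{then} applies your geometric-series argument: over $\R\nov{t}$ the map $\id^* - \alpha^* \chi_1 \beta^*$ becomes an isomorphism with inverse $\sum_{k \geq 0} (\alpha^* \chi_1 \beta^*)^k$, which makes the torus contractible. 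The geometric series contracts a specific map, not $D \tensor_{R_0} \R\nov{t}$ itself.

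For ``acyclicity implies finite domination'': your short exact sequence $0 \to \R[t,t\inv] \to \R\nov{t} \oplus \R\nov{t\inv} \to Q \to 0$ is correct if $Q$ is taken to be $\R\nov{t} + \R\nov{t\inv}$ inside formal bi-infinite sums, but the reduction stalls there: $Q$ is a large $R_0$-bimodule with no finiteness properties, and showing $C \tensor_{R} Q$ is $R_0$-finitely dominated is not easier than the original problem. The argument that actually closes the gap, and the one the present paper uses (Part~\ref{part:contr-textscn-homol}, adapted from dimension~$2$ down to dimension~$1$), is to extend $C$ to a quasi-coherent diagram $\cY$ indexed by the faces of the interval $[-1,1]$ (the line-bundle diagrams $D(k)$ of Proposition~\ref{formingasheaf}), observe that the global sections $\lim \cY = \cY^{v_-} \times_{\cY^S} \cY^{v_+}$ form a bounded complex of finitely generated projective $R_0$-modules (strong grading ensures each $R_k$ is f.g.\ projective, cf.\ Corollary~\ref{cor:components_fgp}), and use the flag resolution of each $A_F$ (the analogue of Lemma~\ref{lem:xiF_is_qi}), for instance the split-exact sequence $0 \to \R[t] \to \R\powers{t} \oplus R \to \R\nov{t} \to 0$, to split off $C$ as a homotopy retract of $\tot\,\Gamma(\cY)$. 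The \textsc{Novikov} acyclicity hypothesis is used to kill the $\R\nov{t^{\pm1}}$-summands in this splitting, not merely in a two-term \textsc{Mayer}--\textsc{Vietoris} over $C$ itself.
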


The notion of a {\it strongly graded ring\/} will be discussed in
detail below. In the first instance, $R = \bigoplus_{k \in \bZ} R_{k}$
is a $\bZ$-graded ring. The (usual) \textrm{Laurent} polynomial ring
$R[t,t\inv]$ has a $\bZ$-graded subring $\R[t,t\inv]$ with $k$th
component the set of monomials $r_{k} t^{k}$ with $r_{k} \in R_{k}$.
In fact, we may identify $\R [t,t\inv]$ with $R$ itself. In a similar
spirit, the \textsc{Novikov} rings $R \nov{t\inv}$ and $R \nov{t}$
have subrings $\R \nov{t\inv}$ and $\R \nov{t}$ determined by the
condition that the coefficient of $t^{k}$ be an element of~$R_{k}$,
for any $k \in \bZ$. It may be worth pointing out that these subrings
do not contain the indeterminate~$t$, which should be considered a
purely notational device.

\section{The main theorem}

In the present paper we take the step to strongly $\bZ^2$-graded
rings, combining ideas from both of the aforementioned publications
\cite{TWOV} and~\cite{ZGR}. Roughly speaking, a bounded chain
complex~$C$ of finitely generated free modules over a strongly
$\bZ^2$\nbd-graded ring is finitely dominated over the degree-$0$
subring if and only if certain eight complexes induced from~$C$ are
acyclic. Indeed, from a $\bZ^{2}$-graded ring
$R = \bigoplus_{\sigma \in \bZ^{2}} R_{\sigma}$ we construct the
following eight \textsc{\textsc{Novikov}}-type rings:

\begin{equation}
  \label{eq:Novikov_type}
  \left.
  \begin{aligned}
    \R[x,x\inv] \nov{y}     & = \bigcup_{n \geq 0} \prod_{y \geq -n}
                          \bigoplus_{x \in \bZ} R_{(x,y)}                         \\
    \R[x,x\inv] \nov{y\inv} & = \bigcup_{n \geq 0} \prod_{y \geq -n}
                              \bigoplus_{x \in \bZ} R_{(x,-y)}                    \\
    \R[y,y\inv] \nov{x}     & = \bigcup_{n \geq 0} \prod_{x \geq -n}
                              \bigoplus_{y \in \bZ} R_{(x,y)}                     \\
    \R[y,y\inv] \nov{x\inv} & = \bigcup_{n \geq 0} \prod_{x \geq -n}
                              \bigoplus_{y \in \bZ} R_{(-x,y)}                    \\
    \R \nov{x,y}            & = \bigcup_{n \geq 0} \prod_{x,y \geq -n} R_{(x,y)}
                                                                                  \\
    \R \nov {x, y\inv}      & = \bigcup_{n \geq 0} \prod_{x,y \geq -n} R_{(x,-y)} \\
    \R \nov{x\inv,y\inv}    & = \bigcup_{n \geq 0} \prod_{x,y \geq -n}
                           R_{(-x,-y)}                                            \\
    \R \nov {x\inv, y}      & = \bigcup_{n \geq 0} \prod_{x,y \geq -n} R_{(-x,y)} \\
  \end{aligned}
  \qquad \right\}
\end{equation}

\noindent Similar to notation used earlier, the symbols~$x$ and~$y$ do
{\it not\/} stand for actual indeterminates; they are purely
notational devices, emphasising a formal similarity with
\textsc{Laurent} polynomial rings and their associated
\textsc{\textsc{Novikov}} rings. The ring \(\R[x,x\inv]\nov{y}\) is
the subring of \(R[x,x\inv]\nov{y}\) with elements
\(\sum_{(a,b) \in \bZ^2} r_{a,b} x^a y^b\) such that
\(r_{a,b} \in R_{(a,b)}\), and similar for the other seven cases. ---
With this notational camouflage we obtain a perfect analogue of
Theorem~\ref{thm:old_two}:
 
\begin{theorem}
  \label{thm:main}
  Let $R=\bigoplus_{k\in\bZ^2}R_k$ be a strongly $\bZ^2$-graded ring,
  and let $C$ be a bounded chain complex of finitely generated free
  $R$-modules.  The complex $C$ is $R_{(0,0)}$-finitely dominated if
  and only if all of the eight complexes
\begin{subequations}
      \begin{gather}\left.
          \begin{aligned}
            \label{eq:cond_edge_new}
               & C \tensor_{R} R_{*}[x,\, x\inv]\nov{y}        & \quad & C \tensor_{R}
            R_{*}[x,\,
            x\inv]\nov{y\inv}                         \\[1.5ex]
               & C \tensor_{R} R_{*}[y,\, y\inv]\nov{x}        &       & C \tensor_{R}
            R_{*}[y,\, y\inv]\nov{x\inv}
          \end{aligned}\quad \right\}
                                                      \\
        \intertext{and}
        \left.
          \begin{aligned}
            \label{eq:cond_vertex_new}
            \, & C \tensor_{R} R_{*}\nov{x,\, y} \hskip 2.0 em & \quad & C \tensor_{R}
            R_{*}\nov{x\inv,\, y\inv} \hskip 1.15 em  \\[1.5ex]
               & C \tensor_{R} R_{*}\nov{x,\, y\inv}           &       & C \tensor_{R}
            R_{*}\nov{x\inv,\, y} \ ,
          \end{aligned}\quad \right\}
      \end{gather}
    \end{subequations}
    have vanishing homology in all degrees.
\end{theorem}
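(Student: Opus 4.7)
The plan is to prove Theorem~\ref{thm:main} by combining the strategies of \cite{TWOV} and~\cite{ZGR}, namely by iterating Theorem~\ref{thm:new_one} along each of the two coordinate directions of~$\bZ^2$. To this end I introduce the two intermediate strongly $\bZ$-graded subrings $S_h = \bigoplus_{m \in \bZ} R_{(m,0)}$ and $S_v = \bigoplus_{n \in \bZ} R_{(0,n)}$. The ring~$R$ is itself strongly $\bZ$-graded over~$S_h$, with $k$th graded component $\bigoplus_{m \in \bZ} R_{(m,k)}$, and analogously strongly $\bZ$-graded over~$S_v$; in turn, each of $S_h$ and $S_v$ is strongly $\bZ$-graded with $R_{(0,0)}$ as its degree-zero subring. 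This ``two axes'' setup is what will allow two successive applications of Theorem~\ref{thm:new_one}.

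For the forward direction (``$R_{(0,0)}$-finite domination implies acyclicity of all eight complexes'') I would adapt the corresponding direction of Theorem~\ref{thm:new_one}. The general idea is that finite domination of~$C$ over~$R_{(0,0)}$ gives a chain-level description of~$C$ which, after tensoring over~$R$ with any of the eight Novikov-type rings in~\eqref{eq:Novikov_type}, admits an explicit contracting homotopy: the defining feature of a Novikov ring is precisely that it contains the formal Neumann series required to invert the self-maps encoding finite domination. The four edge conditions should follow from a single application of Theorem~\ref{thm:new_one} along the relevant axis, and the four vertex conditions from compounding such contractions in both the $x$- and $y$-directions.

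For the harder reverse direction I would proceed by a two-stage reduction. First, by Theorem~\ref{thm:new_one} applied to the pair $(R,S_h)$, acyclicity of the two horizontal-edge complexes in~\eqref{eq:cond_edge_new} is equivalent to $C$ being $S_h$-finitely dominated; replace $C$, as an $S_h$-complex, by a chain-homotopy-equivalent bounded complex~$D$ of finitely generated projective $S_h$-modules. A second application of Theorem~\ref{thm:new_one}, now to~$D$ viewed over the strongly $\bZ$-graded ring~$S_h$ with degree-zero subring~$R_{(0,0)}$, reduces $R_{(0,0)}$-finite domination of~$C$ to acyclicity of $D \tensor_{S_h} S_h\nov{x}$ and $D \tensor_{S_h} S_h\nov{x\inv}$, which in turn is equivalent to acyclicity of the same two complexes with~$C$ in place of~$D$.

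The main obstacle, and the step that requires genuinely new work, is bridging between these intermediate single-variable acyclicities over~$S_h$ and the hypotheses actually available, which are stated in terms of tensor products over~$R$ with the vertex and vertical-edge rings. I expect the key technical tool to be a Mayer--Vietoris cover on~$\bZ^2$, realised as one (or a short chain of) short exact sequences of $R$-bimodules relating the $R$-extended ring $R \tensor_{S_h} S_h\nov{x}$ to an appropriate combination of the two vertex rings $R_{*}\nov{x,y^{\pm 1}}$ and the vertical-edge ring $R_{*}[y,y\inv]\nov{x}$, with a mirror argument on the $x\inv$-side. Flatness of~$R$ over~$S_h$, itself a consequence of the strong grading, lets one identify $C \tensor_{S_h} S_h\nov{x}$ with $C \tensor_R (R \tensor_{S_h} S_h\nov{x})$; the induced long exact sequences in homology then produce the desired acyclicity from the remaining six hypotheses. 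Carrying out this combinatorial cover and verifying that the bimodule maps behave as expected is the graded analogue of the two-variable cover argument of \textsc{H\"uttemann} and \textsc{Quinn} in~\cite{TWOV}, and will be the technical crux of the proof.
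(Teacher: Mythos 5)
Your overall framing — iterate Theorem~\ref{thm:new_one} along the two coordinate axes \emph{via} the subrings $S_h$ and $S_v$ — is genuinely different from the route the paper takes, and the two stages you set up do hold up: $R$ is strongly $\bZ$-graded over $S_h$ with degree-zero part $S_h$, so an application of Theorem~\ref{thm:new_one} to $(R,S_h)$ does identify $S_h$-finite domination of $C$ with acyclicity of $C \tensor_R \R[x,x\inv]\nov{y^{\pm 1}}$; and a second application to $(S_h, R_{(0,0)})$ does convert $R_{(0,0)}$-finite domination into acyclicity of $D \tensor_{S_h} (S_h)_*\nov{x^{\pm 1}}$. (The free-vs-projective mismatch is harmless, since a bounded complex of finitely generated projectives is chain equivalent to a bounded complex of finitely generated frees after adding elementary complexes $P \rTo^{\id} P$.)

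The genuine gap is exactly where you locate it, but it is larger than you suggest. Identifying $C \tensor_{S_h} (S_h)_*\nov{x}$ with $C \tensor_R \big(R \tensor_{S_h} (S_h)_*\nov{x}\big)$ is fine, and the latter ring is indeed (granting a graded analogue of Proposition~\ref{noviso}, which itself needs proof) the intersection of the two vertex rings $\R\nov{x,y}$ and $\R\nov{x,y\inv}$ inside the ambient module of formal series. But the short exact sequence you expect does not materialise: the cokernel of the diagonal inclusion $R \tensor_{S_h} (S_h)_*\nov{x} \rTo \R\nov{x,y} \oplus \R\nov{x,y\inv}$ is \emph{not} the edge ring $\R[y,y\inv]\nov{x}$. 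Indeed, $\R\nov{x,y}$ is not even a subring of $\R[y,y\inv]\nov{x}$ --- already in the \textsc{Laurent} case $\sum_{n \geq 0} y^n$ lies in $K\nov{x,y}$ but has infinite $y$-support at a fixed $x$-degree, so it is not in $K[y,y\inv]\nov{x}$. Thus no single three-term Mayer--Vietoris with entries drawn from the eight given rings proves acyclicity of the first term, and it is unclear that any short chain of such sequences would do so without reproducing the full \textsc{\v Cech}-complex machinery of Part~\ref{part:contr-textscn-homol}. That machinery is not a two-set cover but a resolution over a nine-element face poset, realised through the diagrams $E_F$ of Figure~\ref{fig:Evbl} and Proposition~\ref{prop:xi_qi}, precisely because a simple cover is insufficient. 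Your ``only if'' direction is also under-specified: the ``Neumann series'' heuristic is the right intuition, but the paper needs the algebraic torus $\torb{\alpha}{\beta}{H}$ and the \textsc{Mather} trick (Theorem~\ref{thm:Mather-trick}) to produce a complex over which the geometric-series contraction can actually be written down and shown to commute with the differentials in two directions simultaneously; this is more than a direct adaptation of the one-variable argument.
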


To illustrate the extent of the generalisation we describe a strongly
$\bZ^2$-graded ring~\(\hat{K}\) that is not a crossed product (and in
particular not a \textsc{Laurent} polynomial ring), and a complex of
finitely generated free \(K\)-modules that is
\(\hat{K}_{(0,0)}\)-finitely dominated by the above criterion.

Let \(\overline K = K[a, b, c, d] / ab+cd-1\); this is a strongly
$\bZ$-graded ring if we let $a, c$ have degree~$1$ and $b, d$ have
degree~$-1$. It is not a \textsc{Laurent} polynomial ring, in fact not
a crossed product, since the only units in $\overline{K}$ are the
elements of $ K^\times$ in degree~\(0\), as can be shown using ideas
from \textsc{Gr\"obner} basis theory. In particular, there is no unit
in~\({\overline K}_{1}\).

The \(\bZ^2\)-graded ring
$\hat K = \overline K \tensor_{{\overline K}_0} \overline K$, where
$\hat K_{(k,\ell)} = {\overline K}_k \tensor_{{\overline K}_0}
{\overline K}_\ell$, can be seen to be strongly graded as the
necessary partitions of unity are present as in point $(3)$ of
Proposition~\ref{prop:characterisation_strongly_graded}. It is not a
crossed product since, for example, there is no unit in
\(\hat{K}_{(1,0)} = {\overline K}_{1}\).

We will define a four-fold chain complex~\(V\) concentrated in degrees
\((\epsilon_1, \epsilon_2, \epsilon_3, \epsilon_4) \in \bZ^4\) with
\(\epsilon_j = 0,1\). The entries are \(\hat{K}\), a free
\(\hat{K}\)-module of rank~1; the non-trivial differentials are
multiplication by
\begin{align*}
  & 1 - a \tensor 1   & \text{ in 1-direction,} \\
  & 1 - 1 \tensor c   & \text{ in 2-direction,} \\
  & 1 - d \tensor 1   & \text{ in 3-direction,} \\
  & 1 - 1 \tensor bcd & \text{ in 4-direction.}
\end{align*}
That is, the map from position
\((1,\epsilon_2, \epsilon_3, \epsilon_4)\) to position
\((0,\epsilon_2, \epsilon_3, \epsilon_4)\) is multiplication
by \(1 - a \tensor 1\), the map from position
\((\epsilon_1, 1, \epsilon_3, \epsilon_4)\) to position
\((\epsilon_1, 0, \epsilon_3, \epsilon_4)\) is multiplication
by \(1 - 1 \tensor c\), and so on.

The totalisation~\(C\) of~\(V\) can be obtained by taking ``iterated
mapping cones'', and up to isomorphism it does not matter in which
order we choose the different directions. Now the map
\(1 - a \tensor 1\) is an isomorphism with inverse
\begin{equation*}
  (1 - a \tensor 1)\inv = 1 + a \tensor 1 + a^2 \tensor 1 + a^3
  \tensor 1 + \ldots
\end{equation*}
over the rings \(\hat{K}_* [y,y\inv]\nov{x}\) and
\(\hat{K}_* \nov{x, y^{\pm 1}}\), by the usual telescoping sum
argument familiar from the geometric series. Note that the series
defines an element in each of these rings since
\(a^k \tensor 1 = (a \tensor 1)^k\) has degree~\((k,0)\). Thus after
tensoring \(V\) with one of these rings the mapping cones in
1\nbd-direction will be acyclic, hence the tensor product of~\(C\)
with one of these rings will be acyclic.

Similarly, the map \(1 - 1 \tensor bcd \) is an isomorphism over
\(\hat{K}_* [x, x\inv]\nov{y\inv}\) and
\(\hat{K}_* \nov{x^{\pm 1}, y\inv}\), with inverse
\begin{equation*}
  (1 - 1 \tensor bcd)\inv = 1 + 1 \tensor bcd + 1 \tensor (bcd)^2 + 1
  \tensor (bcd)^3 + \ldots \ ;
\end{equation*}
this is an element of the rings in question since the degree of
\(1 \tensor (bcd)^k\) is~\((0,-k)\). Consequently, after tensoring
\(V\) with one of these rings the mapping cones in 4\nbd-direction
will be acyclic, hence the tensor product of~\(C\) with one of these
rings will be acyclic. The cases of \(1 - 1 \tensor c\) and
\(1 - d \tensor 1\) are similar, involving the rings
\(\hat{K}_* [x, x\inv]\nov{y}\) and \(\hat{K}_* \nov{x^{\pm 1}, y}\)
in the former case, the rings \(\hat{K}_* [y, y\inv]\nov{x\inv}\)
and \(\hat{K}_* \nov{x\inv, y^{\pm 1}}\) in the latter.

\subsection*{Structure of the paper}

The paper can be seen as an amalgamation of the two publications
\cite{TWOV} and~\cite{ZGR}, combining the graded viewpoint of the
latter with the elaborate homological algebra of the former. Replacing
central indeterminates by inherently non-commutative structures is a
non-trivial task, and it is rather surprising that with the right
set-up the overall pattern of proof remains virtually unchanged.

We start with recalling basic concepts from the theory of strongly
graded rings. The ``if'' implication of the main theorem is verified
in Part~\ref{part:contr-textscn-homol}, building on \cite{ZGR} and
\cite{TWOV}; the main point is to relate the given chain complex with
a complex of diagrams which are the analogues of well-known line
bundles on the scheme \(\mathbb{P}^1 \times
\mathbb{P}^1\). Part~\ref{part:finite-domin-impl} focuses on the
``only if'' implication. The main technical device is the algebraic
torus (a ``two-dimensional'' version of the algebraic mapping torus),
and the \textsc{Mather} trick which allows to replace the
complex~\(C\) with an algebraic torus of a complex~\(D\) consisting of
finitely generated projective \(R_{(0,0)}\)-modules.

\section{Graded rings and \textsc{Novikov} rings}

\begin{definition}
  A \emph{$\bZ^2$\nbd-graded ring} is a (unital) ring~$L$ equipped
  with a direct sum decomposition into additive subgroups
  $L=\bigoplus_{k\in{\bZ}^2} L_k$ such that
  $L_kL_\ell\subseteq L_{k+\ell}$ for all $ k,\ell\in{\bZ}^2$, where
  $L_kL_\ell$ consists of the finite sums of ring products $xy$ with
  $x \in L_k$ and $y \in L_\ell$.  The summands~$L_{k}$ are called the
  {\it (homogeneous) components\/} of~$L$; elements of~$L_{k}$ are
  called {\it homogeneous of degree~$k$}.
\end{definition}

Relevant examples are the polynomial ring $K[x,y]$ and the
\textsc{Laurent} polynomial ring $R = R_{0}[x,x\inv,y,y\inv]$ in two
variables, equipped with the usual $\bZ^{2}$-grading by the exponents
of~$x$ and~$y$, respectively; here $K$ may be any unital ring.

\medbreak\goodbreak

From a unital ring~$R$, for the moment not equipped with a grading, we
can construct various ``\textsc{\textsc{Novikov}} rings'' of formal
\textsc{Laurent} series as follows:
\begin{align*}
  R[x,x\inv] \nov{y}     & = R[x,x\inv] \powers{y} [y\inv]       \\
  R[y,y\inv] \nov{x}     & = R[y,y\inv] \powers{x} [x\inv]       \\
  R[x,x\inv] \nov{y\inv} & = R[x,x\inv] \powers{y\inv} [y]       \\
  R[y,y\inv] \nov{x\inv} & = R[y,y\inv] \powers{x\inv} [x]       \\
  R \nov{x,y}            & = R \powers{x,y} [(xy)\inv]           \\
  R \nov{x\inv,y\inv}    & = R \powers{x\inv, y\inv} [xy]        \\
  R \nov {x, y\inv}      & = R \powers{x, y\inv} [(x y\inv)\inv] \\
  R \nov {x\inv, y}      & = R \powers{x\inv, y} [(x\inv y)\inv]
\end{align*}
Here $x$ and~$y$ are central indeterminates which commute with each
other, and with the elements of~$R$. The slightly informal notation
$S \powers{x} [x\inv]$ is meant to denote the ring of formal power
series in the indeterminate~$x$ with coefficients in~\(S\), localised
at the multiplicative system $\{x^k \,|\, k \geq 0\}$.

Returning to the case of a $\bZ^{2}$-graded ring~$R$, we can introduce
subrings of rings of formal power or \textsc{Laurent} series, denoted
by adding the decoration ``$*$'' as a subscript to the ring~$R$,
consisting of those elements such that the coefficient of
$x^{s} y^{t}$ is an element of~$R_{(s,t)}$. The resulting unital
subrings will not contain the elements $x$ and~$y$ as, for example,
$x = 1 \cdot x$ has coefficient $1 \in R_{0}$, whereas the
monomial~$x$ has degree $(1,0)$. At this point we may treat the
``variables'' as a purely notational device keeping track of degrees
of various elements. As an illustration we put forward the ring
$\R[x,x\inv,y,y\inv]$ which has elements the finite sums of
terms $r_{s,t} x^{s} y^{t}$, with $s,t \in \bZ$ and
$r_{s,t} \in R_{(s,t)}$. We may in fact identify this ring with the
(graded) ring~$R$ itself. --- When applied to the \textsc{\textsc{Novikov}}
rings listed above, we arrive at the identifications listed
in~\eqref{eq:Novikov_type}.

\section{Partitions of unity and strongly graded rings}

\begin{definition}
  Let $S = \bigoplus_{\sigma \in \bZ^{2}} S_{\sigma}$ be a
  $\bZ^2$-graded unital ring. For $\rho\in\bZ^2$, an finite sum
  expression of the form $1=\sum_{j} {u_j}{v_j}$ with
  $u_j\in S_{\rho}$ and $v_j\in S_{-\rho}$ is called a \emph{partition
    of unity of type $(\rho,-\rho)$}.
\end{definition}

By a straightforward computation one can show:

\begin{lemma}
  \label{lem:new_pou_from_old}
  For $\rho, \rho' \in \bZ^2$ let $1=\sum_{j} u_jv_j$ and
  $1=\sum_{k} u'_kv'_k$ be partitions of unity of type~$\rho$
  and~$\rho'$, respectively. Then
  \begin{equation*}
    1=\sum_{k} \sum_{j} (u_j u'_j) (v'_j v_j)
  \end{equation*}
  is a partition of unity of type $\rho + \rho'$. \qed
\end{lemma}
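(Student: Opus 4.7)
The proof plan is a direct bilinear manipulation; the only subtlety is reading the displayed formula in the statement correctly. I would first note that, by the grading hypothesis and the fact that $u_j \in S_\rho$, $u'_k \in S_{\rho'}$, $v'_k \in S_{-\rho'}$, $v_j \in S_{-\rho}$, one has
\begin{equation*}
  u_j u'_k \in S_\rho S_{\rho'} \subseteq S_{\rho+\rho'} \qquad \text{and} \qquad v'_k v_j \in S_{-\rho'} S_{-\rho} \subseteq S_{-(\rho+\rho')}\ .
\end{equation*}
Thus every summand $(u_j u'_k)(v'_k v_j)$ has the correct shape to be a term of a partition of unity of type $\rho+\rho'$. (I read the indices $u'_j$, $v'_j$ appearing inside the double sum in the statement as typographical slips for $u'_k$, $v'_k$; otherwise the second partition plays no role.)

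The identity itself is then immediate from associativity in~$S$ together with the two given partitions, applied one after the other:
\begin{equation*}
  \sum_{k}\sum_{j} (u_j u'_k)(v'_k v_j) = \sum_{j} u_j \Bigl( \sum_{k} u'_k v'_k \Bigr) v_j = \sum_{j} u_j \cdot 1 \cdot v_j = \sum_{j} u_j v_j = 1\ .
\end{equation*}

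I do not expect a genuine obstacle: the lemma is nothing more than a closure property expressing that the set of elements of $\bZ^2$ which admit a partition of unity is closed under addition, and the verification needs only the defining inclusions $S_\alpha S_\beta \subseteq S_{\alpha+\beta}$ and the two hypothesised partitions. The substance of the statement lies in its intended use further on, namely to propagate partitions of unity from a small generating set of degrees (together with the evident negation symmetry) to all of $\bZ^2$, which is precisely the point where the strong-grading condition becomes global rather than degreewise.
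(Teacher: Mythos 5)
Your proposal is correct and is precisely the ``straightforward computation'' the paper alludes to but does not write out (the lemma is stated with a terminal \(\square\) and no proof). You are also right that the double sum in the statement contains an index slip: the inner factors should read \(u'_k\), \(v'_k\), as you have corrected, and your two-step check (degree bookkeeping via \(S_\alpha S_\beta \subseteq S_{\alpha+\beta}\), then collapsing the inner sum by associativity) is exactly the intended argument.
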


If $S_{\rho} S_{-\rho} = S_{(0,0)}$ then, since $1 \in S_{(0,0)}$, there
exists a partition of unity of type~$(\rho,-\rho)$. Conversely, if a
partition of unity of type~$(\rho,-\rho)$ exists then
$1 \in S_{\rho} S_{-\rho}$ and thus
$S_{(0,0)} \subseteq S_{\rho} S_{-\rho}$ so that
$S_{(0,0)} = S_{\rho} S_{-\rho}$; this implies moreover
\begin{equation*}
  S_{\sigma} = S_{\sigma} S_{(0,0)} = S_{\sigma} S_{\rho} S_{-\rho}
\subseteq S_{\sigma+\rho} S_{-\rho} \subseteq S_{\sigma}
\end{equation*}
so that $S_{\sigma} = S_{\sigma+\rho} S_{-\rho}$ for any $\sigma \in
\bZ^{2}$, and similarly $S_{\sigma} = S_{\rho} S_{\sigma - \rho}$.

\begin{proposition}
  \label{prop:pi_mu}
  Let $R$ be a $\bZ^{2}$\nbd-graded ring.  Let
  $\lambda, \rho \in \bZ^{2}$ be such that there exists a partition of
  unity $1 = \sum_{j} \alpha_{j} \beta_{j}$ of
  type~$(\lambda, -\lambda)$. The multiplication map
  \begin{equation*}
    \pi_{\lambda,\rho} \colon R_{\lambda} \tensor_{R_{0}} R_{\rho}
    \rTo R_{\lambda + \rho} \ , \quad x \tensor y \mapsto xy
  \end{equation*}
  is an isomorphism of $R_{(0,0)}$\nbd-bimodules; its inverse can be
  written as
  \begin{equation*}
    \mu_{\lambda, \rho} = \pi_{\lambda,\rho}\inv \colon
    R_{\lambda+\rho} \rTo R_{\lambda} \tensor_{R_{(0,0)}} R_{\rho} \ ,
    \quad z \mapsto \sum_{j} \alpha_{j} \tensor \beta_{j} z \ .
  \end{equation*}
  The map $\mu_{\lambda, \rho}$ does not depend on the choice of
  partition of unity.
\end{proposition}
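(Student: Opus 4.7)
The plan is to verify directly that the proposed formula for $\mu_{\lambda,\rho}$ yields a two-sided inverse of $\pi_{\lambda,\rho}$, and then to show the resulting inverse is intrinsic by a similar calculation. First I would record that $\pi_{\lambda,\rho}$ is well-defined as a map of $R_{(0,0)}$-bimodules: the ring multiplication $R_{\lambda} \times R_{\rho} \rTo R_{\lambda+\rho}$ is $R_{(0,0)}$-balanced and so descends to the tensor product, and the left and right actions of $R_{(0,0)}$ on $R_{\lambda}$ and $R_{\rho}$ correspond to multiplication on the appropriate side of $\pi_{\lambda,\rho}(x \tensor y) = xy$. The easy direction $\pi_{\lambda,\rho} \circ \mu_{\lambda,\rho} = \id$ then falls out immediately from the identity $\sum_j \alpha_j \beta_j = 1$.

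The substantive step is the opposite composition. Evaluating on a pure tensor $x \tensor y$ with $x \in R_{\lambda}$ and $y \in R_{\rho}$, the key observation is that each element $\beta_j x$ lies in $R_{-\lambda} R_{\lambda} \subseteq R_{(0,0)}$, so it may be shuttled across the tensor symbol. Thus
\[
  \mu_{\lambda,\rho}\bigl(\pi_{\lambda,\rho}(x \tensor y)\bigr)
  = \sum_j \alpha_j \tensor \beta_j x y
  = \sum_j \alpha_j \beta_j x \tensor y
  = x \tensor y,
\]
using $\sum_j \alpha_j \beta_j = 1$ at the last step. This is essentially the whole content of the proof: it uses in an essential way both the existence of the partition of unity and the fact that the tensor product is taken over $R_{(0,0)}$ rather than over $\bZ$.

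For independence, I would bring in a second partition $1 = \sum_k \alpha'_k \beta'_k$ of type $(\lambda, -\lambda)$, insert the identity $\sum_j \alpha_j \beta_j = 1$ into the expression defining $\mu_{\lambda,\rho}$ via the alternate partition, and reorganise:
\[
  \sum_k \alpha'_k \tensor \beta'_k z
  = \sum_{k,j} \alpha'_k \tensor \beta'_k \alpha_j \beta_j z
  = \sum_{k,j} \alpha'_k \beta'_k \alpha_j \tensor \beta_j z
  = \sum_j \alpha_j \tensor \beta_j z,
\]
where the middle equality again exploits that $\beta'_k \alpha_j \in R_{-\lambda} R_{\lambda} \subseteq R_{(0,0)}$. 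No genuine obstacle arises here; the entire argument is essentially the ``slide across the tensor'' trick applied twice, and its correctness is purely a matter of tracking degrees through the grading.
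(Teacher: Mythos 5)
Your proof is correct and follows essentially the same route as the paper's: verify $\pi_{\lambda,\rho}$ is a well-defined bimodule map, check $\pi_{\lambda,\rho}\circ\mu_{\lambda,\rho}=\id$ directly from the partition of unity, and check $\mu_{\lambda,\rho}\circ\pi_{\lambda,\rho}=\id$ by sliding $\beta_j x \in R_{(0,0)}$ across the tensor. The only slight divergence is the final claim: the paper disposes of the independence of $\mu_{\lambda,\rho}$ on the chosen partition implicitly, since once $\mu_{\lambda,\rho}$ is identified as the (necessarily unique) inverse of a map $\pi_{\lambda,\rho}$ that makes no reference to the partition, independence is automatic; you instead give a direct computation with two partitions, which is also correct and arguably more explicit, at the modest cost of repeating the slide-across-tensor argument once more.
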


\begin{proof}
  The map $\pi_{\lambda,\rho}$ is an $R_{(0,0)}$\nbd-balanced, thus
  well defined, \(R_{(0,0)}\)\nbd-bi\-module homomorphism. Hence it is
  enough to show that $\mu_{\lambda,\rho}$, considered as a
  homomorphism of right $R_{(0,0)}$\nbd-modules, is its inverse. For
  $z \in R_{\lambda+\rho}$ we calculate
  \begin{equation*}
    \pi_{\lambda,\rho} \circ \mu_{\lambda,\rho} (z) = \sum_{j}
    \alpha_{j} \beta_{j} z = z
  \end{equation*}
  so that $\pi_{\lambda,\rho} \circ \mu_{\lambda,\rho}$ is the
  identity map. Similarly, for $x \in R_{\lambda}$ and $y \in
  R_{\rho}$ we have
  \begin{equation*}
    \mu_{\lambda,\rho} \circ \pi_{\lambda,\rho} (x \tensor y) =
    \sum_{j} \alpha_{j} \tensor (\beta_{j} x) y = \sum_{j} \alpha_{j}
    \beta_{j} x \tensor y = x \tensor y
  \end{equation*}
  (since $\beta_{j} x \in R_{(0,0)}$) so that
  $\mu_{\lambda,\rho} \circ \pi_{\lambda,\rho}$ is the identity map.
\end{proof}

\begin{definition}[{\textsc{Dade} \cite[\S1]{GRD}}]
  A $\bZ^2$\nbd-graded ring
  $L = \bigoplus_{\rho \in \bZ^{2}} L_{\rho}$ is called a {\it
    strongly ${\bZ}^2$-graded ring} if
  $L_{\kappa} L_{\lambda} = L_{\kappa+\lambda}$ for all
  $\kappa, \lambda \in \bZ^{2}$.
\end{definition}

The establishing example is the Laurent polynomial ring in two
variables, $R = R_{0}[x,x\inv,y,y\inv]$. This is a strongly
${\bZ}^2$-graded ring when $x$ and~$y$ are given degrees~$(1,0)$ and
$(0,1)$, respectively.

\medbreak

Using \cite[Proposition~1.6]{GRD} and
Lemma~\ref{lem:new_pou_from_old}, one obtains the following
characterisation of strongly $\bZ^{2}$\nbd-graded rings:

\begin{proposition}
  \label{prop:characterisation_strongly_graded}%
  Let $R$ be a $\bZ^2$-graded ring. The following statements are
  equivalent:
  \begin{enumerate}
  \item \label{item:1} The ring $R$ is strongly graded.
  \item \label{item:2} For every $\rho\in{\bZ}^2$ there is at least
    one partition of unity of type $(\rho,-\rho)$.
  \item \label{item:3} There is at least one partition of unity of
    each of the types
    \begin{itemize}
    \item $\big((1,0),(-1,0)\big)$ and $\big((-1,0),(1,0)\big)$,
    \item $\big((0,1),(0,-1)\big)$ and $\big((0,-1),(0,1)\big)$.\qed
    \end{itemize}
  \item \label{item:4} For every $\rho \in \bZ^{2}$ we have
    $L_{\rho} L_{-\rho} = L_{(0,0)}$.
  \end{enumerate}
\end{proposition}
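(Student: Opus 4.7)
The plan is to verify the cycle of implications \((1) \Rightarrow (4) \Rightarrow (2) \Rightarrow (3) \Rightarrow (2) \Rightarrow (1)\). Three of the steps are essentially formal; the two substantive ones are \((3) \Rightarrow (2)\), which rests on iterating the composition formula of Lemma~\ref{lem:new_pou_from_old}, and \((2) \Rightarrow (1)\), which is precisely the calculation already rehearsed in the discussion preceding Proposition~\ref{prop:pi_mu}.

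First I would dispose of the formal steps. For \((1) \Rightarrow (4)\), specialise the defining equality \(R_{\kappa} R_{\lambda} = R_{\kappa + \lambda}\) to \(\lambda = -\kappa\). For \((4) \Rightarrow (2)\), note that \(1 \in R_{(0,0)} = R_{\rho} R_{-\rho}\) forces an expression of \(1\) as a finite sum of products \(u_{j} v_{j}\) with \(u_{j} \in R_{\rho}\) and \(v_{j} \in R_{-\rho}\), which is a partition of unity of type~\((\rho, -\rho)\). For \((2) \Rightarrow (3)\), simply observe that the four types listed in~(3) are among those covered by~(2).

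The combinatorial core is the implication \((3) \Rightarrow (2)\). Given \(\rho = (a, b) \in \bZ^{2}\), I build a partition of unity of type \((\rho, -\rho)\) in three stages. First, if \(a > 0\), iterate Lemma~\ref{lem:new_pou_from_old} starting from the basic partition of type \(\bigl((1,0), (-1,0)\bigr)\) a total of \(a\) times to obtain a partition of unity of type \(\bigl((a, 0), (-a, 0)\bigr)\); if \(a < 0\), iterate the basic partition of type \(\bigl((-1,0), (1,0)\bigr)\) instead, and if \(a = 0\) use the trivial partition \(1 = 1 \cdot 1\) with both factors lying in~\(R_{(0,0)}\). Second, repeat the procedure in the second coordinate to obtain a partition of type \(\bigl((0, b), (0, -b)\bigr)\). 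Third, apply Lemma~\ref{lem:new_pou_from_old} once more to combine the two partitions, producing one of type \((\rho, -\rho)\). The hardest part of this whole proposition is the bookkeeping of indices across these iterations, but each individual application of the lemma is routine.

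For the final step \((2) \Rightarrow (1)\), fix \(\kappa, \lambda \in \bZ^{2}\). The inclusion \(R_{\kappa} R_{\lambda} \subseteq R_{\kappa + \lambda}\) is built into the definition of a graded ring. For the converse, use~(2) to choose a partition of unity \(1 = \sum_{j} \alpha_{j} \beta_{j}\) of type \((\kappa, -\kappa)\) and, for any \(z \in R_{\kappa + \lambda}\), write
\begin{equation*}
  z \;=\; 1 \cdot z \;=\; \sum_{j} \alpha_{j} (\beta_{j} z) \ ;
\end{equation*}
since \(\alpha_{j} \in R_{\kappa}\) and \(\beta_{j} z \in R_{-\kappa} R_{\kappa + \lambda} \subseteq R_{\lambda}\), this exhibits \(z\) as an element of \(R_{\kappa} R_{\lambda}\). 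Hence \(R_{\kappa} R_{\lambda} = R_{\kappa + \lambda}\) for all \(\kappa, \lambda \in \bZ^{2}\), closing the cycle.
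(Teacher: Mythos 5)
Your proof is correct and matches the approach the paper indicates: the paper does not write out the argument, merely noting that the equivalences follow from Dade's Proposition~1.6 together with Lemma~\ref{lem:new_pou_from_old}, and the calculation you use for $(2)\Rightarrow(1)$ is exactly the one already rehearsed in the text preceding Proposition~\ref{prop:pi_mu}. Your version is simply a self-contained spelling-out of what the paper leaves to a citation and a \qed.
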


\begin{corollary}
  \label{cor:components_fgp}
  If $L = \bigoplus_{\kappa \in \bZ^2} L_{\kappa}$ is strongly graded,
  then all $L_{\kappa}$ are invertible $L_{(0,0)}$\nbd-bimodules. In
  particular, the $L_{\kappa}$ are finitely generated projective as
  right and as left $L_{(0,0)}$\nbd-modules. \qed
\end{corollary}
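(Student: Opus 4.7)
The plan is to leverage the partition-of-unity machinery already developed in Proposition~\ref{prop:pi_mu} and Proposition~\ref{prop:characterisation_strongly_graded}. Since $L$ is strongly graded, condition~\eqref{item:2} of Proposition~\ref{prop:characterisation_strongly_graded} supplies, for every $\kappa \in \bZ^{2}$, partitions of unity of type $(\kappa,-\kappa)$ and of type $(-\kappa,\kappa)$. Applying Proposition~\ref{prop:pi_mu} with $(\lambda,\rho) = (\kappa,-\kappa)$ and with $(\lambda,\rho) = (-\kappa,\kappa)$ then immediately yields $L_{(0,0)}$-bimodule isomorphisms
\begin{equation*}
  L_{\kappa} \tensor_{L_{(0,0)}} L_{-\kappa} \ism L_{(0,0)} \qquad \text{and} \qquad L_{-\kappa} \tensor_{L_{(0,0)}} L_{\kappa} \ism L_{(0,0)} \ ,
\end{equation*}
which is exactly the condition that $L_{\kappa}$ be an invertible $L_{(0,0)}$-bimodule with inverse~$L_{-\kappa}$.

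For the ``in particular'' clause I would spell out an explicit dual basis rather than quote the general fact that invertible bimodules are finitely generated projective on both sides, since the partition of unity provides such a dual basis directly. Fix a partition of unity $1 = \sum_{j} \alpha_{j} \beta_{j}$ of type $(\kappa,-\kappa)$ with $\alpha_{j} \in L_{\kappa}$ and $\beta_{j} \in L_{-\kappa}$. For any $x \in L_{\kappa}$ we have
\begin{equation*}
  x \;=\; 1 \cdot x \;=\; \sum_{j} \alpha_{j} (\beta_{j} x) \ ,
\end{equation*}
and each $\beta_{j} x$ lies in $L_{(0,0)}$ because $\beta_{j} \in L_{-\kappa}$ and $x \in L_{\kappa}$. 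The maps $f_{j} \colon L_{\kappa} \to L_{(0,0)}$, $x \mapsto \beta_{j} x$, are right $L_{(0,0)}$-module homomorphisms, so $\{(\alpha_{j}, f_{j})\}_{j}$ is a finite dual basis for $L_{\kappa}$ as a right $L_{(0,0)}$-module. By the dual basis lemma, $L_{\kappa}$ is finitely generated projective over $L_{(0,0)}$ on the right. The left-module case is symmetric: using a partition of unity $1 = \sum_{k} u_{k} v_{k}$ of type $(-\kappa,\kappa)$ with $u_{k} \in L_{-\kappa}$, $v_{k} \in L_{\kappa}$, one writes $x = \sum_{k} (x u_{k}) v_{k}$ with $x u_{k} \in L_{(0,0)}$, yielding a finite dual basis on the left.

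There is no real obstacle here: the entire content has already been absorbed into Propositions~\ref{prop:pi_mu} and~\ref{prop:characterisation_strongly_graded}, and the corollary is essentially a bookkeeping exercise combining these two results with the dual basis lemma. The only step requiring any care is noticing that the very same partition of unity used to invert $\pi_{\kappa,-\kappa}$ also serves as a dual basis, which avoids any appeal to external facts about invertible bimodules.
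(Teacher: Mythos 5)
Your proof is correct. The paper itself gives no proof (the corollary is closed with \(\square\), treating it as an immediate consequence of the preceding propositions and as a standard fact essentially contained in Dade~\cite{GRD}); your argument supplies the natural filling-in. The invertibility claim is exactly Proposition~\ref{prop:pi_mu} applied twice, with \((\lambda,\rho)=(\kappa,-\kappa)\) and \((-\kappa,\kappa)\), once the existence of the two required partitions of unity has been extracted from Proposition~\ref{prop:characterisation_strongly_graded}. Your choice to prove the ``in particular'' clause directly \emph{via} the dual basis lemma, rather than by citing the Morita-theoretic fact that invertible bimodules are finitely generated projective on both sides, is a pleasant simplification: the very partition of unity \(1=\sum_j\alpha_j\beta_j\) that inverts \(\pi_{\kappa,-\kappa}\) already gives the dual basis \(\{(\alpha_j,f_j)\}_j\) with \(f_j(x)=\beta_j x\) for the right module structure, and symmetrically \(1=\sum_k u_k v_k\) of type \((-\kappa,\kappa)\) gives \(x=\sum_k(xu_k)v_k\) for the left module structure. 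This keeps the corollary self-contained within the partition-of-unity machinery the paper has set up, which is exactly the spirit intended.
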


\part{Contractibility of \textsc{Novikov} homology implies finite domination}
\label{part:contr-textscn-homol}

\section{Rings and modules associated to faces of a square}
\label{sec:rings_faces}

We denote by the symbol $\fS$ the set of non-empty faces of the
polytope $S = [-1,1] \times [-1,1] \subset \bR^{2}$, partially ordered
by inclusion. In Fig.~\ref{fig:faces_of_S} we show our chosen labelling and
orientation of the faces (the orientations will be used later to define
incidence numbers).
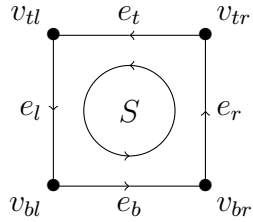
\begin{figure}[ht]
  \centering
  \begin{tikzpicture}
    \draw[->] (0,0) node {\(\bullet\)}
    node[below left] {\(v_{bl}\)}
    -- (1,0) node[below] {\(e_b\)};
    \draw (1,0) -- (2,0); \draw[->] (2,0) node {\(\bullet\)}
    node[below right] {\(v_{br}\)}
    -- (2,1) node[right] {\(e_r\)};
    \draw (2,1) -- (2,2); \draw[->] (2,2) node {\(\bullet\)}
    node[above right] {\(v_{tr}\)}
    -- (1,2) node[above] {\(e_t\)};
    \draw (1,2) -- (0,2); \draw[->] (0,2) node {\(\bullet\)}
    node[above left] {\(v_{tl}\)}
    -- (0,1) node[left] {\(e_l\)};
    \draw (0,1) -- (0,0); \draw[decoration={markings, mark=at position
      0.25 with \arrow{>}, mark=at position 0.75 with {\arrow{>}}},
    postaction={decorate}] (1,1) circle (0.6) node {$S$};
  \end{tikzpicture}
  \caption{Labelling and orientation of the faces of the square~$S$}
  \label{fig:faces_of_S}
\end{figure}

With each face $F \in \fS$ we associate its \textit{barrier cone}
\begin{equation*}
  T_{F} = \mathrm{cone}\, \{x-y \,|\, x \in S,\ y \in F\} \ \subseteq
  \bR^{2} \ ,
\end{equation*}
where $\mathrm{cone}\,(X)$ denotes the set of finite linear
combinations of elements of~$X$ with non-negative real
coefficients. For example, \(T_{v_{bl}}\) is the first quadrant and
\(T_{e_r}\) is the left half-plane. --- Given a $\bZ^{2}$\nbd-graded
ring $R = \bigoplus_{\rho \in \bZ^{2}} R_{\rho}$ we let
$A_F = \R[T_F]$ denote the subring of $R = \R[x, x\inv, y, y\inv]$
consisting of elements with support in~$T_F$; explicitly,
\begin{equation*}
  A_F = \R[T_F] = \bigoplus_{\rho \in T_F \cap \bZ^{2}} R_{\rho} \ .
\end{equation*}
Thus, for example,
\begin{center}
  \begin{tabular}[h]{lll}
    $A_{v_{br}}$ & $= \R[T_{v_{br}}]$ & $= \R[x\inv, y] \ ,$  \\
    $A_{e_{l}}$  & $= \R[T_{e_{l}}]$  & $= \R[x,y,y\inv] \ ,$ \\
    $A_{S}$      & $= \R[T_{S}]$ & $= \R[x, x\inv, y, y\inv] = R \ . $
  \end{tabular}
\end{center}
For $k \in \bZ$ we introduce the $\R[T_{F}]$-bimodules
\begin{equation*}
  \R[kF + T_{F}] = \bigoplus_{\sigma \in (kF + T_{F}) \cap \bZ^{2}} R_{\sigma} \ ,
\end{equation*}
where
$kF + T_{F} = \{ kx + y \,|\, x \in F \text{ and } y \in T_{F}
\}$. In different notation,
\begin{center}
  \begin{tabular}[h]{lll}
    $\R[k v_{br} + T_{v_{br}}]$  & $= x^{-k} y^{-k} \R[x\inv, y] $ & $= x^{-k}y^{-k}
                                                                    A_{v_{br}}
                                                                    \ ,$ \\
    $\R[k e_{l} + T_{e_{l}}]$    & $= x^{-k} \R[x,y,y\inv]$        & $= x^{-k}
                                                            A_{e_{l}}
                                                            \ ,$         \\
    $\R[k S + T_{S}]$            & $= \R[x, x\inv, y, y\inv]$      & $= R \ .$
  \end{tabular}
\end{center}
As a final bit of notation for now, we let $p_{F} \in F \cap \bZ^{2}$
denote the barycentre of~$F$. For example, $p_{e_{t}} = (0,1)$ and
$p_{S} = (0,0)$. Then $kF+T_{F} = kp_{F} + T_{F}$, and every element~$\sigma$
of $kF+T_{F}$ can in fact be written in the form $kp_{F} + \tau$ for some
$\tau \in T_{F}$. Also, given faces $F \subseteq G$ the barrier cones
satisfy $T_{G} = T_{F} + \bR \cdot (p_{G} - p_{F})$ so that, in
particular, $k(p_{G}-p_{F}) \in T_{G}$.

\begin{lemma}
  \label{lem:adjointmapcondition}
  Let $R$ be a strongly $\bZ^{2}$-graded ring.  For all $k \in \bZ$
  and for all $F,G \in \fS$ with $F \subseteq G$, the
  $\R[T_{F}]$-linear inclusion map
  $\alpha_{F,G} \colon \R[kF + T_{F}] \rTo \R[kG + T_{G}]$ is
  such that its adjoint map
  \begin{equation*}
    \alpha_{F,G}^{\sharp} \colon \R[kF + T_{F}] \tensor_{\R[T_{F}]}
    \R[T_{G}] \rTo \R[kG + T_{G}] \ , \quad r \tensor s \mapsto rs
  \end{equation*}
  is an isomorphism.
\end{lemma}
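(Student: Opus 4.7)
The plan is to reduce the statement to two applications of Proposition~\ref{prop:pi_mu}, exploiting the geometric identity $T_G = T_F + \bR \cdot (p_G - p_F)$ together with the strong grading of $R$.

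First I would verify that the multiplication map
\begin{equation*}
  R_{kp_F} \tensor_{R_{(0,0)}} \R[T_F] \rTo \R[kF + T_F], \qquad r \tensor a \mapsto ra,
\end{equation*}
is an isomorphism of right $\R[T_F]$-modules. Right-$\R[T_F]$-linearity is immediate from associativity of multiplication in $R$. For bijectivity, I would compare the homogeneous pieces indexed by $\sigma \in (kp_F + T_F) \cap \bZ^2$: on each such piece the map coincides with $\pi_{kp_F,\, \sigma - kp_F}$, which is an isomorphism by Proposition~\ref{prop:pi_mu} (the required partition of unity of type $(kp_F, -kp_F)$ exists by Proposition~\ref{prop:characterisation_strongly_graded}).

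Substituting this identification into the source of $\alpha_{F,G}^{\sharp}$ and using associativity of tensor products yields
\begin{equation*}
  \R[kF + T_F] \tensor_{\R[T_F]} \R[T_G] \cong R_{kp_F} \tensor_{R_{(0,0)}} \R[T_F] \tensor_{\R[T_F]} \R[T_G] \cong R_{kp_F} \tensor_{R_{(0,0)}} \R[T_G].
\end{equation*}
A second, component-wise application of Proposition~\ref{prop:pi_mu} identifies this last expression, via multiplication, with $\bigoplus_{\tau \in T_G \cap \bZ^2} R_{kp_F + \tau}$.

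It remains to match this graded object with $\R[kG + T_G]$. Writing $v = p_G - p_F$, the identity $T_G = T_F + \bR \cdot v$ noted in the text gives $\bR v \subseteq T_G$, so $-kv \in T_G$ for every $k \in \bZ$, and consequently $kp_F + T_G = kp_G + T_G$ as subsets of $\bR^2$. Thus $(kp_F + T_G) \cap \bZ^2 = (kp_G + T_G) \cap \bZ^2$, which yields $\bigoplus_{\tau \in T_G \cap \bZ^2} R_{kp_F + \tau} = \R[kG + T_G]$. Since every map in the chain is a multiplication map, the composite is precisely $\alpha_{F,G}^{\sharp}$. The one step requiring real care is the book-keeping needed to confirm that each intermediate isomorphism is right-$\R[T_F]$-linear so that tensor-product associativity applies as claimed; the genuine algebraic content reduces entirely to two applications of Proposition~\ref{prop:pi_mu}.
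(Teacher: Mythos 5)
Your proof is correct, but it factors the argument differently from the paper's. The paper proceeds directly: it picks a partition of unity $1 = \sum_j u_j v_j$ of type $(kp_F, -kp_F)$, exhibits $\beta_{F,G}\colon x \mapsto \sum_j u_j \tensor v_j x$ as an explicit candidate inverse, and verifies by hand that both composites with $\alpha_{F,G}^{\sharp}$ are identities --- the only delicate point being that $v_j x \in \R[T_G]$ for $x \in \R[kG + T_G]$, which the paper checks by the degree computation $\sigma - kp_F = \tau + k(p_G - p_F) \in T_G$. You instead isolate the isomorphism $R_{kp_F} \tensor_{R_{(0,0)}} \R[T_F] \iso \R[kF + T_F]$ and its analogue for $T_G$ as graded restatements of Proposition~\ref{prop:pi_mu}, then route $\alpha_{F,G}^{\sharp}$ through $R_{kp_F} \tensor_{R_{(0,0)}} \R[T_G]$ via tensor associativity, with the same degree computation (now in the form $kp_F + T_G = kp_G + T_G$) relegated to identifying the final graded module. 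The underlying algebra is identical --- both proofs secretly encode the same $\beta_{F,G}$ --- but your packaging is more modular: it reuses Proposition~\ref{prop:pi_mu} instead of re-deriving its content, and it makes transparent that $\R[kF+T_F]$ is the ``twist'' of $\R[T_F]$ by the invertible $R_{(0,0)}$-bimodule $R_{kp_F}$. The price you pay is the final step (``since every map in the chain is a multiplication map, the composite is precisely $\alpha_{F,G}^{\sharp}$''), which is slightly glossed: one of your maps is an inverse of a multiplication map, not a multiplication map. This is easy to repair --- either trace $r \tensor s$ through the chain using a partition of unity, or note that both the composite and $\alpha_{F,G}^{\sharp}$ are $\R[T_G]$-linear and agree on elements of the form $r \tensor 1$ --- but as written it deserves a line of justification rather than an appeal to ``book-keeping''.
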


\begin{proof}
  Choose a partition of unity of type $(kp_{F},-kp_{F})$, say
  $1= \sum_{j} u_jv_j$ with $u_j\in R_{kp_{F}}$ and
  $v_j\in R_{-kp_{F}}$. The map
  \begin{equation*}
    \beta_{F,G} \colon \R[kG + T_{G}] \rTo \R [kF + T_{F}]
    \tensor_{\R[T_{F}]} \R[T_{G}]  \ , \quad x \mapsto \sum_{j} u_{j}
    \tensor v_{j} x
  \end{equation*}
  is well-defined. First, $u_{j}$ has degree
  $kp_{F} \in kF \subseteq kF + T_{F}$ whence
  $u_{j} \in \R [kF + T_{F}]$. Second, every element~$\sigma$ of
  $(kG + T_{G}) \cap \bZ^{2}$ can be written in the form $kp_{G} + \tau$,
  with $\tau \in T_{G} \cap \bZ^{2}$. Thus
  \begin{equation*}
    \sigma - kp_{F} = kp_{G} + \tau - kp_{F} = \tau + k(p_{G} - p_{F}) \in T_{G}
    \ .
  \end{equation*}
  It follows that for $x \in \R[kG + T_{G}]$ the product $v_{j} x$ is
  an element of~$\R[T_{G}]$. --- The map $\beta_{F,G}$ is
  $\R[T_{G}]$-linear and satisfies
  $\alpha_{F,G}^{\sharp} \circ \beta_{F,G} = \id$ by direct
  calculation, using $1= \sum_{j} u_jv_j$. We also have
  \begin{equation*}
    \beta_{F,G} \circ \alpha_{F,G}^{\sharp} (r \tensor s) =
    \beta_{F,G} (rs) = \sum_{j} u_{j}
    \tensor v_{j} rs \underset{(*)}= \sum_{j} u_{j} v_{j} r
    \tensor s = r \tensor s
  \end{equation*}
  where the equality labelled $(*)$ is true since
  $v_{j} r \in \R[T_{F}]$ for any $r \in \R[kF + T_{F}]$, whence
  $\beta_{F,G} \circ \alpha_{F,G}^{\sharp} = \id$.
\end{proof}

\section{\v Cech complexes}

\subsection*{Incidence numbers and \v Cech complexes}

Suppose $P$ is a poset equipped with a strictly increasing ``rank''
function 
\begin{equation*}
  \rk \colon P\rTo \mathbb{N} 
\end{equation*}
and with a notion of ``incidence numbers''
\begin{equation*}
  [ \, \cdot\, , \,\cdot\,] \colon P \times P \rTo \bZ
\end{equation*}
satisfying the following conditions:

\begin{itemize}
\item[(DI1)] $[x\colon y]=0$ unless $x>y$ and $\rk(y)=\rk(x) - 1$.
\item[(DI2)] For all $z<x$ with $\rk(z) = \rk(x)-2$, the set
  \begin{equation*}
    P(z<x)=\{y\in P \,|\, z<y<x\}
  \end{equation*}
  is finite, and
  \[\sum_{y\in P( z<x)} [x\colon y]\cdot[y\colon z]=0 \ .\]
\item[(DI3)] For $x \in P$ with $\rk(x)=1$ the set
  \begin{equation*}
    P(<x)=\{y\in P \,|\, y<x\}
  \end{equation*}
  is finite, and
  \[ \sum_{y \in P(<x)}[y\colon z]=0 \ . \]
\end{itemize}

\noindent We can then associate a ``\textsc{\v Cech} complex'' with a
diagram of \(K\)-modules (\(K\) a unital ring) indexed by~$P$:

\begin{definition}
  Given diagram $\Phi\colon P \rTo K\Mod$ with structure maps
  $\phi_{x,y}\colon \Phi_y\rTo\Phi_x$, define its {\it \textsc{\v
      Cech} complex\/} $\Gamma (\Phi) = \Gamma_{P} (\Phi)$ to be the
  chain complex concentrated in non-positive degrees with chain
  modules
  \begin{equation*}
    \Gamma(\Phi)_{-n} = \bigoplus_{x \in \rk\inv(n)} \Phi_{x} \qquad (n
    \geq 0)
  \end{equation*}
  with differential induced by $[x:y] \phi_{x,y}$ for $x>y$.
\end{definition}

It is easy to check that by virtue of~(DI1) and~(DI2) this is indeed a
chain complex. Condition~(DI3) ensures that a cone
$M \rTo^{\scalebox{2}{.}} \Phi$ on~$\Phi$, that is, a collection of
$K$-module maps $\kappa_{x} \colon M \rTo \Phi_x$ satisfying the
compatibility condition $\kappa_{x}= \phi_{x,y} \circ \kappa_{y}$,
yields a chain complex
\begin{equation*}
  \ldots \lTo \Gamma_{P}(\Phi)_{-1} \rTo \Gamma_{P} (\Phi)_{0}
  \lTo^{\kappa} M
\end{equation*}
with co-augmentation~$\kappa$ induced by the maps $\kappa_{x}$ for $x
\in \rk\inv(0)$.

\subsection*{\v Cech complexes of diagrams of chain complexes}

The assignment $\Phi \mapsto \Gamma_{P} (\Phi)$ is in fact an exact
functor from the category of $P$\nbd-indexed diagrams of
$K$\nbd-modules to the category of chain complexes of
$K$\nbd-modules. Hence it can be prolongated, by levelwise
application, to a functor $\Gamma_{P}$ from the category of
$P$\nbd-indexed diagrams of $K$\nbd-module chain complexes to the
category of double chain complexes of $K$\nbd-modules. The resulting
double complexes has rows $\Gamma_{P}(C_{t})$, that is,
$\Gamma_{P}(C)_{s,t} = \Gamma_{P}(C_{t})_{s}$, and commuting
differentials.  By construction, $\Gamma_{P}(C)_{s,t} = 0$ for
$s > 0$. --- Passage to \textsc{\v Cech} complexes is homotopy
invariant in the following sense:

\begin{lemma}
  \label{qi}
  Let $\chi \colon \Phi \rTo \Psi$ be a natural transformation of
  functors $\Phi,\Psi \colon P\rTo \Ch(K\Mod)$. Suppose that for
  each $p\in P$ the component $\chi_{p} \colon \Phi_{p} \rTo \Psi_{p}$
  is a quasi-isomorphism. Suppose also that the rank function is
  bounded above. Then the induced map of chain complexes
  \begin{equation*}
    \tot \Gamma(\chi) \colon \tot \Gamma (\Phi) \rTo \tot \Gamma
    (\Psi)
  \end{equation*}
  is also a quasi-isomorphism.
\end{lemma}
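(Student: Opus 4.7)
The plan is to reduce to the statement that the totalised \v Cech complex of a pointwise acyclic diagram is acyclic, and then to deduce the latter by a spectral sequence argument. For the reduction, observe that \(\Gamma = \Gamma_{P}\) is exact and is applied levelwise in the chain direction, so it commutes with the formation of mapping cones of natural transformations; applied to \(\chi \colon \Phi \rTo \Psi\) this yields a natural isomorphism of total complexes
\[ \cone\bigl( \tot \Gamma(\chi) \bigr) \iso \tot \Gamma(\cone \chi) \ , \]
where \(\cone \chi\) is the \(P\)-indexed diagram of chain complexes \(p \mapsto \cone(\chi_{p})\). Since a map of chain complexes is a quasi-isomorphism if and only if its mapping cone is acyclic, the hypothesis on \(\chi\) ensures that \((\cone \chi)_{p}\) is acyclic for every \(p \in P\), and the problem reduces to showing that \(\tot \Gamma(\Theta)\) is acyclic whenever \(\Theta \colon P \rTo \Ch (K\Mod)\) is a pointwise acyclic diagram.

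For this reduced statement I would filter the double complex \(\Gamma(\Theta)\) by its columns in the \v Cech direction, setting \(F_{q}\tot \Gamma(\Theta) = \bigoplus_{s \geq -q} \Gamma(\Theta)_{s, \ast}\). Because \(\rk\) is bounded above by some \(N \in \bN\), and because \(\Gamma(\Theta)_{s, \ast} = 0\) for \(s > 0\) by construction, only the columns with \(-N \leq s \leq 0\) can be non-zero, so the filtration is finite, bounded, and exhaustive. The associated spectral sequence therefore converges strongly to \(H_{\ast}\bigl(\tot \Gamma(\Theta)\bigr)\). Its \(E_{1}\) page is computed by taking homology of each column with respect to the vertical (chain complex) differential; column~\(s\) is the direct sum \(\bigoplus_{x \in \rk\inv(-s)} \Theta_{x}\) of chain complexes, and since homology commutes with direct sums we obtain
\[ E_{1}^{s,t} = \bigoplus_{x \in \rk\inv(-s)} H_{t}(\Theta_{x}) = 0 \ , \]
using that each \(\Theta_{x}\) is acyclic. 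Hence \(E_{1} = 0\), so \(E_{\infty} = 0\), and \(\tot \Gamma(\Theta)\) is acyclic, as required.

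The only mildly technical point, and therefore the main potential obstacle, is the convergence of the spectral sequence: this is where the boundedness of the rank function genuinely enters, as it is precisely this hypothesis that makes the column filtration bounded and thereby forces strong convergence. No further difficulty arises, since the vertical differential of each column acts componentwise in the direct sum over \(x \in \rk\inv(-s)\), so the identification of the \(E_{1}\) page is entirely formal.
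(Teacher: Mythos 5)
Your proof is correct, but it takes a noticeably different route from the paper's. You first use that $\Gamma$ commutes with mapping cones (since it is additive and applied levelwise) to reduce to showing that $\tot \Gamma(\Theta)$ is acyclic for a pointwise acyclic diagram $\Theta$, and then run the spectral sequence of the column filtration, noting that boundedness of the rank function makes the filtration finite and hence the spectral sequence convergent. The paper's proof avoids both the mapping-cone reduction and spectral sequences: it truncates $\Gamma$ column by column, producing a tower of short exact sequences of double complexes whose totalisations relate consecutive truncations via a shifted column, and then proceeds by induction on the truncation level $p$ using the five lemma applied to the resulting ladder of long exact homology sequences; boundedness of the rank function ensures the induction terminates at $\Gamma$ itself. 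Your version is shorter and leans on standard homological machinery; the paper's is more elementary and self-contained (it is, in effect, an explicit unrolling of the column spectral sequence), which matches the authors' stated aim of including an ``elementary proof for convenience.'' One small point worth making explicit in your argument: the direct sum over $x \in \rk^{-1}(-s)$ may be infinite, so you are implicitly using that homology commutes with arbitrary direct sums — true, and harmless, but worth a word.
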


\begin{proof}
  This is a standard result, we include an elementary proof for
  convenience. For $p \geq 0$ let $\Gamma^{p}$ denote the horizontal
  truncation at~$-p$ of~$\Gamma$. That is, $\Gamma^{p} (\Phi)$ is
  defined to be the double chain complex that agrees with
  $\Gamma(\Phi)$ in columns $-p,\, -p+1,\, \cdots,\, 0$, and is
  trivial otherwise. As the rank function is bounded above,
  $\Gamma^{p} = \Gamma$ for sufficiently large~$p$.

  The obvious surjection $\Gamma^{p+1}(\Phi) \rTo \Gamma^{p} (\Phi)$,
  given by identity maps in horizontal degrees $-p$ and higher, has
  kernel the vertical chain complex
  \begin{equation*}
    K_{p+1}(\Phi) = \bigoplus_{x \in \rk\inv (p+1)} \Phi_x \ ,
  \end{equation*}
  considered as a double chain complex concentrated in
  column~$-(p+1)$. Its totalisation is then the shift suspension
  $K_{p+1}(\Phi)[-p-1]$ of the chain complex~$K$. By induction on
  $p \geq 0$, using the five lemma for ladder diagrams of long exact
  homology sequences induced by the diagram of
  short exact sequences below,
  \begin{diagram}[small]
    0 & \rTo & K_{p+1}(\Phi)[-p-1] & \rTo & \tot \Gamma^{p+1} (\Phi) &
    \rTo & \tot \Gamma^{p} (\Phi) & \rTo & 0 \\
    && \dTo && \dTo && \dTo \\
    0 & \rTo & K_{p+1}(\Psi)[-p-1] & \rTo & \tot \Gamma^{p+1} (\Psi) &
    \rTo & \tot \Gamma^{p} (\Psi) & \rTo & 0
  \end{diagram}
  the induced maps
  $\tot \Gamma^{p} (\chi) \colon \tot \Gamma^{p} (\Phi) \rTo \tot
  \Gamma^{p} (\Psi)$
  are seen to be quasi-iso\-mor\-phisms for all $p \in \bN$.
\end{proof}

\section{Quasi-coherent diagrams}
\label{sec:qcoh_diagram}

As before we denote by the symbol $\fS$ the poset of non-empty faces
of the square $S = [-1,1] \times [-1,1]$. For the purpose of taking
\textsc{\v Cech} complexes we equip $\fS$ with the rank function
$\mathrm{rk}(F) = \dim(F)$, and the usual incidence numbers coming
from the orientations indicated in Fig.~\ref{fig:faces_of_S}. For
example, \([e_{r} : v_{tr}] = 1\) and \([e_{r} : v_{br}] = -1\), and
\([S : e_?] = 1\) for any decoration \(? \in \{t,l,b,r\}\). Let \(R\)
be a \(\bZ^2\)-graded ring.

\begin{definition}
  A {\it quasi-coherent diagram of modules\/} is a functor
  \begin{equation*}
    M \colon \fS \rTo R_{(0,0)} \Mod \ , \quad F \mapsto M^{F}
  \end{equation*}
  as depicted in Fig.~\ref{fig:qc-diag}. In addition, for each~$F$ the
  entry~$M^F$ is to be equipped with a specified structure of an
  $\R[T_{F}]$\nbd-module, extending the given $R_{(0,0)}$-module
  structure. For an inclusion of faces $F \subseteq G$ we require the
  structure map $\alpha^{F,G} \colon M^{F} \rTo M^{G}$ to be
  $\R[T_{F}]$\nbd-linear, such that the adjoint map
  \begin{equation*}
    \alpha^{F,G}_{\sharp} \colon M^{F}\tensor_{\R[T_F]} \R[T_G]\rTo
    M^G \ , \quad m \tensor x \mapsto \alpha^{F,G} (m) \cdot x
  \end{equation*}
  is an isomorphism of $\R[T_{G}]$\nbd-modules.---A {\it
    quasi-coherent diagram of chain complexes\/} is a chain complex of
  quasi-coherent diagrams of modules.
\end{definition}

\begin{figure}[ht]
  \centering
  \begin{diagram}
    M^{v_{tl}} & \rTo^{\alpha^{v_{tl},e_t}} & M^{e_t} & \lTo^{\alpha^{v_{tr},e_t}} & M^{v_{tr}} \\
    \dTo^{\alpha^{v_{tl},e_l}} && \dTo^{\alpha^{e_t,S}} && \dTo^{\alpha^{v_{tr},e_r}} \\
    M^{e_l} & \rTo^{\alpha^{e_l,S}} & M^S & \lTo^{\alpha^{e_r,S}} &M^{e_r} \\
    \uTo^{\alpha^{v_{bl},e_l}} && \uTo^{\alpha^{e_b,S}} && \uTo^{\alpha^{v_{br},e_r}} \\
    M^{v_{bl}}& \rTo^{\alpha^{v_{bl},e_b}} & M^{e_b} &
    \lTo^{\alpha^{v_{br},e_b}} & M^{v_{br}}
  \end{diagram}
  \caption{Quasi-coherent diagram}
  \label{fig:qc-diag}
\end{figure}

We remark that a quasi-coherent diagram of chain complexes can be
considered as a functor defined on~$\fS$ with values in the category
of chain complexes of $R_{(0,0)}$-modules, subject to conditions as above
specified levelwise. Moreover, any quasi-coherent diagram of modules
can be considered as a quasi-coherent diagram of complexes
concentrated in chain degree~$0$.

In case $R$ is a \textsc{Laurent} polynomial ring in two variables
with coefficients in a commutative ring~$K$, a quasi-coherent
diagram of modules is nothing but a quasi-coherent sheaf of modules on
the product $\mathbb{P}^{1}_{K} \times \mathbb{P}^{1}_{K}$ of
the projective line over~$R_{K}$ with itself.

\medbreak

Given a $\bZ^2$\nbd-graded ring~$R$ and $k \in \bZ$, we denote
by~$D(k)$ the diagram depicted in Fig.~\ref{fig:D(k)}.
\begin{figure}[h]
  \centering
  \begin{diagram}%[small]
    {\R[kv_{tl} + T_{v_{tl}}] \atop x^{-k} y^{k} \R[x,y\inv]} %
      & \rTo[l>=3em] & {\R[ke_{t} + T_{e_{t}}]  \atop y^{k} \R[x,x\inv,y\inv]}%
      & \lTo[l>=3em] & {\R[kv_{tr} + T_{v_{tr}}] \atop x^{k} y^{k} \R[x\inv,y\inv]} \\
    \dTo && \dTo && \dTo \\
    {\R[ke_{l} + T_{e_{l}}] \atop x^{-k} \R[x,y,y\inv]} %
    & \rTo & {\R[kS + T_{S}]  \atop \R[x,x\inv,y,y\inv]} %
      & \lTo & {\R[ke_{r} + T_{e_{r}}] \atop x^{k} \R[x\inv,y,y\inv]} \\
    \uTo && \uTo && \uTo \\
    {\R[kv_{bl} + T_{v_{bl}}] \atop x^{-k} y^{-k} \R[x,y]}%
    & \rTo & {\R[ke_{b} + T_{e_{b}}] \atop y^{-k} \R[x,x\inv,y]} %
    & \lTo & {\R[kv_{br} + T_{v_{br}}] \atop x^{k} y^{-k} \R[x\inv,y]}
  \end{diagram}
  \caption{The quasi-coherent diagram~$D(k)$}
  \label{fig:D(k)}
\end{figure}
The arrows are inclusion maps. {\it For a strongly
  $\bZ^{2}$\nbd-graded ring this diagram is quasi-coherent\/} as the
adjoint maps are isomorphisms in light of
Lemma~\ref{lem:adjointmapcondition}. These diagrams will play a
central role later on. They are the analogues of certain line bundles
on $\mathbb{P}^{1}_{R_{(0,0)}} \times \mathbb{P}^{1}_{R_{(0,0)}}$,
\viz, the external tensor square of $\mathcal{O}(k)$. We thus are led
to expect the following calculation of its ``global sections'' and
(trivial) ``higher cohomology'' (note
\(H_{-n} \Gamma_{\fS} \iso \lim^{n}\) as shown, for example, in
\cite[Corollary~2.19]{H-cofibres}):

\begin{proposition}
  \label{cechcomplex}
  For $k \geq 0$ the complex
  \begin{equation*}
    \Gamma_{\fS} \big( D(k) \big) \lTo^{\iota} \bigoplus_{x,y =
      -k}^{k} R_{(x,y)} \lTo 0
  \end{equation*}
  is exact, where $\iota$ is the diagonal embedding, induced from
  the inclusions of its source into $D(k)^{v}$ with $v$ a vertex
  of~$S$.

  More explicitly, the sequence
  \begin{multline}
    \label{eq:Cech_Dk}
    0 \lTo  \Gamma_{\fS} \big( D(k) \big)_{-2} \lTo^{d_{-1}} %
     \Gamma_{\fS} \big( D(k) \big)_{-1} \\ %
    \lTo^{d_{0}} \Gamma_{\fS} \big( D(k) \big)_{0} \lTo^{\iota} %
    \bigoplus_{x,y = -k}^{k} R_{(x,y)} \lTo 0
  \end{multline}
  is exact.
\end{proposition}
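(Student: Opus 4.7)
The plan is to exploit the $\bZ^2$\nbd-grading on~$R$ and reduce the statement to a purely combinatorial check, performed one bigrading at a time. Each entry $D(k)^F = \R[kF + T_F]$ splits as the homogeneous direct sum $\bigoplus_{\sigma \in (kF + T_F) \cap \bZ^2} R_\sigma$, and every map appearing in the co-augmented sequence~\eqref{eq:Cech_Dk}, including the co-augmentation~$\iota$, is an inclusion of these homogeneous pieces multiplied by an integer incidence number. Hence the whole co-augmented complex decomposes as a direct sum indexed by $(s,t) \in \bZ^2$, and it suffices to prove exactness of each $(s,t)$\nbd-summand.

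For fixed $(s,t)$, set $\fS(s,t) = \{F \in \fS \colon (s,t) \in kF + T_F\}$. The $(s,t)$\nbd-summand is the \textsc{\v{C}ech} complex of the constant $R_{(s,t)}$\nbd-valued diagram on $\fS(s,t)$, co-augmented by one copy of $R_{(s,t)}$ exactly when $|s|, |t| \leq k$ and by zero otherwise. The first step is to note that $\fS(s,t)$ is always an upper set of~$\fS$: using the identity $T_G = T_F + \bR \cdot (p_G - p_F)$ recorded in Section~\ref{sec:rings_faces}, one obtains $kG + T_G = kp_F + T_G \supseteq kF + T_F$ whenever $F \subseteq G$. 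A routine inspection of the half-plane inequalities defining the nine cones $kF + T_F$ then shows that, up to the $D_4$\nbd-symmetry of the square~$S$, $\fS(s,t)$ falls into one of three shape classes, determined by the position of $(s,t)$ relative to the lattice box $[-k,k]^2 \cap \bZ^2$:
\begin{itemize}
\item \emph{interior}, $|s|, |t| \leq k$, with $\fS(s,t) = \fS$;
\item \emph{edge strip}, say $s > k$ and $|t| \leq k$, with $\fS(s,t) = \fS \setminus \{v_{br}, v_{tr}, e_r\}$ (plus three symmetric variants);
\item \emph{corner}, say $s > k$ and $t > k$, with $\fS(s,t) = \{v_{bl}, e_b, e_l, S\}$ (plus three symmetric variants).
\end{itemize}

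In the interior case the co-augmented summand $R_{(s,t)} \to R_{(s,t)}^4 \to R_{(s,t)}^4 \to R_{(s,t)}$ coincides, by the choice of orientations in Fig.~\ref{fig:faces_of_S}, with the augmented cellular cochain complex of the contractible CW complex~$S$ with constant coefficients $R_{(s,t)}$, and so is acyclic. In each edge and corner case the co-augmentation vanishes and the summand is short; exactness follows from a direct calculation of kernels and images using the incidence signs. Conceptually, these cases compute the relative cochain complex of the pair $(S, L)$, where $L$ is the realisation of the complementary down-set (a single closed edge in the edge case, or the ``L''-shape of two adjacent closed edges in the corner case). Since $L$ is contractible and $S$ collapses onto~$L$, this relative complex is acyclic.

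The main obstacle will be the sign bookkeeping in the edge and corner cases: the kernels and images have to line up exactly, and this requires a careful reading of the incidence numbers coming from Fig.~\ref{fig:faces_of_S}. The $D_4$\nbd-symmetry of the square reduces the workload to a single representative of each non-trivial shape class, so in practice the case analysis remains very short.
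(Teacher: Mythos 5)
Your proof is correct and follows essentially the same strategy as the paper: split the co-augmented complex by the $\bZ^2$-grading, identify the degree-$(s,t)$ summand with the (co-augmented or unaugmented) cellular cochain complex of $S$ or of the pair $(S,L)$ for a contractible sub-CW-complex $L$, and conclude acyclicity. The paper states the interior case explicitly and refers to \cite{MR2494885} for the remaining cases (the ``complement of a visibility subcomplex of the boundary'', your $L$); your write-up simply makes the three shape classes and the upper-set structure of $\fS(s,t)$ explicit rather than citing them.
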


\begin{proof}
  The complex consists of $\bZ^{2}$\nbd-graded $R_{(0,0)}$-modules and
  degree-pre\-ser\-ving maps, so exactness can be checked in each
  degree separately. In degree $(x,y) \in [-k,k]^2$, the complex is
  the dual of the augmented cellular chain complex of the square
  (equipped with its obvious cellular structure), which is
  contractible, tensored with~$R_{(x,y)}$. For all other degrees
  $(x,y)$, we see the dual of the cellular complex associated to the
  complement of a visibility subcomplex of the boundary of~$S$. In
  either case, the resulting complex is acyclic. --- More details on
  the computation can be found in the appendix of~\S2.5
  in~\cite{MR2494885}, for example.
\end{proof}

\section{Extending chain complexes of modules to quasi-coherent diagrams of chain complexes}

Suppose that $M$ is a finitely generated free module over the
$\bZ^{2}$\nbd-graded ring~$R$. Then there exists a quasi-coherent
diagram which has $M$ as its middle entry. More precisely, we fix an
isomorphism between $M$ and $R^{n}$, for some $n \geq 0$, and have
$\big( \bigoplus_{n} D(k) \big)^{S} \iso M$ for any $k \in \bZ$. This
shows that finitely generated free $R$-modules can be extended to
quasi-coherent diagrams. We need the following chain complex version
of this fact:

\begin{proposition}
  \label{formingasheaf}
  Let $R$ be a $\bZ^{2}$\nbd-graded ring, and let $C$ be a bounded
  above chain complex of finitely generated free $R$-modules. Then
  there exists a complex~$\cY$ of diagrams of the form
  $\bigoplus_{r} D(k)$, for various $r \geq 0$, such that
  $C \iso \cY^{S}$ as $R$\nbd-module complexes. More precisely,
  suppose that $C$ is concentrated in degrees $\leq t$. There are
  numbers
  \begin{equation*}
    0 \leq k_{t} \leq k_{t-1} \leq k_{t-2} \leq \ldots
  \end{equation*}
  and numbers $r_{n} \geq 0$, with $r_{n} = 0$ whenever $C_{n} = 0$,
  such that we can choose $\cY_{n} = \bigoplus_{r_{n}} D(k_{n})$. If
  $C$ is bounded, then so is~$\cY$. --- If the ring~$R$ is strongly
  $\bZ^{2}$\nbd-graded then $\cY$ consists of quasi-coherent diagrams.
\end{proposition}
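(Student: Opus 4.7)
The plan is to proceed by downward induction on chain degree, constructing \(\cY_n\) and compatible lifted differentials one step at a time starting from the top. I first fix isomorphisms \(C_n \iso R^{r_n}\), where \(r_n \geq 0\) is the rank of the finitely generated free \(R\)\nbd-module \(C_n\), with \(r_n = 0\) exactly when \(C_n = 0\). Each differential \(d_n \colon C_n \rTo C_{n-1}\) is then encoded by a matrix \((\rho^n_{ij})\) of elements of~\(R\); because the matrix is finite and each entry is a finite sum of homogeneous components, the set of bidegrees occurring in any entry of \(d_n\) is a finite subset of~\(\bZ^2\).

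For the base case I set \(k_t = 0\) and \(\cY_t = \bigoplus_{r_t} D(0)\); since \(D(0)^S = R\), we obtain \(\cY_t^S \iso C_t\). For the inductive step, assume \(\cY_i = \bigoplus_{r_i} D(k_i)\) and lifted differentials have been produced for \(t \geq i \geq n\), with \(0 \leq k_t \leq \ldots \leq k_n\). I would then choose an integer \(k_{n-1} \geq k_n\) large enough so that \(k_{n-1} - k_n\) dominates both \(|a|\) and \(|b|\) for every bidegree \((a,b)\) appearing in any matrix entry \(\rho^n_{ij}\) (take \(k_{n-1} = k_n\) if \(d_n = 0\)), and set \(\cY_{n-1} = \bigoplus_{r_{n-1}} D(k_{n-1})\), so that \(\cY_{n-1}^S \iso C_{n-1}\).

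The central verification is that \((\rho^n_{ij})\) actually defines a morphism of diagrams \(d_n^{\cY} \colon \cY_n \rTo \cY_{n-1}\). At each face \(F \in \fS\) the proposed map is left multiplication by \((\rho^n_{ij})\), restricted from \(R\) to the subgroup \(D(k_n)^F = \R[k_n F + T_F]\); one must check the image lands in \(D(k_{n-1})^F = \R[k_{n-1} F + T_F]\). Using \(kF + T_F = kp_F + T_F\), this reduces to the containment \((a,b) + k_n p_F + T_F \subseteq k_{n-1} p_F + T_F\) for each bidegree \((a,b)\) in some entry of \(d_n\); for the four vertices \(v\) this is exactly the bound \(|a|, |b| \leq k_{n-1} - k_n\), while the edge and top-dimensional cases follow because \(T_F\) grows with \(\dim F\). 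Compatibility with the structure maps of \(D(k_n)\) and \(D(k_{n-1})\) is automatic since every face entry is a subgroup of the common ambient ring~\(R\) and the diagram maps are inclusions.

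Finally, the relation \(d_{n-1}^{\cY} \circ d_n^{\cY} = 0\) holds at each face, since it is the restriction to \(\R[k_n F + T_F]\) of left multiplication by the matrix product \((\rho^{n-1}_{ij})(\rho^n_{ij})\), which already vanishes in~\(R\) by \(d_{n-1} d_n = 0\). If \(C\) is bounded below, the induction terminates and \(\cY\) is bounded. When \(R\) is strongly \(\bZ^2\)\nbd-graded each \(D(k_n)\) is quasi-coherent by Lemma~\ref{lem:adjointmapcondition}, and direct sums of quasi-coherent diagrams are quasi-coherent, so every \(\cY_n\) is quasi-coherent. I expect the main obstacle to be the polyhedral bookkeeping in the central verification — marshalling the bound read off from the four corners of~\(S\) into a face-by-face containment uniformly across all nine faces of~\(\fS\); rank counting, base case, compatibility of differentials, and quasi-coherence are then formalities.
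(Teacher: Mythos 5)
Your proposal is correct and follows essentially the same approach as the paper. You build the $k_n$ by a downward induction on chain degree, whereas the paper writes out the closed formula $k_n = \sum_{j=n+1}^t a_j$ with $a_j$ the largest absolute coordinate of a bidegree occurring in the entries of the $j$th differential matrix — but these are the same numbers, and the crucial observation (that the constraint from the four vertex cones $|a|,|b|\le k_{n-1}-k_n$ already implies the constraints at edges and at $S$, since $T_F \subseteq T_G$ for $F \subseteq G$) is identical; the paper merely asserts well-definedness in a parenthetical, while you spell out the polyhedral check.
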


\begin{proof}
  We identify the chain module $C_{n}$ with $R^{r_{n}}$, for suitable
  numbers $r_{n} \geq 0$, where $r_{n} = 0$ if and only if
  $C_{n} = 0$; this amounts to choosing basis elements for the free
  modules~\(C_{n}\). The action of the differential
  $C_{n} \rTo C_{n-1}$ is then given by left multiplication by a
  matrix~$D_{n}$ of size $r_{n-1} \times r_{n}$, with entries in~$R$.

  Given a homogeneous non-zero element $x \in R_{(s,t)}$ we write
  $a(x) = \max \big( |s|, |t| \big)$; we also agree $a(0) = 0$. For a
  general element $x \in R$ let $a(x)$ denote the maximum of the
  values $a(x_i)$, where the $x_i$ are the homogeneous components
  of~$x$. We denote by $a_{n}$ the maximum of the values $a(x)$ where
  $x$ varies over the entries of the matrix~$D_{n}$, with the
  convention that $a_{n} = 0$ if $D_{n}$ is the empty matrix (\ie, if
  $r_{n} = 0$ or $r_{n-1} = 0$).

  Suppose that $C$ is concentrated in chain degrees~$\leq t$. We set
  $k_{n} = 0$ for $n \geq t$, and
  \begin{equation*}
    k_{n} = \sum_{j=n+1}^{t} a_{j}
  \end{equation*}
  for $k < t$. We then define the chain complex~$\cY$ of
  quasi-coherent diagrams by setting
  $\cY_{n} = \bigoplus_{r_{n}} D(k_{n})$; the differentials
  $d_{n} \colon \cY_{n} \rTo \cY_{n-1}$ are given by left
  multiplication by the matrix $D_{n}$ in each component, so that
  \begin{equation*}
    d_{n}^{F} \colon \cY_{n}^{F} = \big( \bigoplus_{r_{n}} D(k_{n}) \big)^{F}
    \rTo^{\cdot D_{n}} \big( \bigoplus_{r_{n-1}} D(k_{n-1}) \big)^{F} =
    \cY_{n-1}^{F} \ .
  \end{equation*}
  (This is well defined: The numbers \(k_{n}\) are chosen precisely to
  ensure that \(d_{n}^{F}\) maps \(\cY_{n}^{F}\) to~\(\cY_{n-1}^{F}\).)
  As $D_{n-1} \cdot D_{n} = 0$ we have $d_{n-1} \circ d_{n} = 0$, and
  $C = \cY^{S}$ by construction.

  It was observed before that the diagrams $D(k)$ are quasi-coherent
  if $R$ is strongly $\bZ^{2}$-graded (this follows from
  Lemma~\ref{lem:adjointmapcondition}); hence $\cY$ consists of
  quasi-coherent diagrams in this case.
\end{proof}

\begin{corollary}
  \label{findomiso}
  Let $R$, $C$, $\cY$, $r_{n}$ and $k_{n}$ be as in
  Proposition~\ref{formingasheaf}, with $C$ bounded. Then
  $D = \lim \cY$ is a bounded complex of \(R_{(0,0)}\)-modules with
  chain modules
  \begin{equation*}
    D_{n} =  \Big( \bigoplus_{x,y
      =-k_{n}}^{k_{n}} R_{(x,y)} \Big)^{r_{n}} \ .
  \end{equation*}
  If $R$ is strongly $\bZ^{2}$\nbd-graded these $R_{(0,0)}$-modules
  are finitely generated projective. The natural map
  $D \rTo \tot \, \Gamma_{\fS} (\cY)$ is a homotopy equivalence of
  $R_{(0,0)}$\nbd-module complexes so that
  $\tot \, \Gamma_{\fS} (\cY)$ is $R_{(0,0)}$\nbd-finitely dominated.
\end{corollary}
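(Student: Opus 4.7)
The plan is to verify the three claims of the corollary---the explicit formula for $D_n$, its projectivity in the strongly graded case, and the homotopy equivalence $D \simeq \tot \Gamma_\fS(\cY)$---in that order, extracting finite domination as an immediate consequence.

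First I would compute each $D_n = \lim_\fS \cY_n$ explicitly. Since $\fS$ is a finite poset and the direct sum $\cY_n = \bigoplus_{r_n} D(k_n)$ is also finite, the limit commutes with this sum, giving $D_n \iso \bigl(\lim_\fS D(k_n)\bigr)^{r_n}$. Every structure map of $D(k)$ is an inclusion of $\bZ^2$-homogeneous submodules of $D(k)^S = R$, so $\lim_\fS D(k)$ is the intersection of all entries inside $R$; as edge entries contain vertex entries, this equals the intersection of the four vertex entries. Reading off the exponent constraints from Fig.~\ref{fig:D(k)} yields
\[
\lim_\fS D(k_n) = \bigoplus_{x,y=-k_n}^{k_n} R_{(x,y)},
\]
which combined with the $r_n$-fold direct sum gives the claimed formula for $D_n$. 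When $R$ is strongly $\bZ^2$-graded, Corollary~\ref{cor:components_fgp} says each $R_{(x,y)}$ is f.g.\ projective over $R_{(0,0)}$; as the index set is finite, $D_n$ is f.g.\ projective, and the boundedness of $D$ is inherited directly from $\cY$.

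Next I would construct the natural map $\iota \colon D \to \tot \Gamma_\fS(\cY)$ and show it is a quasi-isomorphism. In each chain degree $n$, the map $\iota_n$ is the diagonal embedding $D_n \hookrightarrow \Gamma_\fS(\cY_n)_0 = \bigoplus_v \cY_n^v$ into the four vertex entries; its image lies in the kernel of the horizontal differential, and compatibility with the vertical differential inherited from $\cY$ makes $\iota$ a chain map. Because $\Gamma_\fS$ commutes with direct sums, Proposition~\ref{cechcomplex} applied to each summand of $\cY_n = \bigoplus_{r_n} D(k_n)$ and summed yields exactness of the augmented row
\[
0 \leftarrow \Gamma_\fS(\cY_n)_{-2} \leftarrow \Gamma_\fS(\cY_n)_{-1} \leftarrow \Gamma_\fS(\cY_n)_{0} \leftarrow D_n \leftarrow 0.
\]
The mapping cone of $\iota$ is the totalisation of a bounded double complex with exact rows; by the standard acyclic-assembly lemma (equivalently, the row-filtration spectral sequence, whose $E_1$-page is trivial) this totalisation is acyclic, so $\iota$ is a quasi-isomorphism.

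Finally I would promote $\iota$ to a chain homotopy equivalence and conclude. By Corollary~\ref{cor:components_fgp} every component $R_\rho$ is $R_{(0,0)}$-projective, so each entry $\cY_n^F$---a direct sum of such components---is projective, and therefore every chain module of $\tot \Gamma_\fS(\cY)$ is projective. The mapping cone of $\iota$ is thus a bounded acyclic complex of projective $R_{(0,0)}$-modules; such a complex is contractible by an elementary splitting argument from the top down, so $\iota$ is a chain homotopy equivalence. Since $D$ is a bounded complex of f.g.\ projective $R_{(0,0)}$-modules, $\tot \Gamma_\fS(\cY)$ is chain homotopy equivalent to such a complex, and hence $R_{(0,0)}$-finitely dominated by the homological characterisation recalled in the introduction. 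The main obstacle is the acyclic-assembly step in the second paragraph: relating row exactness of a bounded double complex to acyclicity of the totalisation is standard but is the only piece of the argument that does not come straight from earlier results; everything else is bookkeeping that assembles Proposition~\ref{cechcomplex} and Corollary~\ref{cor:components_fgp}.
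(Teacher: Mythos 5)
Your proposal is correct and follows essentially the same route as the paper: row exactness of the augmented \textsc{\v Cech} double complex from Proposition~\ref{cechcomplex}, the resulting quasi-isomorphism $D\to\tot\Gamma_\fS(\cY)$ via standard double-complex machinery, and the upgrade to a homotopy equivalence from boundedness and projectivity. The only difference is cosmetic: you compute $\lim_\fS D(k)$ directly as the intersection of the four vertex entries inside $R$, whereas the paper reads the identification $D_n\iso(\bigoplus_{x,y=-k_n}^{k_n}R_{(x,y)})^{r_n}$ off the exact augmented sequence in Proposition~\ref{cechcomplex}.
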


\begin{proof}
  Consider the double complex concentrated in columns \(-2\), \(-1\),
  \(0\) and~\(1\) with \(n\)th row given by
    \begin{equation*}
    \Gamma_{\fS} \big( \cY_{n} \big) \lTo^{\iota} \Big( \bigoplus_{x,y =
      -k_{n}}^{k_{n}} R_{(x,y)} \Big)^{r_{n}}\ .
  \end{equation*}
  Column~1 is the chain complex \(\lim\, \cY\), see
  Proposition~\ref{cechcomplex}, which consists of finitely generated
  projective \(R_{(0,0)}\)-modules if \(R\) is strongly graded by
  Corollary~\ref{cor:components_fgp}.  As the rows are exact (by
  Proposition~\ref{cechcomplex} again), standard homological algebra
  asserts that the natural map \(\lim \cY \rTo \tot \, \Gamma(\cY)\)
  is a quasi-isomorphism. In the strongly graded case, this is a
  homotopy equivalence (all complexes are bounded below and consist of
  projective \(R_{(0,0)}\)-modules) so that \(\tot \, \Gamma(\cY)\) is
  \(R_{(0,0)}\)-finitely dominated.
\end{proof}

\section{Flags and stars, and their associated rings}

\begin{definition}
  For $F \in \fS$ define the \emph{star of $F$},
  denoted~$\mathrm{st}(F)$, as the set of faces of~$S$ that
  contain~$F$; this is a sub-poset of~$\fS$. Let $\cN_{F}$ be the set
  of flags in~$\mathrm{st}(F)$; the elements of $\cN_{F}$ are strictly
  increasing sequences
  $\tau = (P_0\subset P_1 \subset ... \subset P_k)$ of faces of~$S$
  with $F \subseteq P_{0}$. We consider $\cN_{F}$ as a poset with
  order given by inclusion (or refinement) of flags, the smaller flag
  having fewer elements.
\end{definition}

For each $F \in \fS$ we equip $\cN_{F}$ with the rank function
\begin{equation*}
  \rk (P_0\subset P_1 \subset ... \subset P_k) = k \ .
\end{equation*}
and with standard simplicial incidence numbers: $[A:B] = 0$ unless the
flag~$B$ is obtained from
$A = (P_0\subset P_1 \subset ... \subset P_k)$ by omitting the
entry~$P_{j}$, in which case $[A:B] = (-1)^j$. Whenever we
form a \textsc{\v Cech} complex of a diagram indexed by~$\cN_{F}$ we
will use these data.

\medbreak

We now attach a ring $A_{\tau}$ to each flag
$\tau = (F_0 \subset \cdots \subset F_k)$ of faces in~$\fS$, as
follows:

\begin{equation*}
  \begin{array}{lcl}
    \lr{v_{bl}}         & = & \R\powers {x,y}         \\ 
    \lr{e_b}            & = & \R[x,x\inv] \powers {y} \\
    \lr{S}              & = & \R[x,x\inv, y,y\inv] = R   \\
    \lr{v_{bl}, e_b}    & = & \R\nov {x} \powers{y}   \\
    \lr{v_{bl}, S}      & = & \R\nov {x, y}     \\ % \R\nov {T_{v_{bl}}}     \\
    \lr{e_b, S}         & = & \R[x,x\inv]\nov {y}     \\
    \lr{v_{bl}, e_b, S} & = & \R\nov {x} \nov{y}
  \end{array}
\end{equation*}
The effect of replacing ``left'' by ``right'' (\ie, replacing the
subscript ``$l$'' by~''$r$'' throughout) is to replace~$x$ by~$x\inv$,
while replacing ``bottom'' by~''top'' (\ie, replacing the subscript ``$b$''
by~''$t$'' throughout) means replacing~$y$ by~$y\inv$. Swapping
``bottom'' and ``left'' amounts to swapping $x$ with~$y$, and swapping
``top'' and~''right'' results in $x\inv$ and~$y\inv$ swapping
places. Thus, for example,
\begin{equation*}
  \lr{v_{tl},e_l}  = \R\nov {y\inv} \powers{x} %
  \quad \text{and} \quad %
  \lr{v_{tr}, e_r, S} = \R\nov {y\inv} \nov{x\inv} \ .
\end{equation*}
By direct inspection, this collection of rings is seen to have the
following properties:
\begin{enumerate}
\item If $\sigma \subseteq \tau$ then
  $A \langle \sigma \rangle \subseteq A \langle \tau \rangle$. In
  particular, if $S \in
  \tau$ %$\tau$ is a flag involving the square~$S$ itself,
  then $R = \R[x,x\inv, y,y\inv] = R \langle S \rangle$ is a subring
  of~$A \langle \tau \rangle$.
\item If $\tau \in \cN_{F}$ (that is, if all faces in~$\tau$
  contain~$F$) then $A_{F} \subseteq A \langle \tau \rangle$.
\end{enumerate}

\medbreak

Fix $F \in \fS$. The rings $\lr{\tau}$ defined above fit into a
commutative diagram
\begin{equation*}
  E_F\colon \cN_F\rTo A_{F}\Mod \ ,\quad \tau
  \mapsto \lr{\tau}
\end{equation*}
of $A_{F}$-modules with structure maps given by inclusions. For
$F = S$, the diagram $E_{S}$ has a single entry indexed by the flag
$\{S\}$, and takes value $\lr{S} = R$ there. For $F = e_{l}$, the
left-hand edge of~$S$, the diagram $E_{e_{l}}$ looks like this:
\begin{equation}
\label{eq:E_el}
  {\R [y, y\inv] \powers{x} \atop \lr{e_l}} %
  \rTo[l>=3em] \relax %
  {\R [y, y\inv] \nov{x} \atop \lr{e_{l},S}} %
  \lTo[l>=3em] \relax %
  {\R [x, x\inv, y, y\inv] \atop \lr{S}}
\end{equation}
And finally, for $F = v_{bl}$ the bottom left vertex of~$S$, the
diagram $E_{v_{bl}}$ is depicted in Fig.~\ref{fig:Evbl}.

\begin{figure}[ht]
  \centering
  \begin{diagram}[small]
    {\lr{e_l} \atop \R[y,y\inv] \powers{x}} && \rTo && {\lr{e_l,S}
      \atop \R[y,y\inv] \nov{x} }&& \lTo && {\lr{S} \atop \R[x,x\inv,y,y\inv]}
    \\ %
    &&& \ldTo & &&& \ldTo(4,4)\\ %
    \dTo && {\lr{v_{bl}, e_{l}, S} \atop \R \nov{y} \nov{x}} && &&&& \dTo \\
    & \ruTo && \luTo \\%
    {\lr{v_{bl}, e_{l}} \atop \R \nov{y} \powers{x}} &&&& {\lr{v_{bl},
        S} \atop \R \nov{x,y}} &&&& {\lr{e_{b}, S} \atop \R[x,x\inv]
      \nov{y}} \\
    &&& \ruTo(4,4) & & \rdTo && \ldTo \\
    \uTo &&&& && {\lr{v_{bl}, e_{b}, S} \atop \R \nov{x} \nov{y}} && \uTo \\
    &&&& & \ruTo \\
    {\lr{v_{bl}} \atop \R \powers{x,y}} && \rTo && {\lr{v_{bl}, e_{b}}
      \atop \R \nov{x} \powers{y}} && \lTo && {\lr{e_{b}} \atop
      \R[x,x\inv] \powers{y}}
  \end{diagram}
  \caption{The diagram~$E_{v_{bl}}$}
  \label{fig:Evbl}
\end{figure}

We let $\Gamma_{\cN_{F}} (E_{F})$ denote the \textsc{\v Cech} complex
of~$E_{F}$. Considering $A_{F}$ as a chain complex concentrated in
chain level~$0$ we have a chain map
\begin{equation*}
  \xi^{F} \colon A_{F} \rTo \Gamma_{\cN_{F}} (E_{F})
\end{equation*}
given by the diagonal embedding
$A_{F} \rTo \bigoplus_{G \supseteq F} \lr{G}$. This is indeed a chain
map thanks to property~(DI3) of incidence numbers.

\begin{lemma}
  \label{lem:xiF_is_qi}
  The map $\xi^{F}$ is a quasi-isomorphism of chain complexes of
  $A_{F}$-modules.
\end{lemma}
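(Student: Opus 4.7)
The plan is a case analysis on $\dim F$. The case $F = S$ is trivial: $\cN_S$ consists of the single flag $\{S\}$, the \v Cech complex is $\lr{S}$ concentrated in degree~$0$, and $\xi^S$ is the identity of $A_S = R$.

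For $F$ an edge, by symmetry I treat $F = e_l$. The poset $\cN_{e_l}$ has the two rank-$0$ flags $\{e_l\}, \{S\}$ and the single rank-$1$ flag $\{e_l, S\}$, so the \v Cech complex is the two-term complex $\lr{e_l} \oplus \lr{S} \to \lr{e_l, S}$ with the second map $(u,v) \mapsto u - v$ induced by the two inclusions. The augmented sequence
\begin{equation*}
0 \rTo A_{e_l} \rTo^{\xi^{e_l}} \lr{e_l} \oplus \lr{S} \rTo \lr{e_l, S} \rTo 0
\end{equation*}
consists of $\bZ^2$-graded modules and degree-preserving maps, so exactness may be verified bidegree-by-bidegree. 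In every bidegree $(s,t) \in \bZ^2$ each summand is either $0$ or a copy of $R_{(s,t)}$, and exactness follows by direct inspection of the three possible membership cases for $(s,t)$ in the supports of $\lr{e_l} = \R[y, y\inv] \powers{x}$ and $\lr{S} = R$.

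For $F$ a vertex, by symmetry I treat $F = v_{bl}$. The \v Cech complex is a three-term chain complex whose chain modules in degrees $0, -1, -2$ are direct sums of the eleven rings $\lr{\tau}$ indexed respectively by the $4,\,5,\,2$ flags of $\cN_{v_{bl}}$ (see Figure~\ref{fig:Evbl}); the augmentation is by $A_{v_{bl}} = \R \powers{x, y}$. Again, everything is $\bZ^2$-graded and I check exactness in each bidegree $(s, t)$ separately. For each sign pattern of $(s, t)$ I determine which of the eleven rings contain $R_{(s, t)}$: for instance $\lr{v_{bl}} = \R \powers{x, y}$ contributes only when $s, t \geq 0$, while the six rings of the form $\lr{\tau}$ with $S \in \tau$ always contribute. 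In each bidegree the resulting complex is isomorphic to the cellular chain complex of the pair $(X, A_{s, t})$ tensored with $R_{(s, t)}$, where $X$ is the order complex of the poset $\mathrm{st}(v_{bl})$, topologically a $2$-disk made of the two triangles $\{v_{bl}, e_b, S\}$ and $\{v_{bl}, e_l, S\}$ glued along the edge $\{v_{bl}, S\}$, and $A_{s, t} \subseteq X$ is the sub-complex of those flags whose ring does \emph{not} contain $R_{(s, t)}$. The augmentation is nonzero precisely when $A_{s, t} = \emptyset$, i.e.\ when $s, t \geq 0$.

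I would then verify that $A_{s, t}$ is contractible (or empty) in every case: it is empty for $s, t \geq 0$; it is a $1$-simplex on the sub-poset $\{v_{bl}, e_b\}$ or $\{v_{bl}, e_l\}$ when exactly one of $s, t$ is negative; and it is the path graph $e_b$--$v_{bl}$--$e_l$ (the order complex of $\mathrm{st}(v_{bl}) \setminus \{S\}$) when both are negative. Since $X$ is itself contractible, the long exact sequence of the pair gives $H_*(X, A_{s, t}) = 0$ and $\tilde H_*(X) = 0$, yielding exactness of the augmented \v Cech complex in every bidegree. The main obstacle is the bookkeeping for the several sign subcases in the vertex case; the computation closely parallels the cellular argument used in the proof of Proposition~\ref{cechcomplex}, where the complement of a visibility subcomplex played the analogous role.
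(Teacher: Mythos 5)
Your approach matches the paper's in spirit: the paper disposes of the face and edge cases directly (noting the edge sequence is split exact) and delegates the vertex case to \cite{TWOV}, asserting that ``the claim can be checked in each degree separately''; your cellular elaboration of the vertex case — identifying the bidegree-$(s,t)$ piece of the \textsc{\v Cech} complex with the simplicial cochain complex of the order complex of $\mathrm{st}(v_{bl})$ relative to the subcomplex $A_{s,t}$ of flags missing $R_{(s,t)}$ — is a clean and correct way to carry out that degree-by-degree check, and it mirrors the cellular argument the paper itself uses for Proposition~\ref{cechcomplex}. One factual slip: the augmentation is by $A_{v_{bl}} = \R[T_{v_{bl}}] = \R[x,y]$, the polynomial subring, not by $\lr{v_{bl}} = \R\powers{x,y}$; happily this does not disturb the argument since both have the same degree-wise support (nonzero exactly when $s,t \geq 0$), so your description of when the augmentation contributes is still right. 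Similarly, there are really only two membership cases in the edge computation (since $\lr{S}=R$ and $\lr{e_l,S}$ both contain every $R_{(s,t)}$, only the sign of $s$ matters), and the \textsc{\v Cech} complex is indexed so that the differential raises rank, so it is the relative simplicial \emph{cochain} complex; but for bounded complexes of free modules the vanishing of homology and cohomology are equivalent, so your long-exact-sequence conclusion stands.
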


\begin{proof}
  Note that for $F = S$ there is actually nothing to show as the
  category $\cN_{S}$ has a single object; for $F = e_{l}$ the claim is
  equivalent to saying that the sequence
  \begin{multline*}
    0 \rTo \R[x,y,y\inv]
    \rTo^{\left(\begin{smallmatrix}1\\1\end{smallmatrix}\right)}
    \R[y,y\inv] \powers{x} \oplus \R[x,x\inv,y,y\inv] \\
    \noalign{\smallskip}
    \rTo^{(1 \ -1)} \R[y,y\inv] \nov{x} \rTo 0
  \end{multline*}
  is exact, and it is not too hard to see that it is actually split
  exact as a sequence of \(R_{(0,0)}\)-modules. The case of
  \(F = v_?\) a vertex is somewhat more demanding in terms of
  book-keeping.  Full details are worked out in the proof of
  Lemma~4.5.1 in~\cite{TWOV}, writing \(\R[x,x\inv]\nov{y}\) in place
  of \(R[x,x\inv]\nov{y}\) (and similar for the other rings); the
  proof carries over \textit{mutatis mutandis} since the claim can be
  checked in each degree separately.
\end{proof}

For faces $G \supsetneqq F$ let $E_{F}^{G}$ denote the
$\cN_{F}$\nbd-indexed diagram which agrees with $E_{G}$ on~$\cN_{G}$,
and is zero everywhere else. The obvious map of diagrams
$E_{F} \rTo E_{F}^{G}$, given by identities where possible, results in
a map of chain complexes
\begin{equation*}
  \Gamma_{\cN_{F}} (E_{F}) \rTo \Gamma_{\cN_{F}} (E_{F}^{G}) =
  \Gamma_{\cN_{G}} (E_{G}) \ ,
\end{equation*}
given by projecting away from all those summands in the source which
involve flags in $\cN_{F} \setminus \cN_{G}$. These maps are
functorial on the poset~$\fS$ so that there results a commutative
diagram
\begin{equation*}
  E \colon \fS \rTo R_{(0,0)}\Mod \ , \quad F \mapsto \Gamma_{\cN_{F}}
  (E_{F}) \ .
\end{equation*}

\begin{proposition}
\label{prop:xi_qi}
The maps~$\xi^{F}$ constructed above assemble to a natural
transformation
\begin{equation*}
  \xi \colon D(0) \rTo E \ .
\end{equation*}
Its components $\xi^{F} \colon A_{F} \rTo \Gamma_{\cN_{F}} (E_{F})$
are quasi-isomorphisms of complexes of $A_{F}$\nbd-modules.
\end{proposition}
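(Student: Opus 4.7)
The quasi-isomorphism assertion at the end of the statement is immediate from Lemma~\ref{lem:xiF_is_qi}, which treats precisely this question. The actual content still to be verified is therefore the naturality of the collection $\{\xi^{F}\}_{F \in \fS}$ with respect to the structure maps on both sides.

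The plan is to fix an inclusion of faces $F \subseteq G$ and examine the square whose left vertical is the structure map $A_{F} \hookrightarrow A_{G}$ of~$D(0)$ (an inclusion of subrings of~$R$, coming from $T_{F} \subseteq T_{G}$), and whose right vertical is the structure map $\Gamma_{\cN_{F}}(E_{F}) \rTo \Gamma_{\cN_{G}}(E_{G})$ of~$E$. By construction, the latter is the projection killing the summands indexed by flags belonging to $\cN_{F} \setminus \cN_{G}$; the description of~$E$ given just before the proposition already establishes functoriality on~$\fS$, so what remains is just the commutativity of this one square. Since $A_{F}$ and $A_{G}$ are concentrated in chain degree~$0$, and the right vertical is a chain map preserving chain degree, commutativity reduces to a check in chain degree~$0$.

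In that degree, the \v Cech modules are $\bigoplus_{H \supseteq F} \lr{H}$ and $\bigoplus_{H \supseteq G} \lr{H}$ respectively (using $H \in \cN_{G} \iff H \supseteq G$), the map $\xi^{F}$ is the diagonal embedding (legitimate because $A_{F} \subseteq \lr{H}$ whenever $F \subseteq H$), and the right vertical is projection onto those summands with $H \supseteq G$. Given $a \in A_{F}$, the top-right composite sends $a \mapsto (a)_{H \supseteq F} \mapsto (a)_{H \supseteq G}$, while the left-bottom composite sends $a \mapsto a \in A_{G} \mapsto (a)_{H \supseteq G}$; these manifestly agree, and the naturality square commutes.

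There is no substantive obstacle here: the proposition essentially packages a routine diagram chase (the commutativity check above) together with the already-established Lemma~\ref{lem:xiF_is_qi}. The only thing worth flagging is bookkeeping—one must be careful that the restriction map on \v Cech complexes really is the projection away from summands indexed by $\cN_{F} \setminus \cN_{G}$, but this is built into the way the functor~$E$ was defined.
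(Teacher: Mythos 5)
Your proof is correct and matches the paper's approach exactly: cite Lemma~\ref{lem:xiF_is_qi} for the quasi-isomorphism claim and verify naturality directly against the structure maps of $D(0)$ and $E$. The paper merely records the naturality as an observation without detail; your degree-zero computation of the two composites supplies precisely that detail.
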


\begin{proof}
  This is the content of Lemma~\ref{lem:xiF_is_qi}, together with the
  observation that the maps~$\xi^{F}$ are natural in~$F$ with respect
  to the structure maps in~$E$.
\end{proof}

Given a flat $A_{F}$-module $M_{F}$ we thus obtain a quasi-isomorphism
\begin{equation*}
  \chi^{F} \colon M_{F} \rTo^{\iso} M_{F} \tensor_{A_{F}} A_{F} % 
  \rTo^{\id \tensor \xi^{F}} M_{F} \tensor_{A_{F}} \Gamma_{\cN_{F}}(E_{F}) %
  \iso \Gamma_{\cN_{F}} (M_{F} \tensor_{A_{F}} E_{F}) \ ;
\end{equation*}
here $M_{F} \tensor_{A_{F}} E_{F}$ stands for the pointwise tensor
product of the module~$M_{F}$ with the entries of the
diagram~$E_{F}$. In other words, the sequence
\begin{multline}
  \label{eq:exact_sequence}
  0 \rTo M_{F} \rTo \Gamma_{\cN_{F}} (M_{F} \tensor_{A_{F}} E_{F})_{0}
  \rTo \Gamma_{\cN_{F}} (M_{F} \tensor_{A_{F}} E_{F})_{-1} \\
  \rTo \Gamma_{\cN_{F}} (M_{F} \tensor_{A_{F}} E_{F})_{-2} \rTo 0
\end{multline}
is exact.

For $M$ an arbitrary quasi-coherent diagram we obtain a natural
transformation
\begin{equation}
  \label{eq:nat_chi}
  \chi \colon M \rTo^{\iso} M \tensor_{D(0)} D(0) \rTo M
  \tensor_{D(0)} E \rTo^{\iso} M'
\end{equation}
where the target $M'$ is the diagram of chain complexes
\begin{equation*}
  M' \colon F \mapsto \Gamma_{\cN_{F}} (M_{F} \tensor_{A_{F}} E_{F}) \ ,
\end{equation*}
$M \tensor_{D(0)} D(0)$ denotes the pointwise tensor product
\begin{equation*}
  M \tensor_{D(0)} D(0) \colon F \mapsto M_{F} \tensor_{A_{F}}
  D(0)^{F} = M_{F} \tensor_{A_{F}} A_{F} \ ,
\end{equation*}
and $M \tensor_{D(0)} E$ stands similarly for the pointwise tensor
product of the diagrams~$M$ and~$E$.  If moreover $M$ is such that the
entry~$M_{F}$ is a flat $A_{F}$-module, the components of~$\chi$ are
quasi-isomorphisms.

\section{From trivial \textsc{Novikov} homology to finite domination}

We will now prove the ``if'' implication of Theorem~\ref{thm:main}. So
let $R$ be a strongly $\bZ^{2}$\nbd-graded ring. Suppose that $C$ is a
bounded complex of finitely generated free $R$-modules, and further
that the complexes listed in~\eqref{eq:cond_edge_new}
and~\eqref{eq:cond_vertex_new} are acyclic. In view of the assumed
freeness of~$C$, these eight complexes are then actually
contractible. They are of the form
\begin{equation*}
  C \tensor_{R} \lr{e, S} % \simeq 0 %
  \qquad \text{and} \qquad %
  C \tensor_{R} \lr{v, S} % \simeq 0
\end{equation*}
where $e$ and~$v$ denote an edge and a vertex of~$S$, respectively. As
tensor products preserve contractions, it follows that for all eight
maximal flags $v \subset e \subset S$, for $v$ a vertex of~$S$ and $e$
an edge incident to~$v$, the complex
\begin{equation}
  \label{eq:is_contractible_as_well}
  C \tensor_{R} \lr{v, e, S} \iso %
  C \tensor_{R} \lr{v, S} \tensor_{\lr{v, S}} \lr{v, e, S} \simeq 0
\end{equation}
is contractible (and hence acyclic) as well.

Extend the complex~$C$ to a complex of sheaves
$\cY \colon F \mapsto \cY_{F}$, which can be done by
Proposition~\ref{formingasheaf}. In more detail, this means that we
find a complex of sheaves such that $C$ is identified with $\cY^{S}$,
and such that there is a quasi-isomorphism
$\lim \cY \rTo \tot \, \Gamma_{\fS} (\cY)$ with $R_{(0,0)}$\nbd-finitely
dominated source (Corollary~\ref{findomiso}).

Let, for the moment, $F \neq \emptyset$ denote a fixed face of~$S$. We
observe that if $F \neq S$,
\begin{equation}
  \label{eq:contractible_1}
  \cY_{F} \tensor_{A_{F}} \lr {F,S} \iso \cY_{F} \tensor_{A_F} A_{S}
  \tensor_{A_{S}} \lr {F, S} \underset{(\dagger)}{\iso} C \tensor_{R} \lr
  {F,S} \simeq 0 \ . 
\end{equation}
The isomorphism labelled~($\dagger$) combines two facts: first, the entries
of the complex~$\cY$ are quasi-coherent diagrams
(Proposition~\ref{formingasheaf}) so that
$\cY_{F} \tensor_{A_F} A_{S} \iso \cY^{S}$; second, by construction
of~$\cY$ there is an identification of~$\cY^{S}$ with~$C$.

If $F = v$ is a vertex and $e \supseteq v$ an edge incident to~$e$, we
make use of \eqref{eq:is_contractible_as_well} to conclude similarly
\begin{equation}
  \label{eq:contractible_2}
  \cY_{v} \tensor_{A_{v}} \lr {v,e,S} \iso \cY_{v} \tensor_{A_v} A_{S}
  \tensor_{A_{S}} \lr {v, e, S} \iso C \tensor_{R} \lr
  {v, e, S} \simeq 0 \ . 
\end{equation}

For arbitrary~$F \neq \emptyset$, exactness of the
sequence~\eqref{eq:exact_sequence} implies that the double complex
\begin{equation*}
  \cY_{F} \rTo \Gamma_{\cN_{F}} ( \cY_{F} \tensor_{A_{F}} E_{F})
\end{equation*}
has exact rows (using the fact that $\cY_{F}$ consists of projective
$A_{F}$\nbd-modules), so there results a quasi-isomorphism
\begin{equation*}
  \tot(\chi^{F}) \colon \cY_{F} \rTo \tot \, \Gamma_{\cN_{F}} (
  \cY_{F} \tensor_{A_{F}} E_{F}) \ .
\end{equation*}

We will now make use of the fact that some entries of the diagram
$\cY_{F} \tensor_{A_{F}} E_{F}$ are known to be acyclic. Let $Z_{F}$
denote the $\cN_{F}$\nbd-indexed diagram which agrees with
$\cY_{F} \tensor_{A_{F}} E_{F}$ on those flags not containing~$S$, and
is zero elsewhere. Let $K_{F}$ denote the diagram which takes the
value~$C$ at~$\{S\}$, and is zero otherwise.  The obvious surjective
map $\cY_{F} \tensor_{A_{F}} E_{F} \rTo Z_{F} \oplus K_{F}$ is a
pointwise quasi-isomorphism, by \eqref{eq:contractible_1}
and~\eqref{eq:contractible_2}, and hence induces a quasi-isomorphism
\begin{equation*}
  \tot \, \Gamma_{\cN_{F}} ( \cY_{F} \tensor_{A_{F}} E_{F})
  \rTo^{\simeq} \tot \, \Gamma_{\cN_{F}} (Z_{F} \oplus K_{F})  \ .
\end{equation*}
We compute further that the target of this map is
\begin{equation*}
  \tot \, \Gamma_{\cN_{F}} (Z_{F}) \oplus \tot \Gamma_{\cN_{F}}
  (K_{F}) = \tot \, \Gamma_{\cN_{F}} (Z_{F}) \oplus C \ .
\end{equation*}
In total this yields a quasi-isomorphism
\begin{equation*}
  \Xi^{F} \colon \cY_{F} \rTo \tot \, \Gamma_{\cN_{F}} (Z_{F}) \oplus C \ .
\end{equation*}

Allowing $F$ to vary again, and making use of the naturality of the
constructions above, we see that we obtain a natural quasi-isomorphism
of diagrams
\begin{equation*}
  \Xi \colon \cY \rTo \tot \, \Gamma_{\cN_{(\,-\,)}} (Z_{(\,-\,)}) \oplus
  \mathrm{con}\,(C) \ ,
\end{equation*}
with the $\fS$\nbd-indexed diagram
\begin{gather*}
  \Gamma_{\cN_{(\,-\,)}} (Z_{(\,-\,)}) \colon F \mapsto \Gamma_{\cN_{F}}
  (Z_{F}) \\ %
  \intertext{and the constant diagram}
  \mathrm{con}\,(C) \colon F \mapsto C \ .
\end{gather*}
It follows that $\tot \, \Gamma_{\fS} (\cY)$, which is an
$R_{(0,0)}$\nbd-finitely dominated by Corollary~\ref{findomiso}, is
quasi-isomorphic to
\begin{equation*}
  \tot \, \Gamma_{\fS} \big( \tot \, \Gamma_{\cN_{(\,-\,)}} (Z_{(\,-\,)})
  \big) \oplus \tot \, \Gamma_{\fS} \big( \mathrm{con}\,(C) \big) \ .
\end{equation*}
The second summand is, in turn, quasi-isomorphic to~$C$. (In effect,
this is true since the nerve of~$\fS$ is contractible; for a more
explicit argument, observe that the double complex
\begin{equation*}
  \Gamma_{\fS} \big( \mathrm{con}\,(C) \big) \lTo C
\end{equation*}
concentrated in columns \(-2\), \(-1\), \(0\) and~\(1\) has acyclic
rows so that its totalisation is acyclic. But the totalisation is, up
to isomorphism, the mapping cone of the map
\(C \rTo \tot \, \Gamma_{\fS} \big( \mathrm{con}\,(C) \big)\) which is
thus a weak equivalence. See also \cite[Lemma~4.6.4]{TWOV}.)

Thus in the derived category of the ring~$R_{(0,0)}$ the complex $C$ is a
retract of $\tot \, \Gamma_{\fS} (\cY)$, and as both complexes are
bounded and consist of projective $R_{(0,0)}$\nbd-modules, this makes $C$
a retract up to homotopy of~$\tot \, \Gamma_{\fS} (\cY)$. As the
latter is $R_{(0,0)}$\nbd-finitely dominated so is~$C$, as was to be
shown.

\part{Finite domination implies triviality of \textsc{Novikov} homology.}
\label{part:finite-domin-impl}

\section{Algebraic tori}

\begin{definition}
  Let $R = \bigoplus_{\sigma \in \bZ^{2}} R_{\sigma}$ be a strongly
  $\bZ^2$-graded unital ring. Given an $R$-module~$M$ and an element
  $\rho\in\bZ^2$, we define the map of right $R$\nbd-modules
  \begin{equation*}
    \chi_\rho \colon M \tensor_{R_{(0,0)}} R \rTo %
    M \tensor_{R_{(0,0)}} R \ , \quad %
    m \tensor r \mapsto \sum_{j} m{u_j}\tensor{v_j}r \ ,
  \end{equation*}
  where $1 = \sum_{j} u_{j} v_{j}$ is any partition of unity of
  type~$(-\rho, \rho)$.
\end{definition}

The map $\chi_\rho$ is $R_{(0,0)}$\nbd-balanced and independent of the
choice of partition of unity as its restriction to $M
\tensor_{R_{(0,0)}} R_{\sigma}$ can be re-written as
\begin{equation*}
  M \tensor_{R_{(0,0)}} R_{\sigma} \rTo^{\iso}_{\mu_{-\rho,
      \sigma+\rho}} M \tensor_{R_{(0,0)}} 
  R_{-\rho} \tensor_{R_{(0,0)}} R_{\sigma+\rho} \rTo_{\omega} M
  \tensor_{R_{(0,0)}} R
\end{equation*}
with $\mu_{-\rho, \sigma+\rho}$ the $R_{(0,0)}$\nbd-bimodule
isomorphism from Proposition~\ref{prop:pi_mu}, and with
$\omega (m \tensor x \tensor y) = (mx) \tensor y$. --- In case $M=R$,
the map $\chi_{\rho}$ is an $R$\nbd-bimodule homomorphism.

Of course $\chi_{(0,0)} = \id$. As $\chi_{\rho}$ does not depend on
the choice of partition of unity, Lemma~\ref{lem:new_pou_from_old}
implies:

\begin{corollary}
  \label{cor:chi-commutes}
  For all $\rho, \sigma \in \bZ^{2}$ there are equalities of maps
  $\chi_\rho \chi_\sigma = \chi_{\rho+\sigma} = \chi_{\sigma}
  \chi_{\rho}$. The maps $\chi_{\rho}$ are isomorphisms of
  $R$\nbd-modules with $\chi_{\rho}\inv = \chi_{-\rho}$. \qed
\end{corollary}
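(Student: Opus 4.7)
The plan is to establish the composition law $\chi_\rho \chi_\sigma = \chi_{\rho+\sigma}$ by a direct computation, and then extract the remaining claims as formal consequences. The essential inputs are the independence of $\chi_\rho$ from the chosen partition of unity (noted in the paragraph preceding the corollary) and the combination procedure supplied by Lemma~\ref{lem:new_pou_from_old}.

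First, I would fix partitions of unity $1 = \sum_j u_j v_j$ of type $(-\rho, \rho)$ and $1 = \sum_k u'_k v'_k$ of type $(-\sigma, \sigma)$, and compute on a simple tensor:
\begin{equation*}
  \chi_\rho \chi_\sigma (m \tensor r) = \chi_\rho \Big( \sum_k m u'_k \tensor v'_k r \Big) = \sum_{k,j} m u'_k u_j \tensor v_j v'_k r \ .
\end{equation*}

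Next, Lemma~\ref{lem:new_pou_from_old} assembles the two given partitions into the partition of unity $1 = \sum_{k,j} (u'_k u_j)(v_j v'_k)$ of type $(-(\rho+\sigma), \rho+\sigma)$. Evaluating $\chi_{\rho+\sigma}(m \tensor r)$ with this particular partition reproduces precisely the expression above, so the independence of $\chi_{\rho+\sigma}$ from the choice of partition of unity yields $\chi_\rho \chi_\sigma = \chi_{\rho+\sigma}$. Interchanging the roles of $\rho$ and $\sigma$ (and using $\rho + \sigma = \sigma + \rho$) gives the second equality.

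Finally, specialising to $\sigma = -\rho$ and invoking $\chi_{(0,0)} = \id$ (already observed in the text) produces $\chi_\rho \chi_{-\rho} = \chi_{-\rho} \chi_\rho = \id$, so that $\chi_\rho$ is an $R$\nbd-module isomorphism with inverse $\chi_{-\rho}$. I do not anticipate any substantive obstacle; the argument is purely clerical once Lemma~\ref{lem:new_pou_from_old} and the independence property of $\chi_\rho$ are available.
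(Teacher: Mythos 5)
Your argument is correct and is precisely the one the paper has in mind: the corollary carries the \(\qed\) symbol because the preceding sentence (``$\chi_{(0,0)}=\id$; $\chi_\rho$ is independent of the chosen partition of unity; Lemma~\ref{lem:new_pou_from_old}'') already contains all the ingredients, and your computation simply writes out the telescoping of the two partitions of unity that those ingredients implicitly invoke. The only thing worth flagging is cosmetic: the combined partition you use, \(1=\sum_{k,j}(u'_k u_j)(v_j v'_k)\), is Lemma~\ref{lem:new_pou_from_old} with the roles of the two given partitions interchanged (equivalently, one may verify it directly by telescoping), which of course gives the same type \((-(\rho+\sigma),\rho+\sigma)\).
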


We can consider
\begin{equation*}
  M \tensor_{R_{(0,0)}} R = \bigoplus_{\rho \in \bZ^2} M
  \tensor_{R_{(0,0)}} R_{\rho}
\end{equation*}
as a \(\bZ^2\)-graded \(R\)-module. With respect to this grading, we
observe:

\begin{lemma}
  \label{lem:chi_homogeneous}
  The map \(\chi_{\rho}\) maps homogeneous elements of
  degree~\(\sigma\) to homogeneous elements of degree \(\rho +
  \sigma\). For the homogeneous element \(m \tensor r\) of
  degree~\(-\rho\), the formula \(\chi_{\rho} (m \tensor r) = mr
  \tensor 1\) holds. \qed
\end{lemma}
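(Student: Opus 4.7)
The plan is a direct computation using a partition of unity of type $(-\rho, \rho)$, say $1 = \sum_j u_j v_j$ with $u_j \in R_{-\rho}$ and $v_j \in R_\rho$ (which exists by Proposition~\ref{prop:characterisation_strongly_graded} applied to the strongly graded ring~$R$).

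For the first claim, take a homogeneous element $m \tensor r$ of degree~$\sigma$, which (by the stated grading) means we may assume $r \in R_{\sigma}$. Since $v_{j} \in R_{\rho}$, we have $v_{j} r \in R_{\rho} R_{\sigma} \subseteq R_{\rho + \sigma}$, so each summand $m u_{j} \tensor v_{j} r$ of $\chi_{\rho}(m \tensor r)$ lies in $M \tensor_{R_{(0,0)}} R_{\rho + \sigma}$, which is the homogeneous component of degree~$\rho + \sigma$. Hence $\chi_{\rho}(m \tensor r)$ itself is homogeneous of degree~$\rho+\sigma$.

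For the second claim, assume further that $\sigma = -\rho$, so that $r \in R_{-\rho}$. Then $v_{j} r \in R_{\rho} R_{-\rho} \subseteq R_{(0,0)}$, so $v_{j} r$ is a scalar that may be slid across the $R_{(0,0)}$\nbd-balanced tensor product:
\begin{equation*}
  \chi_{\rho}(m \tensor r) = \sum_{j} m u_{j} \tensor v_{j} r
  = \sum_{j} m u_{j} v_{j} r \tensor 1
  = m \Bigl( \sum_{j} u_{j} v_{j} \Bigr) r \tensor 1
  = mr \tensor 1 \ ,
\end{equation*}
using the defining identity of the partition of unity in the last step.

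There is no genuine obstacle here; the only point worth verifying is that the argument is independent of the choice of partition of unity, but this has already been addressed in the discussion preceding the lemma, where $\chi_{\rho}$ is shown to be well defined via $\mu_{-\rho, \sigma+\rho}$.
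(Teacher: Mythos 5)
Your proof is correct and is precisely the direct computation the paper has in mind; the paper marks this lemma with \qed because the verification is considered immediate, and your two-step argument (degree bookkeeping via $v_j r \in R_{\rho}R_{\sigma} \subseteq R_{\rho+\sigma}$, then sliding the $R_{(0,0)}$-element $v_j r$ across the balanced tensor product and collapsing $\sum_j u_j v_j = 1$) is exactly that omitted verification.

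One minor point of hygiene, not a gap: a general homogeneous element of $M \tensor_{R_{(0,0)}} R_\sigma$ is a finite sum of primitive tensors $m \tensor r$ with $r \in R_\sigma$, not a single one, so the reduction "we may assume $r \in R_\sigma$" implicitly uses additivity of $\chi_\rho$. Since $\chi_\rho$ is visibly additive this is harmless, and the second claim is anyway stated only for primitive tensors, so nothing is missing.
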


Let \(C\) be a chain complex of \(R\)-modules. Then \(\chi_{\rho}\),
applied in each chain level, defines a chain map
\begin{equation*}
  \chi_{\rho} \colon C \tensor_{R_{(0,0)}} R \rTo C
  \tensor_{R_{(0,0)}} R \ .
\end{equation*}
Let \(D\) be an additional \(R_{(0,0)}\)-chain complex, let
$\alpha \colon C \rTo D$ and $\beta \colon D\rTo C$ be
\(R_{(0,0)}\)-linear chain maps, and let
$H\colon\id_C\simeq \beta\alpha$ be a homotopy from \(\beta\alpha\)
to~\(\id\) such that $dH+Hd=\beta\alpha-\id_C$. Then the outer square
in the diagram \(T(\alpha, \beta; H)\)
\begin{equation*}
  \label{diag:T}
  \begin{diagram}[labelstyle=\scriptstyle]%[large]
    D \tensor_{R_{(0,0)}} R                & \rTo^{\scriptstyle
      \id^*-\alpha^*\chi_{e_1}\beta^*}     & D \tensor_{R_{(0,0)}} R                                                                                                           \\
    \dTo^{\id^*-\alpha^*\chi_{e_2}\beta^*} & \rdTo^{\alpha^*\chi_{e_1}H^*\chi_{e_2}\beta^*}_{-\alpha^*\chi_{e_2}H^*\chi_{e_1}\beta^*} & \dTo_{\id^*-\alpha^*\chi_{e_2}\beta^*} \\
    D \tensor_{R_{(0,0)}} R                & \rTo_{\id^*-\alpha^*\chi_{e_1}\beta^*}                                                   & 
    D \tensor_{R_{(0,0)}} R
  \end{diagram}
  \tag*{\(T(\alpha, \beta; H)\)}
\end{equation*}
(writing \(f^* = f \tensor \id\), for any map~\(f\)) is homotopy
commutative, with the diagonal arrow recording a preferred homotopy of
the two possible compositions. --- Note that if $\beta\alpha = \id_C$
we can choose $H=0$ and in this case the outer squares of
$T(\alpha, \beta; 0)$ commutes by Corollary~\ref{cor:chi-commutes}.

\medskip

\begin{definition}
  The \textit{algebraic torus} $\torb{\alpha}{\beta}{H}$ is defined as
  the totalisation of the homotopy commutative
  diagram~\(T(\alpha, \beta; H)\). That is,
  \(\torb{\alpha}{\beta}{H}\) is the complex with chain modules
  \begin{multline*}
    \torb{\alpha}{\beta}{H}_n = \\
    (D_{n-2}\tensor_{R_{(0,0)}} R )\oplus (D_{n-1}\tensor_{R_{(0,0)}} R) \oplus (D_{n-1}\tensor_{R_{(0,0)}} R )
    \oplus (D_{n}\tensor_{R_{(0,0)}} R)
  \end{multline*}
  and boundary $d_{\torb{\alpha}{\beta}{H}}$ given by the following matrix:
  \begin{align*}
    \begin{pmatrix}
      d^* & 0 & 0 & 0 \\
      \id^*-\alpha^*\chi_{e_1}\beta^* & -d^* & 0 & 0 \\
      \id^*-\alpha^*\chi_{e_2}\beta^* & 0 & -d^* & 0 \\
      \alpha^*(\chi_{e_1}H^*\chi_{e_2} - \chi_{e_2}H^*\chi_{e_1})\beta^* & \id^*-\alpha^*\chi_{e_2}\beta^* & -\id^*+\alpha^*\chi_{e_1}\beta^* & d^* \\
    \end{pmatrix}
  \end{align*}
\end{definition}

\section{Canonical resolutions}

Let \(C\) be a chain complex of \(R\)-modules.
The commutative diagram
\begin{equation*}
  \begin{diagram}[labelstyle=\scriptstyle]
    C \tensor_{R_{(0,0)}} R & \rTo^{\id^*-\chi_{e_1}} & C \tensor_{R_{(0,0)}} R \\
    \dTo^{\id^*-\chi_{e_2}} && \dTo_{\id^*-\chi_{e_2}} \\
    C \tensor_{R_{(0,0)}} R & \rTo^{\id^*-\chi_{e_1}} & C \tensor_{R_{(0,0)}} R
  \end{diagram}
  \tag*{\(T(\id_{C}, \id_{C}; 0)\)}
\end{equation*}
gives rise, \textit{via} totalisation, to the algebraic torus
\(\torb{\id_{C}}{\id_{C}}{0}\). The \(R\)-module structure map
\(\gamma \colon (x,r) \mapsto xr\) induces a map of \(R\)-module chain
complexes
\begin{equation*}
  \kappa \colon \torb{\id_{C}}{\id_{C}}{0} \rTo C \ .
\end{equation*}

\begin{proposition}[Canonical resolution]
  \label{prop:canonical-resolution}
  The map \(\kappa\) is a quasi-isomorphism. If \(C\) is a bounded
  below complex of projective \(R\)-modules then \(\kappa\) is a chain
  homotopy equivalence.
\end{proposition}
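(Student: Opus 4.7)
The plan is to verify that \(\kappa\) is a chain map, reduce the quasi-isomorphism claim to a module-level statement via a bicomplex, and then prove that module statement through a one-variable spectral sequence argument. The chain map property is immediate from the identity \(\gamma \circ \chi_{\rho} = \gamma\), which follows directly from \(\gamma(c \tensor r) = cr\) and the partition of unity relation \(\sum_{j} u_{j} v_{j} = 1\); the diagonal term in \(T(\id_{C}, \id_{C}; 0)\) vanishes because \(H = 0\). The mapping cone of \(\kappa\) is, up to a shift, the total complex of a bicomplex whose columns carry \(d_{C} \tensor \id_{R}\) and whose rows, at each chain degree \(n\) of~\(C\), form the augmented Koszul-type sequence
\begin{equation*}
  K(M) \colon\; 0 \to M \tensor_{R_{(0,0)}} R \xrightarrow{f_{2}} \bigl(M \tensor_{R_{(0,0)}} R\bigr)^{2} \xrightarrow{f_{1}} M \tensor_{R_{(0,0)}} R \xrightarrow{\gamma} M \to 0
\end{equation*}
with \(M = C_{n}\), \(f_{2}(a) = \bigl((\id - \chi_{e_{1}})a,\,(\id - \chi_{e_{2}})a\bigr)\) and \(f_{1}(a,b) = (\id - \chi_{e_{2}})a - (\id - \chi_{e_{1}})b\). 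Since this bicomplex is horizontally bounded, the column filtration spectral sequence converges and reduces the proof of quasi-isomorphism to the claim that \(K(M)\) is exact for every right \(R\)-module~\(M\).

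For the exactness of \(K(M)\) I would run the spectral sequence of the defining \(2 \times 2\) bicomplex, taking horizontal cohomology first. By Lemma~\ref{lem:chi_homogeneous} each \(\chi_{e_{i}}\) shifts the canonical \(\bZ^{2}\)-grading on \(M \tensor_{R_{(0,0)}} R = \bigoplus_{\sigma} M \tensor R_{\sigma}\) by~\(e_{i}\). A direct finite-support bookkeeping argument then shows that \(\id - \chi_{e_{1}}\) is injective and that the inclusion \(M \tensor_{R_{(0,0)}} R^{(0)} \hookrightarrow M \tensor_{R_{(0,0)}} R\) descends to an isomorphism onto \(\coker(\id - \chi_{e_{1}})\), where \(R^{(0)} = \bigoplus_{b \in \bZ} R_{(0,b)}\) is the strongly \(\bZ\)-graded subring attached to the slice \(\{0\} \times \bZ\) (strong grading follows from Proposition~\ref{prop:characterisation_strongly_graded}). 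On this cokernel the induced vertical differential is the analogous one-variable operator \(\id - \chi_{e_{2}}\) on \(M \tensor_{R_{(0,0)}} R^{(0)}\), whose kernel is zero and whose cokernel is \(M \tensor_{R_{(0,0)}} R_{(0,0)} = M\) via~\(\gamma\), by the identical argument applied inside~\(R^{(0)}\). Thus the \(E_{2}\)-page collapses to a single copy of~\(M\), and the augmented complex \(K(M)\) is exact.

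For the second part, when \(C\) is a bounded below complex of projective \(R\)-modules, each component \(R_{\sigma}\) is finitely generated projective over \(R_{(0,0)}\) by Corollary~\ref{cor:components_fgp}, so \(R\) is projective over \(R_{(0,0)}\). Consequently each \(C_{n}\) is projective over \(R_{(0,0)}\), and by tensor--hom adjunction \(C_{n} \tensor_{R_{(0,0)}} R\) is projective as a right \(R\)-module. The algebraic torus \(\T(\id_{C}, \id_{C}; 0)\) is therefore a bounded below complex of projective \(R\)-modules, and the quasi-isomorphism \(\kappa\) between two such complexes is automatically a chain homotopy equivalence. The main obstacle is the finite-support identification of \(\coker(\id - \chi_{e_{1}})\) with \(M \tensor_{R_{(0,0)}} R^{(0)}\): although \(\chi_{e_{1}}\) is an isomorphism of \(R_{(0,0)}\)-modules, it shifts the \(\bZ^{2}\)-grading, so establishing unique representability modulo \(\img(\id - \chi_{e_{1}})\) in the slice~\(R^{(0)}\) requires careful degree-by-degree analysis.
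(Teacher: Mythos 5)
Your proposal is correct, but it establishes the exactness of the row complex \(K(M)\) by a genuinely different route from the paper. Both arguments first reduce the quasi-isomorphism claim to the acyclicity of the total complex of the horizontally bounded bicomplex with rows \(K(C_n)\), so the real content lies in proving \(K(M)\) exact for an arbitrary right \(R\)-module \(M\). The paper (Lemma~\ref{lem:rows_exact}) does this by direct element-level computation: it verifies injectivity of \(\alpha\) and the identity \(\img\alpha=\ker\beta\) by finite-support/amplitude bookkeeping, and handles \(\img\beta=\ker\gamma\) by exhibiting an explicit \(R_{(0,0)}\)-linear map \(\phi\) with \(\beta\phi+\sigma\gamma=\id\), given by a four-case formula involving telescoping sums of the \(\chi_\rho\). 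You instead exhibit \(K(M)\), minus the augmentation, as the total complex of a \(2\times2\) Koszul bicomplex built from the commuting operators \(\id-\chi_{e_1}\) and \(\id-\chi_{e_2}\) (commutativity is Corollary~\ref{cor:chi-commutes}), and run the column-filtration spectral sequence: horizontal cohomology kills the kernel and identifies the cokernel of \(\id-\chi_{e_1}\) with the slice \(M\tensor_{R_{(0,0)}}R^{(0)}\) for \(R^{(0)}=\bigoplus_b R_{(0,b)}\) (a strongly \(\bZ\)-graded subring, by Proposition~\ref{prop:characterisation_strongly_graded}), then the induced vertical differential is the one-variable operator \(\id-\chi_{e_2}\) on that slice, with cokernel \(M\) via \(\gamma\). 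I checked the two delicate points you flag: \(\id-\chi_{e_1}\) is indeed injective on \(M\tensor R\), and the inclusion of the slice does descend to an isomorphism onto the cokernel, with surjectivity coming from \(m_{k,j}\equiv\chi_{-ke_1}(m_{k,j})\bmod\img(\id-\chi_{e_1})\) and uniqueness from the same finite-support argument that gives injectivity; the restriction of \(\chi_{e_2}\) to the slice agrees with the one-variable \(\chi\) because a partition of unity of type \((-e_2,e_2)\) already lives in \(R^{(0)}\). The trade-off: the paper's explicit contraction \(\phi\) avoids spectral sequence machinery at the price of a rather intricate case analysis, while your reduction to the one-variable (\textsc{Wang}-type) exact sequence is conceptually cleaner and makes the ``Koszul-in-two-variables'' structure visible, at the price of justifying the cokernel identification by essentially the same kind of degree bookkeeping. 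Your treatment of the second part (projectivity of \(R\) over \(R_{(0,0)}\) via Corollary~\ref{cor:components_fgp}, hence projectivity of the torus, hence the quasi-isomorphism upgrades to a homotopy equivalence for bounded-below projective complexes) is the standard argument and matches what the paper leaves implicit.
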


In preparation of the proof, we note that
\(\torb{\id_{C}}{\id_{C}}{0}\) can be described as the totalisation
(in the usual sense) of a double complex of the form
\begin{equation}
  \label{eq:can-res-R}
  C \tensor_{R_{(0,0)}} R \rTo %
  \Big( C \tensor_{R_{(0,0)}} R \Big)^2 \rTo %
  C \tensor_{R_{(0,0)}} R \ .
\end{equation}
The \(R\)-module structure map \(\gamma \colon (x,r) \mapsto xr\)
of~\(C\) can be used to augment this to the \(R\)-module double complex
\begin{equation}
  \label{eq:can-res-R-augmented}
  0 \rTo C \tensor_{R_{(0,0)}} R \rTo %
  \Big( C \tensor_{R_{(0,0)}} R \Big)^2 \rTo %
  C \tensor_{R_{(0,0)}} R \rTo^{\gamma} C \rTo 0 \ .
\end{equation}
We will consider a single row of this double complex, that is, a
sequence of the form
\begin{equation}
  \label{eq:can-res-row}
  0 \rTo M \tensor_{R_{(0,0)}} R \rTo^{\alpha} %
  \Big( M \tensor_{R_{(0,0)}} R \Big)^2 \rTo^{\beta} %
  M \tensor_{R_{(0,0)}} R \rTo^{\gamma} M \rTo 0
\end{equation}
where \(M\) is a right \(R\)-module.   Here the maps $\alpha$ and~$\beta$ are given by the matrices
\begin{equation*}
  \alpha =
  \begin{pmatrix}
    \id - \chi_{e_1} \\ %
    \id - \chi_{e_2}
  \end{pmatrix}
  \quad \text{and} \quad
  \beta =
  \begin{pmatrix}
    \id - \chi_{e_2} & -(\id - \chi_{e_1})
  \end{pmatrix}
  \ .
\end{equation*}

\begin{lemma}
  \label{lem:rows_exact}
  The complex~\eqref{eq:can-res-row} is exact.
\end{lemma}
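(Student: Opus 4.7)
The plan is to reduce the exactness statement to a standard Koszul computation by using the $\chi$\nbd-maps as a change of variables. Set $X = M \tensor_{R_{(0,0)}} R$, so that $X = \bigoplus_{\sigma \in \bZ^{2}} X_{\sigma}$ with $X_{\sigma} = M \tensor_{R_{(0,0)}} R_{\sigma}$. Combining Corollary~\ref{cor:chi-commutes} and Lemma~\ref{lem:chi_homogeneous}, the restriction of $\chi_{\sigma}$ to $X_{0}$ is an isomorphism $X_{0} \rTo X_{\sigma}$ of right $R_{(0,0)}$\nbd-modules, with inverse induced by $\chi_{-\sigma}$. The canonical identification $X_{0} \iso M$ together with the family of these restrictions then yields a right $R_{(0,0)}$\nbd-module isomorphism
\[
  \Phi \colon M \tensor_{\bZ} \bZ[\bZ^{2}] \rTo^{\iso} X
\]
sending the summand labelled by $\sigma$ onto $X_{\sigma}$ via $\chi_{\sigma}$. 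Corollary~\ref{cor:chi-commutes} shows that $\Phi$ intertwines $\chi_{e_{i}}$ on $X$ with the shift operator $T_{i}$ given by the action of $e_{i}$ on $\bZ[\bZ^{2}]$.

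The key additional ingredient is to identify $\gamma$ with the augmentation map on the group ring side. For $m \in M$ and any partition of unity $1 = \sum_{j} u_{j} v_{j}$ of type $(-\sigma,\sigma)$ one computes
\[
  \gamma\bigl( \chi_{\sigma}(m \tensor 1) \bigr) = \gamma\Big(\sum_{j} m u_{j} \tensor v_{j}\Big) = \sum_{j} m u_{j} v_{j} = m \ ,
\]
so $\gamma \circ \chi_{\sigma} = \id_{M}$ for every $\sigma \in \bZ^{2}$. Therefore $\gamma \circ \Phi$ is the sum-over-$\sigma$ map, which agrees with $\id_{M} \tensor \epsilon$ for the augmentation $\epsilon \colon \bZ[\bZ^{2}] \rTo \bZ$ that sends every group element to~$1$.

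Under the translation provided by~$\Phi$, the complex in question becomes $M \tensor_{\bZ}$ applied to the classical augmented Koszul complex
\[
  0 \rTo \bZ[\bZ^{2}] \rTo \bZ[\bZ^{2}]^{2} \rTo \bZ[\bZ^{2}] \rTo^{\epsilon} \bZ \rTo 0
\]
associated to the regular sequence $(1 - T_{1}, 1 - T_{2})$ in $\bZ[\bZ^{2}] = \bZ[T_{1}^{\pm 1}, T_{2}^{\pm 1}]$. This Koszul complex is exact, and because it is a bounded acyclic complex of free (hence projective) $\bZ$\nbd-modules it is chain-contractible over~$\bZ$. Tensoring with $M$ over~$\bZ$ preserves the contraction, yielding exactness of the original complex.

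The main obstacle is not the final Koszul step, which is standard, but the bookkeeping needed to set up $\Phi$ correctly. One must verify that $\Phi$ is a genuine right $R_{(0,0)}$\nbd-module isomorphism, that it intertwines $\chi_{e_{i}}$ with $T_{i}$, and that it identifies $\gamma$ with $\id_{M}\tensor\epsilon$. Each such verification, however, reduces to a short computation with a partition of unity, using Corollary~\ref{cor:chi-commutes} and Lemma~\ref{lem:chi_homogeneous}.
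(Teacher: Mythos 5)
Your argument is correct, and it takes a genuinely different route from the paper's. The paper verifies exactness by direct computation at each spot: surjectivity of~$\gamma$ with an explicit section, injectivity of~$\alpha$ via a grading/support argument, $\img\alpha = \ker\beta$ by a reduction-of-amplitude induction, and $\img\beta = \ker\gamma$ by writing down an explicit splitting homotopy~$\phi$ (defined by four cases according to the sign of the components of the degree~$\rho$) satisfying $\beta\phi + \sigma\gamma = \id$. Your approach instead builds a change-of-basis isomorphism $\Phi \colon M \tensor_{\bZ} \bZ[\bZ^{2}] \to M \tensor_{R_{(0,0)}} R$ out of the maps $\chi_{\sigma}|_{X_{0}}$ (using Corollary~\ref{cor:chi-commutes} and Lemma~\ref{lem:chi_homogeneous} to see each restriction is a right $R_{(0,0)}$\nbd-module isomorphism onto~$X_{\sigma}$), checks it intertwines $\chi_{e_{i}}$ with the group-ring shift~$T_{i}$ and $\gamma$ with $\id_{M}\tensor\epsilon$, and then the whole complex becomes $M \tensor_{\bZ}$ applied to the augmented Koszul complex of the regular sequence $(1-T_{1},\,1-T_{2})$ in $\bZ[T_{1}^{\pm1}, T_{2}^{\pm1}]$, which is split exact over~$\bZ$ since it is a bounded exact complex of free abelian groups. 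Your reduction is more conceptual and shorter, and it makes transparent \emph{why} the complex is exact: the $\chi$\nbd-maps let $M \tensor_{R_{(0,0)}} R$ play the role of $M \tensor_{\bZ}\bZ[\bZ^{2}]$ for the purposes of this resolution, so one is just looking at a classical Koszul computation in disguise. What the paper's approach buys is self-containedness (no external appeal to Koszul theory or regular sequences) and explicit formulas; in fact the paper's $\phi$ can be viewed, after unwinding, as the transfer through $\Phi$ of the standard contraction of the Koszul complex, which is an instructive observation in itself.
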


\begin{proof}
  We allow \(M\) to be an arbitrary \(R_{(0,0)}\)-module initially.
  We have the direct sum decomposition
  \(M \tensor_{R_{(0,0)}} R = \bigoplus_{(i,j)} M \tensor_{R_{(0,0)}}
  R_{(i,j)}\); in fact, \(M \tensor_{R_{(0,0)}} R\) is a
  \(\bZ^2\)-graded \(R\)-module in this way. Any element
  $z \in M \tensor_{R_{(0,0)}} R$ can be expressed uniquely in the
  form
  \begin{equation*}
    z=\sum_{i,j \in \bZ} m_{i,j} \quad \text{where } m_{i,j} \in M \tensor_{R_{(0,0)}} R_{(i,j)} \ ,
  \end{equation*}
  such that $m_{i,j} = 0$ for almost all pairs~$(i,j)$. We say that
  {\it $z$~has $x$\nbd-amplitude in the interval~$[a,b]$\/} if
  $m_{i,j} =0$ for $i \notin [a,b]$.
  % and that {\it $z$~has
  %   $y$\nbd-amplitude in the interval~$[a,b]$\/} if $m_{i,j} =0$ for
  % $j \notin [a,b]$. 
  The {\it support of~$z$\/} is the (finite) set of
  all pairs~$(i,j)$ with $m_{i,j} \neq 0$.

  \smallskip

  After these initial comments, we proceed to verify exactness of the
  sequence.  We remark first that $\gamma$ is surjective since
  $\gamma(p \otimes 1)=p$. As a matter of fact,
  \(\sigma(p) = p \otimes 1\) defines an \(R_{(0,0)}\)-linear
  section~\(\sigma\) of~\(\gamma\).
  
  \smallskip

  Next, we show that $\alpha$ is injective. Let
  \(z=\sum_{i,j \in \bZ} m_{i,j}\) be an element of
  \(\ker(\alpha)\). Since
  \(\chi_{e_2}(m_{i,j}) \in M \tensor_{R_{(0,0)}} R_{(i,j+1)}\), the
  equality
  \begin{equation*}
    \alpha(z)=
    \begin{pmatrix}
      \displaystyle\sum_{i,j \in \bZ} \big(m_{i,j} - \chi_{e_1}(m_{i,j}) \big) \\[3.5ex]
      \displaystyle\sum_{i,j \in \bZ} \big(m_{i,j} - \chi_{e_2}(m_{i,j}) \big)
    \end{pmatrix}
    = 0
  \end{equation*}
  implies, by considering the homogeneous component of degree~\((i,j)\)
  of the second entry, that \(m_{i,j} - \chi_{e_2} (m_{i,j-1}) = 0\).
  If \(z \neq 0\) there exists \((i,j)\) in the support of~\(z\) such
  that \(m_{i,j-1} = 0\). For these indices we have
  \(m_{i,j} = \chi_{e_2} (m_{i,j-1}) = \chi_{e_2}(0) = 0\), a
  contradiction. This enforces \(z=0\) whence \(\alpha\) is injective.

  \smallskip

  To show that $\mathrm{im}\,\alpha = \ker\beta$ we will take an
  element~$(z_1,z_2)$ of $\ker\beta$ and show that it can be reduced
  to~$0$ by subtracting a sequence of elements of
  $\mathrm{im}\,\alpha \subseteq \ker\beta$. This clearly implies that
  $(z_1,z_2) \in \mathrm{im}\,\alpha$ as required.

  So let $(z_1,z_2) \in \ker\beta$, where
  \begin{equation*}
  z_1 = \sum_{i,j \in \bZ} m_{i,j} \quad \text{and} %
  \quad z_2 = \sum_{i,j \in \bZ} n_{i,j} \ , \quad %
  \text{with } m_{i,j}, n_{i,j} \in M \tensor_{R_{(0,0)}} R_{(i,j)} \ .
  \end{equation*}
  Choose integers $a$, $b$ and~$k$ such that $z_1$ has \(x\)-amplitude
  in~\([a,k]\) and \(z_2\) has \(x\)-amplitude in~\([a,b]\). If
  $k > a$ we define
  \begin{equation*}
    u = \sum_{j \in \bZ} \chi_{-e_1} (m_{k,j}) %
    \in M \tensor_{R_{(0,0)}} R \ , \ \ \text{with } %
    \chi_{-e_1} (m_{k,j}) \in M \tensor_{R_{(0,0)}} R_{(k-1,j)} \ ,
  \end{equation*}
  and set $(z_1',z_2') = (z_1,z_2) - \alpha(u)$. The
  \((k,j)\)-homogeneous component of \(z_1'\) vanishes by construction
  of~\(u\) and Corollary~\ref{cor:chi-commutes}, so that $z_1'$ has
  $x$\nbd-amplitude in $[a,k-1]$ while the \(x\)-amplitude of~\(z_2\)
  remains in~\([a,b]\). The element $(z_1',z_2')$ will be in
  $\mathrm{im}\,\alpha$ if and only if
  $(z_1,z_2) \in \mathrm{im}\,\alpha$.

  By iteration, we may thus assume that our initial pair $(z_1,z_2)$
  is such that the $x$\nbd-amplitude of~$z_1$ is in $\{a\} = [a,a]$.
  % Then $\beta\big((z_1,0)\big)$ will also have $x$-amplitude
  % in $\{a\}$.
  This actually necessitates \(z_2=0\). To see this, assume
  \(z_2 \neq 0\). We can then choose \((i,j)\) in the support
  of~\(z_2\) (so that in particular \(i \geq a\)) such that
  \((i+1,j)\) is not in the support of~\(z_2\), \ie, such that
  \(n_{i+1,j} = 0\). Since \(\beta (z_1,z_2) = 0\) we have,
  by considering the homogeneous component of degree~\((i+1,j)\),
  the equality
  \begin{equation*}
    m_{i+1,j} - \chi_{e_2} (m_{i+1, j-1}) - n_{i+1, j} + \chi_{e_1}
    (n_{i,j}) = 0 \ .
  \end{equation*}
  But the first three terms vanish, by choice of~\(i\) and our
  hypothesis on~\(z_1\), so that \(\chi_{e_1} (n_{i,j}) = 0\) whence
  \(n_{i,j} = 0\), contradicting the choice of~\(i\).
  
  Thus \(z_2=0\). This in turn implies that $\beta(z_1,0) = 0$ so that
  $m_{a,\ell} - \chi_{e_2} (m_{a,\ell-1}) = 0$ for any~\(\ell\). If
  $z_1$ is non-zero, we let $\ell$ be minimal with
  $m_{a, \ell} \neq 0$. Then
  $m_{a,\ell} = \chi_{e_2}(m_{a, \ell-1}) = \chi_{e_2}(0) = 0$, a
  contradiction. We conclude that
  $(z_1,z_2) = 0 \in \mathrm{im}\,\alpha$ as required.

  \smallskip

  To show $\mathrm{im}\,\beta = \ker \gamma$ it is enough to produce an
  \(R_{(0,0)}\)-linear map
  \begin{equation*}
    \phi \colon M \tensor_{R_{(0,0)}} R \rTo %
    \Big( M \tensor_{R_{(0,0)}} R \Big)^2
  \end{equation*}
  such that \(\beta \phi + \sigma \gamma = \id\), where \(\sigma\) is
  the section of~\(\gamma\) given by \(\sigma(p) = p \tensor 1\). It is
  enough to define~\(\phi\) on homogeneous primitive tensors
  \(x = m \tensor r\) with \(r \in R_{\rho}\), for the various
  \(\rho \in \bZ^2\).
  \begin{itemize}
  \item If \(a,b \geq 0\):
    \begin{equation*}
      \phi(x) = %
      \Big( - \sum_{k=1}^{b} \chi_{(0,-k)} (x) \qquad %
      \sum_{\ell=1}^{a} \chi_{(-\ell, -b)} (x) \Big) %
    \end{equation*}
  \item If \(a,b < 0\):
    \begin{equation*}
      \phi(x) = %
      \Big( \sum_{k=0}^{|b|-1} \chi_{(0,k)} (x) \qquad %
      - \sum_{\ell=0}^{|a|-1} \chi_{(\ell, |b|)} (x) \Big)%
    \end{equation*}
  \item If \(a< 0\) and \(b \geq 0\):
    \begin{equation*}
      \phi(x) = %
      \Big( - \sum_{k=1}^{b} \chi_{(0,-k)} (x) \qquad %
      - \sum_{\ell=0}^{|a|-1} \chi_{(\ell, |b|)} (x) \Big)%
    \end{equation*}
  \item If \(a \geq 0\) and \(b < 0\):
    \begin{equation*}
      \phi(x) = %
      \Big( \sum_{k=0}^{|b|-1} \chi_{(0,k)} (x) \qquad %
      \sum_{\ell=1}^{a} \chi_{(-\ell, -b)} (x) \Big) %
    \end{equation*}
  \end{itemize}
  In view of Corollary~\ref{cor:chi-commutes}, computing
  \(\beta \phi(m \tensor r)\), for \(r \in R\) a homogeneous element
  of degree~\(\rho\), results in a telescoping sum simplifying to
  \(m \tensor r - \chi_{-\rho} (m \tensor r)\). But
  \(\chi_{-\rho} (m \tensor r) = mr \tensor 1\) by
  Lemma~\ref{lem:chi_homogeneous} whence
  \(\chi_{-\rho} (m \tensor r) = \sigma \gamma (m \tensor r)\) so
  \(\beta \phi + \sigma \gamma = \id\) as required.
\end{proof}

\begin{proof}[Proof of Proposition~\ref{prop:canonical-resolution}]
  The map~\(\kappa\) is a quasi-isomorphism if and only if its mapping
  cone is acyclic. But this mapping cone is precisely the totalisation
  of the double complex~\eqref{eq:can-res-R-augmented}. As it is
  concentrated in a finite vertical strip, and as its rows are acyclic
  by Lemma~\ref{lem:rows_exact}, the totalisation is acyclic by
  standard results for double complexes.
\end{proof}

\section{The Mather trick}

Let $C$ be a chain complex of \(R\)-modules, and let \(D\) be a chain
complex of \(R_{(0,0)}\)-modules.  Suppose we have
\(R_{(0,0)}\)-linear chain maps $\alpha \colon C \rTo D$ and
$\beta \colon D \rTo C$ and a chain homotopy
$H \colon \id_C \simeq \beta \alpha$ such that
$dH+Hd=\beta\alpha-\id_C$. A calculation shows:

\begin{lemma}
  \label{lem:Mather-map}
  The matrix
  \begin{equation*}
      \begin{pmatrix}
        \alpha^* & 0 & 0 & 0 \\
        - \alpha^* \mu_1 H^*& \alpha^* & 0 & 0 \\
        - \alpha^* \mu_2 H^*& 0 & \alpha^* & 0 \\
        K&  \alpha^* \mu_2 H^*& - \alpha^* \mu_1 H^* & \alpha^*
      \end{pmatrix}
    \ ,
  \end{equation*}
  where we abbreviate
  \begin{equation*}
    K = \alpha^*(\mu_1 H^*\mu_2 - \mu_2 H^* \mu_1) H^*\colon C_n\tensor
    R\rTo D_{n+2} \tensor R \ ,
  \end{equation*}
  defines a chain map of algebraic tori
  \begin{equation*}
    \lambda \colon \torb{\id_C}{\id_C}{0} \rTo \torb{\alpha}{\beta}{H}
    \ .
    \tag*{\qedsymbol}
  \end{equation*}
\end{lemma}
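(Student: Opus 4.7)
The plan is to prove the lemma by direct computation of the matrix equation $d_{2} \lambda = \lambda d_{1}$, where $d_{1}$ denotes the differential of $\torb{\id_C}{\id_C}{0}$ and $d_{2}$ that of $\torb{\alpha}{\beta}{H}$; since both differentials and $\lambda$ are $4 \times 4$ block matrices of $R_{(0,0)}$-linear maps, the task reduces to verifying sixteen entry-wise equalities of maps of the form $C_{?} \tensor_{R_{(0,0)}} R \to D_{?} \tensor_{R_{(0,0)}} R$.

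Before starting, I would record the elementary identities used throughout: $\alpha^{*} d^{*} = d^{*} \alpha^{*}$ and $\beta^{*} d^{*} = d^{*} \beta^{*}$ since $\alpha$ and $\beta$ are chain maps; $\mu_{i} d^{*} = d^{*} \mu_{i}$ (writing $\mu_{i} = \chi_{e_{i}}$) because $d$ is $R$\nbd-linear, so $d^{*}$ acts only on the first tensor factor while $\mu_{i}$ is defined in an $R_{(0,0)}$-balanced way; $\mu_{1} \mu_{2} = \mu_{2} \mu_{1}$ by Corollary~\ref{cor:chi-commutes}; and the chain-homotopy hypothesis rewritten as $d^{*} H^{*} + H^{*} d^{*} = \beta^{*} \alpha^{*} - \id^{*}$. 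It is important to note that $\alpha^{*}$, $\beta^{*}$, and $H^{*}$ do \emph{not} commute with the $\mu_{i}$ in general, since they are only $R_{(0,0)}$\nbd-linear on the first factor; the order of composition matters throughout. With these tools, twelve of the sixteen entries are either tautological (the diagonal identity $d^{*} \alpha^{*} = \alpha^{*} d^{*}$ occurring four times) or reduce to a single application of the homotopy identity---for instance, the $(2,1)$-entry of $d_{2} \lambda - \lambda d_{1}$ collapses to $-\alpha^{*} \mu_{1} \beta^{*} \alpha^{*} + \alpha^{*} \mu_{1} (d^{*} H^{*} + H^{*} d^{*}) + \alpha^{*} \mu_{1}$, which vanishes upon substituting $\beta^{*} \alpha^{*} - \id^{*}$ for the middle factor; the $(3,1)$, $(4,2)$, and $(4,3)$ entries are handled identically.

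The sole genuine obstacle is the $(4,1)$-entry, and this is precisely what the elaborate term $K$ is designed to handle. Abbreviating $\Theta = \alpha^{*}(\mu_{1} H^{*} \mu_{2} - \mu_{2} H^{*} \mu_{1})$, so that $K = \Theta H^{*}$ and the corner entry of $d_{2}$ equals $\Theta \beta^{*}$, I would first derive the auxiliary identity $d^{*} \Theta + \Theta d^{*} = \alpha^{*}(\mu_{1} \beta^{*} \alpha^{*} \mu_{2} - \mu_{2} \beta^{*} \alpha^{*} \mu_{1})$ by substituting $d^{*} H^{*} = \beta^{*} \alpha^{*} - \id^{*} - H^{*} d^{*}$ and exploiting $d^{*} \mu_{i} = \mu_{i} d^{*}$ together with $\mu_{1} \mu_{2} = \mu_{2} \mu_{1}$. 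This identity transforms $d^{*} K - K d^{*}$ into $\alpha^{*}(\mu_{1} \beta^{*} \alpha^{*} \mu_{2} - \mu_{2} \beta^{*} \alpha^{*} \mu_{1}) H^{*} - \Theta \beta^{*} \alpha^{*} + \Theta$. When this is combined with the other contributions in the $(4,1)$-entry of $d_{2} \lambda - \lambda d_{1}$, every term involving $\beta^{*} \alpha^{*}$ cancels in pairs, leaving exactly $\Theta + \alpha^{*}(\mu_{2} H^{*} \mu_{1} - \mu_{1} H^{*} \mu_{2}) = 0$ as required. This cancellation is the heart of the Mather-trick construction; everything else is routine bookkeeping.
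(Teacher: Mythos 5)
Your proof is correct and takes precisely the approach the paper intends (the paper's ``proof'' is the single phrase ``A calculation shows,'' so the entry-wise verification of $d_2\lambda = \lambda d_1$ is exactly what is asked of the reader). Your identification $\mu_i = \chi_{e_i}$ is forced by the requirement that the matrix be compatible with the torus differentials and is clearly what the paper means; your inventory of the commutation rules ($d^*$ with $\alpha^*$, $\beta^*$ and with the $\chi_{e_i}$, but \emph{not} $\alpha^*,\beta^*,H^*$ with $\chi_{e_i}$) is the essential point of care; the reduction of each off-diagonal entry apart from the $(4,1)$-corner to a single substitution of $d^*H^* + H^*d^* = \beta^*\alpha^* - \id^*$ is right; and your auxiliary identity $d^*\Theta + \Theta d^* = \alpha^*(\mu_1\beta^*\alpha^*\mu_2 - \mu_2\beta^*\alpha^*\mu_1)$, obtained via $\mu_1\mu_2 = \mu_2\mu_1$, together with the telescoping substitution $d^*K - Kd^* = (d^*\Theta + \Theta d^*)H^* - \Theta(d^*H^*+H^*d^*)$, is the correct key to the corner entry; I have checked that all $\beta^*\alpha^*$-terms cancel in pairs exactly as you claim. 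Nothing is missing.
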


\begin{theorem}[{\textsc{Mather} trick}]
  \label{thm:Mather-trick}
  If the map \(\alpha\) is a quasi-isomorphism then \(C\) is
  quasi-isomorphic to
  \(\torb{\alpha}{\beta}{H}\). More precisely, the maps
  \begin{equation*}
    C \lTo^{\kappa} \torb{\id_C}{\id_C}{0} \rTo^{\lambda}
    \torb{\alpha}{\beta}{H}
  \end{equation*}
  are quasi-isomorphism.
\end{theorem}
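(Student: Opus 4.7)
The plan is to verify the two quasi-isomorphism claims separately. The statement that $\kappa$ is a quasi-isomorphism is precisely the content of Proposition~\ref{prop:canonical-resolution} applied to~$C$, so no further work is required there.

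For~$\lambda$, the essential input is that $\alpha^{*} = \alpha \tensor \id_{R}$ is a quasi-isomorphism from $C \tensor_{R_{(0,0)}} R$ to $D \tensor_{R_{(0,0)}} R$. This combines the hypothesis on~$\alpha$ with the flatness of~$R$ as a left $R_{(0,0)}$-module, which is immediate from Corollary~\ref{cor:components_fgp}: each homogeneous component $R_{\rho}$ is finitely generated projective over $R_{(0,0)}$, and $R$ is their direct sum.

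With this in hand, I would exploit the block lower-triangular structure visible both in the differentials of the two algebraic tori and in the matrix of Lemma~\ref{lem:Mather-map} describing~$\lambda$. In each chain degree the source and target of~$\lambda$ decompose into four blocks corresponding to the four vertices of the square~$T$; the differentials are block lower triangular with diagonal blocks $\pm d^{*}$, while $\lambda$ is block lower triangular with diagonal blocks equal to (shifted) copies of~$\alpha^{*}$. Define a length-four filtration $F_{\bullet}$ of both tori by requiring the first $p$ blocks to vanish; this is a filtration by subcomplexes preserved by~$\lambda$, and the induced map on the associated graded is a direct sum of shifted copies of~$\alpha^{*}$, hence a quasi-isomorphism.

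A short induction based on the five lemma applied to the short exact sequences $0 \to F_{p+1} \to F_{p} \to F_{p}/F_{p+1} \to 0$ (valid because the filtration has length four) then upgrades this to the statement that~$\lambda$ itself is a quasi-isomorphism. No real obstacle arises: the only non-formal ingredient is the flatness of~$R$ over $R_{(0,0)}$, and once the lower-triangular pattern of both the differentials and of~$\lambda$ is recognised, the remainder is standard homological algebra.
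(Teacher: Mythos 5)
Your proposal is correct and follows the same path as the paper: $\kappa$ is a quasi-isomorphism by Proposition~\ref{prop:canonical-resolution}, and $\lambda$ is a quasi-isomorphism because its matrix from Lemma~\ref{lem:Mather-map} is block lower triangular with diagonal entries $\alpha^{*}$, which in turn is a quasi-isomorphism since $R$ is projective (hence flat) over $R_{(0,0)}$. The paper simply states the lower-triangular observation and concludes; your filtration-plus-five-lemma argument is the standard way to make that one-liner rigorous, so you have just filled in the detail the authors leave implicit.
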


\begin{proof}
  The map~\(\kappa\) is a quasi-isomorphism by
  Lemma~\ref{prop:canonical-resolution}. The map~\(\lambda\), defined
  in Lemma~\ref{lem:Mather-map}, is a quasi-isomorphism if \(\alpha\)
  is since the representing matrix of~\(\lambda\) is lower triangular
  with diagonal terms~\(\alpha^*\). Note that
  \(\alpha^* = \alpha \tensor \id_R\) is a quasi-isomorphism since
  \(R\) is strongly graded and hence is a projective
  \(R_{(0,0)}\)-module.
\end{proof}

\section{Novikov homology}

Let \(K\) be a \(\bZ^2\)-graded \(R\)-module, with \(R\) a
\(\bZ^2\)-graded ring as usual. In analogy to~\eqref{eq:Novikov_type},
we can define
\begin{equation*}
  K_{*}[x,x\inv] \nov{y} = \bigcup_{n \geq 0} \prod_{y \geq -n}
  \bigoplus_{x \in \bZ} K_{(x,y)} \ ,
\end{equation*}
which is an \(\R[x,x\inv]\nov{y}\)-module in a natural way; similarly,
we can define
\begin{equation*}
  K_{*}\nov{x,y} = \bigcup_{n \geq 0} \prod_{x,y \geq -n}
  K_{(x,y)} \ ,
\end{equation*}
which is an \(\R\nov{x,y}\)-module in a natural way; and so on.

\medbreak

For \(R\)-modules of the form \(K = M \tensor_{R_{(0,0)}} R\), with
\(M\) an \(R_{(0,0)}\)-module, these constructions are close to the
usual induction functors; for example:

\begin{proposition}
  \label{noviso}
  Let $M$ be a right $R_{(0,0)}$-module. There is a natural map
  \begin{multline*}
    \Psi_M \colon M \tensor_{R_{(0,0)}} \R \nov{x,y} \rTo %
    (M \tensor_{R_{(0,0)}} R)_* \nov {x,y} \ , \\ %
    \quad m \tensor \sum_{x, y} r_{x, y} \mapsto
    \sum_{x, y} m\tensor r_{x, y}
  \end{multline*}
  which is an isomorphism if \(M\) is finitely presented.
  % Thus for a homomorphism \(f \colon M \rTo M'\) of finitely presented
  % \(R_{(0,0)}\)-modules the maps \(f_*\nov{x,y}\) and
  % \(f \tensor \R\nov{x,y}\) are isomorphic \emph{via} \(\Psi_M\)
  % and~\(\Psi_{M'}\).
\end{proposition}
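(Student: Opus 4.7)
The plan is to verify the claim by reducing to free modules \(M = R_{(0,0)}^n\) via the standard technique: check that both functors involved are right exact, verify directly for finitely generated free \(R_{(0,0)}\)-modules, and apply the five lemma (or equivalently, a cokernel-chasing argument) using a finite presentation of~\(M\).

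First I would check that \(\Psi_M\) is a well-defined and natural \(\R\nov{x,y}\)-linear map. This is a formal matter: the formula \(m \tensor \sum r_{x,y} \mapsto \sum m \tensor r_{x,y}\) is \(R_{(0,0)}\)-balanced since, for \(s \in R_{(0,0)}\), the element \(sr_{x,y}\) still lies in~\(R_{(x,y)}\) and the support estimate \(x,y \geq -n\) is preserved; naturality in~\(M\) is immediate.

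Next I would verify the claim for free modules. For \(M = R_{(0,0)}\), the source of~\(\Psi_M\) is \(\R\nov{x,y}\) and the target is \(R_*\nov{x,y}\), which coincide under the identifications set up in the section on graded rings; and \(\Psi_{R_{(0,0)}}\) is the identity. By additivity of both sides in~\(M\) (a direct sum of \(n\) copies commutes with the filtered union over~\(n\) thresholds because the maximum of finitely many integers is finite), \(\Psi_{R_{(0,0)}^n}\) is an isomorphism for every \(n \geq 0\).

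Choose a finite presentation \(R_{(0,0)}^m \rTo R_{(0,0)}^n \rTo M \rTo 0\). The functor \(- \tensor_{R_{(0,0)}} \R\nov{x,y}\) is right exact because tensor products are. For the other side, one shows that the composite functor \(K \mapsto K_*\nov{x,y}\), from \(\bZ^2\)-graded \(R\)-modules to \(\R\nov{x,y}\)-modules, is exact: the assignment \(K \mapsto K_{(x,y)}\) is exact on graded morphisms, arbitrary products of abelian groups are exact, and filtered unions (directed colimits) are exact. Combined with the right exactness of \(- \tensor_{R_{(0,0)}} R\), this makes the composite \(M \mapsto (M \tensor_{R_{(0,0)}} R)_*\nov{x,y}\) right exact. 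The naturality of~\(\Psi\) therefore yields a commutative diagram
\begin{equation*}
\begin{diagram}[small]
R_{(0,0)}^m \tensor \R\nov{x,y} & \rTo & R_{(0,0)}^n \tensor \R\nov{x,y} & \rTo & M \tensor \R\nov{x,y} & \rTo & 0 \\
\dTo^{\Psi_{R_{(0,0)}^m}} & & \dTo^{\Psi_{R_{(0,0)}^n}} & & \dTo^{\Psi_M} & & \\
(R_{(0,0)}^m \tensor R)_*\nov{x,y} & \rTo & (R_{(0,0)}^n \tensor R)_*\nov{x,y} & \rTo & (M \tensor R)_*\nov{x,y} & \rTo & 0
\end{diagram}
\end{equation*}
with exact rows and with the two left-hand vertical maps isomorphisms. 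The five lemma (in its short right-exact form) then forces \(\Psi_M\) to be an isomorphism.

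The only mildly technical step is the right exactness of \(K \mapsto K_*\nov{x,y}\); this is the crucial place where the filtered-union structure of the \textsc{Novikov} construction is used, and it is what makes the five-lemma reduction work. Everything else is formal.
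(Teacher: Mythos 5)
Your proposal is correct and follows exactly the approach sketched in the paper: verify for \(M = R_{(0,0)}\), extend to finite free modules, and reduce the general finitely presented case via a presentation and a cokernel comparison. You have simply filled in the details that the paper leaves as ``standard techniques'', and your identification of the exactness of \(K \mapsto K_{*}\nov{x,y}\) (via exactness of products, of taking graded pieces, and of the filtered union) as the key ingredient that makes the reduction work is an accurate reading of where the content of the argument lies.
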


\begin{proof}
  This can be verified by standard techniques: establish the result
  for \(M = R_{(0,0)}\) first (in which case it is trivial), extend to
  the case \(M = R_{(0,0)}^n\), and then treat the general case using
  a finite presentation of~\(M\). 
\end{proof}

From now on, assume that \(C\) is a bounded below complex of finitely
generated projective \(R\)-modules which is \(R_{(0,0)}\)-finitely
dominated. Thus we choose, once and for all, a bounded complex of
finitely generated projective \(R_{(0,0)}\)-modules \(D\) and mutually
inverse chain homotopy equivalences \(\alpha \colon C \rTo D\) and
\(\beta \colon D \rTo C\), and a chain homotopy \(H \colon \id_C
\simeq \beta \alpha\) so that \(dH+Hd = \beta \alpha - \id_C\).

The \textsc{Mather} trick~\ref{thm:Mather-trick} guarantees
that \(C\) and \(\torb{\alpha}{\beta}{H}\) are quasi-isomorphic; as
both \(R\)-module complexes are bounded below and consist of
projective \(R\)-modules, they are in fact chain homotopy equivalent.

\begin{lemma}
  \label{lem:nov-xy-acyclic}
  The chain complex \(C \tensor_{R} \R\nov{x,y}\) is contractible.
\end{lemma}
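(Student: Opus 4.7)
My first step would be to apply the \textsc{Mather} trick (Theorem~\ref{thm:Mather-trick}): since both $C$ and the algebraic torus $T = \torb{\alpha}{\beta}{H}$ are bounded below complexes of projective $R$-modules (for $T$, each $D_n \tensor_{R_{(0,0)}} R$ is $R$-projective because $R$ is strongly graded), the zigzag of quasi-isomorphisms supplied by the Mather trick yields a chain homotopy equivalence $C \simeq T$ of $R$-module complexes. It therefore suffices to show that $T \tensor_R \R\nov{x,y}$ is contractible.

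By base change, $T \tensor_R \R\nov{x,y}$ is the totalisation of the homotopy commutative square obtained from $T(\alpha, \beta; H)$ by replacing each corner $D \tensor_{R_{(0,0)}} R$ with $D \tensor_{R_{(0,0)}} \R\nov{x,y}$; the horizontal and vertical arrows become the natural extensions of $f_1 = \id^{*} - \alpha^{*} \chi_{e_1} \beta^{*}$ and $f_2 = \id^{*} - \alpha^{*} \chi_{e_2} \beta^{*}$, together with an additional diagonal term coming from $K$. The key claim is that the extension of $f_1$ is a chain isomorphism on $D \tensor_{R_{(0,0)}} \R\nov{x,y}$.

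To prove the claim, I observe that by Lemma~\ref{lem:chi_homogeneous} the operator $\chi_{e_1}$ shifts the $\bZ^2$-grading (carried by the $R$-factor) by $(1, 0)$, while $\alpha^{*} = \alpha \tensor \id_R$ and $\beta^{*} = \beta \tensor \id_R$ preserve the grading; hence $g := \alpha^{*} \chi_{e_1} \beta^{*}$ shifts degrees by $(1, 0)$, and $g^k$ by $(k, 0)$. For any $z \in D \tensor_{R_{(0,0)}} \R\nov{x,y}$ supported in $\{(i, j) : i, j \geq -n\}$, the element $g^k(z)$ has support in $\{(i, j) : i \geq k - n,\ j \geq -n\}$; in particular, for each fixed $(i, j)$ only the values $k = 0, 1, \ldots, i + n$ contribute to the $(i, j)$-th homogeneous component of $\sum_{k \geq 0} g^k(z)$, and the total support of the sum remains in $\{(i, j) : i, j \geq -n\}$. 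Thus $\sum_{k \geq 0} g^k$ defines an endomorphism of $D \tensor_{R_{(0,0)}} \R\nov{x,y}$, and this endomorphism is a chain map since $\chi_{e_1}$ commutes with the $R$-linear differential. A telescoping computation then yields $(\id - g) \sum_k g^k = \id = \sum_k g^k (\id - g)$, so $f_1$ is a chain isomorphism after base change.

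With $f_1$ a chain isomorphism after extension, the two-term complex $\cone(f_1)$ making up each row of the tensored square is contractible. The tensored torus can be written as an iterated mapping cone built from these rows, namely the cone of a chain map $\cone(f_1) \rTo \cone(f_1)$ assembled from $f_2$ and the homotopy $K$; as the cone of a map between contractible complexes of projective $\R\nov{x,y}$-modules, it is contractible itself. The main obstacle is the third paragraph: one must exploit the asymmetry of $\R\nov{x,y}$ (support bounded below, unrestricted above) to legitimise the geometric series for the inverse, and carefully track the $\bZ^2$-grading to confirm that the sum lands back in $D \tensor_{R_{(0,0)}} \R\nov{x,y}$.
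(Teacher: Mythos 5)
Your argument follows essentially the same strategy as the paper's proof: apply the \textsc{Mather} trick to replace $C$ by $\torb{\alpha}{\beta}{H}$, base-change the defining homotopy-commutative square over $\R\nov{x,y}$, and observe that the geometric series $\sum_{k\geq 0}(\alpha^*\chi_{e_1}\beta^*)^k$ inverts $\id^*-\alpha^*\chi_{e_1}\beta^*$ after this base change. Your final step (the torus is an iterated cone, and $\cone(f_1)$ is contractible once $f_1$ is an isomorphism, so the whole thing is contractible) is the conceptual version of the paper's explicit computation that the matrix~$p$ of~\eqref{eq:p} satisfies $dp+pd=q$ with $q$ invertible; these are the same contraction in different dress. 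One small point to make explicit: for the formal sum $\sum_{k\geq 0} g^k(z)$ to land inside $D\tensor_{R_{(0,0)}}\R\nov{x,y}$ itself, rather than in a larger completion, you need the identification of this module with $D_*\nov{x,y}$ provided by Proposition~\ref{noviso}, which is where finite generation of the $D_n$ is actually used; the paper invokes this explicitly, and your support-tracking argument implicitly presupposes it.
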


\begin{proof}
  By the previous remarks, it is enough to show that the chain complex
  \(\torb{\alpha}{\beta}{H} \tensor_{R} \R\nov{x,y}\) is
  acyclic. Recall that \(\torb{\alpha}{\beta}{H}\) is obtained from
  the diagram~\ref{diag:T} by a totalisation process, which commutes
  with induction. Thus we can realise \(\torb{\alpha}{\beta}{H}
  \tensor_{R} \R\nov{x,y}\) by totalising the following diagram:
  \begin{equation*}
    \begin{diagram}[labelstyle=\scriptstyle]
      D \tensor_{R_{(0,0)}} \R\nov{x,y} & \rTo^{\scriptstyle
        \id^*-\alpha^*\chi_{e_1}\beta^*}     & D \tensor_{R_{(0,0)}} \R\nov{x,y}                                                                                                           \\
      \dTo^{\id^*-\alpha^*\chi_{e_2}\beta^*} & \rdTo^{\alpha^*\chi_{e_1}H^*\chi_{e_2}\beta^*}_{-\alpha^*\chi_{e_2}H^*\chi_{e_1}\beta^*} & \dTo_{\id^*-\alpha^*\chi_{e_2}\beta^*} \\
      D \tensor_{R_{(0,0)}} \R\nov{x,y} & \rTo_{\id^*-\alpha^*\chi_{e_1}\beta^*}
      & D \tensor_{R_{(0,0)}} \R\nov{x,y}
    \end{diagram}
  \end{equation*}
  (We have used implicitly that the two functors
  \(\nix \tensor_{R_{(0,0)}} \R\nov{x,y}\) and
  \(\nix \tensor_{R_{(0,0)}} R \tensor_{R} \R\nov{x,y}\) are naturally
  isomorphic.) By Lemma~\ref{noviso}, we can re-write this
  further as the totalisation of the diagram
  \begin{equation}
    \label{diag:T-nov}
    \begin{diagram}[labelstyle=\scriptstyle]
      D_*\nov{x,y} & \rTo^{\scriptstyle
        \id^*-\alpha^*\chi_{e_1}\beta^*}     & D_*\nov{x,y}                                                                                                           \\
      \dTo^{\id^*-\alpha^*\chi_{e_2}\beta^*} &
      \rdTo^{\alpha^*\chi_{e_1}H^*\chi_{e_2}\beta^*}_{-\alpha^*\chi_{e_2}H^*\chi_{e_1}\beta^*}
      & \dTo_{\id^*-\alpha^*\chi_{e_2}\beta^*} \\
      D_*\nov{x,y} & \rTo_{\id^*-\alpha^*\chi_{e_1}\beta^*}
      & D_*\nov{x,y}
    \end{diagram}
  \end{equation}
  with maps suitably interpreted. To wit, an element~\(z\) of
  \(D_*\nov{x,y}\) (in some fixed chain level) has the form \(z =
  \sum_{x,y \geq a} z_{x,y}\) for some \(a \in \bZ\) and certain
  \(z_{x,y} \in D \tensor_{R_{(0,0)}} R_{x,y}\), and
  \begin{equation*}
    (\id^*-\alpha^*\chi_{e_1}\beta^*) (z) = \sum_{x,y \geq a} z_{x,y} %
    - \alpha^{*} \chi_{e_1} \beta^{*} (z_{x-1, y}) \ ,
  \end{equation*}
  see Lemma~\ref{lem:chi_homogeneous}; similar formul\ae{} can be
  written out for the other maps.

  The point is that \textit{the self-map
    \(\id^*-\alpha^*\chi_{e_1}\beta^*\) of \(D_*\nov{x,y}\) is an
    iso\-morphism with inverse given by the ``geometric series''}
  \begin{multline*}
    \qquad P(z) = \sum_{k \geq 0} \big( \alpha^* \chi_{e_1} \beta^* \big)^{k}
    (z) \\ = z + \alpha^* \chi_{e_1} \beta^* (z) + \alpha^* \chi_{e_1}
    \beta^* \alpha^* \chi_{e_1} \beta^* (z) + \ldots \ . \qquad
  \end{multline*}
  This follows immediately from the usual telescoping sum argument,
  once it is understood that the series actually defines a
  well-defined self-map of \(D_*\nov{x,y}\). But this is the case
  because \(\alpha^* \chi_{e_1} \beta^*\) maps an element of
  degree~\((x,y)\) to an element of degree~\((x+1,y)\).

  It is now a matter of computation to verify that the matrix
  \begin{equation}
    \label{eq:p}
    p =
      \begin{pmatrix}
        0 & P & 0 & 0 \\
        0 & 0 & 0 & 0 \\
        0 & 0 & 0 & -P \\
        0 & 0 & 0 & 0 \\
      \end{pmatrix}
  \end{equation}
  almost defines a contraction of the totalisation
  of~\eqref{diag:T-nov}, in the sense that \(dp + pd\) is an
  automorphism~\(q\) (in each chain level) --- in fact, \(q\) is given
  by a triagonal matrix with identity diagonal entries --- where \(d\)
  denotes the boundary map of the totalisation. It follows that
  \(q\inv p\) is a chain contraction as required.
\end{proof}

\begin{lemma}
  \label{lem:Laurent-Novikov_acyclic}
  The complex \(C \tensor_{R} \R[y,y\inv]\nov{x}\) is contractible.
\end{lemma}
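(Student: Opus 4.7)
The plan is to follow the pattern of the proof of Lemma~\ref{lem:nov-xy-acyclic} almost verbatim, since in \(\R[y,y\inv]\nov{x}\) the \(x\)\nbd-degrees are still bounded below even though the \(y\)\nbd-degrees range freely over all of~\(\bZ\). This is precisely what is needed to make the same geometric series go through.

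First, by the \textsc{Mather} trick (Theorem~\ref{thm:Mather-trick}) the complex \(C\) is chain homotopy equivalent to the algebraic torus \(\torb{\alpha}{\beta}{H}\), since both are bounded below complexes of projective \(R\)\nbd-modules. It therefore suffices to show that \(\torb{\alpha}{\beta}{H} \tensor_{R} \R[y,y\inv]\nov{x}\) is acyclic. Second, using the appropriate analogue of Proposition~\ref{noviso} for \(\R[y,y\inv]\nov{x}\) (proved by exactly the same reduction via a finite presentation of each chain module of~\(D\)), and the fact that totalisation commutes with the induction functor, we may realise this complex as the totalisation of a diagram of the same shape as~\(T(\alpha, \beta; H)\) in which each entry \(D \tensor_{R_{(0,0)}} R\) is replaced by \(D_{*}[y,y\inv]\nov{x}\).

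Third, the key observation is that the self-map \(\id^*-\alpha^*\chi_{e_1}\beta^*\) of \(D_{*}[y,y\inv]\nov{x}\) remains an isomorphism, with inverse given by the geometric series
\begin{equation*}
  P(z) = \sum_{k \geq 0} \big( \alpha^* \chi_{e_1} \beta^* \big)^{k} (z) \ .
\end{equation*}
Indeed, by Lemma~\ref{lem:chi_homogeneous} the operator \(\alpha^* \chi_{e_1} \beta^*\) sends a homogeneous element of degree~\((x,y)\) to one of degree~\((x+1,y)\), so if \(z\) has \(x\)\nbd-amplitude bounded below by~\(a\) then the contribution to the coefficient of \(x^{N} y^{M}\) involves only finitely many \(k\). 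Crucially, since \(\chi_{e_1}\) leaves the \(y\)\nbd-degree untouched, the \(y\)\nbd-support of \(P(z)\) in each fixed \(x\)\nbd-degree remains finite, so \(P(z)\) genuinely lies in \(D_{*}[y,y\inv]\nov{x}\); the fact that the \(y\)\nbd-direction is Laurent rather than power series plays no role in the telescoping argument.

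Fourth and finally, with the inverse~\(P\) in hand, the identical matrix~\eqref{eq:p} defines a map~\(p\) such that \(dp + pd\) is a triangular automorphism~\(q\) of each chain module of the totalisation, so that \(q\inv p\) is the desired chain contraction. The only point demanding care — and hence the main (mild) obstacle — is checking that \(P\) really lands in \(D_{*}[y,y\inv]\nov{x}\) and not in a larger completion; this is precisely the compatibility of the \(x\)\nbd-shift of \(\alpha^* \chi_{e_1} \beta^*\) with the asymmetric filtration defining \(\R[y,y\inv]\nov{x}\), as noted above.
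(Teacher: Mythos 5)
Your proposal is correct and follows the paper's proof essentially verbatim: Mather trick to pass to the algebraic torus, the analogue of Proposition~\ref{noviso} to identify the induced complex as a totalisation over \(D_{*}[y,y\inv]\nov{x}\), inversion of \(\id^* - \alpha^*\chi_{e_1}\beta^*\) by the geometric series~\(P\), and the contraction \(q\inv p\) built from the matrix in~\eqref{eq:p}. You spell out slightly more carefully than the paper why \(P(z)\) actually lands in \(D_{*}[y,y\inv]\nov{x}\) (finitely many \(k\) contribute to each \(x\)-slice, and the \(y\)-support in each fixed \(x\)-degree is a finite union of finite sets), which is a helpful clarification but not a different argument.
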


\begin{proof}
  This is shown just like the previous Lemma.  It is enough to
  demonstrate that the chain complex
  \(\torb{\alpha}{\beta}{H} \tensor_{R} \R[y,y\inv]\nov{x}\) is
  acyclic; this complex can be written, using a variant of
  Lemma~\ref{noviso}, as the totalisation of the following diagram:
  \begin{equation}
    \begin{diagram}[labelstyle=\scriptstyle]
      D_*[y,y\inv]\nov{x} & \rTo^{\scriptstyle
        \id^*-\alpha^*\chi_{e_1}\beta^*}     & D_*[y,y\inv]\nov{x}                                                                                                           \\
      \dTo^{\id^*-\alpha^*\chi_{e_2}\beta^*} &
      \rdTo^{\alpha^*\chi_{e_1}H^*\chi_{e_2}\beta^*}_{-\alpha^*\chi_{e_2}H^*\chi_{e_1}\beta^*}
      & \dTo_{\id^*-\alpha^*\chi_{e_2}\beta^*} \\
      D_*[y,y\inv]\nov{x} & \rTo_{\id^*-\alpha^*\chi_{e_1}\beta^*}
      & D_*[y,y\inv]\nov{x}
    \end{diagram}
  \end{equation}
  \textit{The self-map \(\id^*-\alpha^*\chi_{e_1}\beta^*\)
    of \(D_*[y,y\inv]\nov{x}\) is an iso\-morphism with inverse}
  \(P(z) = \sum_{k \geq 0} \big( \alpha^* \chi_{e_1} \beta^*
    \big)^{k} (z)\);
  % \begin{equation*}
  %   \qquad P(z) = \sum_{k \geq 0} \big( \alpha^* \chi_{e_1} \beta^*
  %   \big)^{k} (z) \ ;
  % \end{equation*}
  the matrix~\(p\) from~\eqref{eq:p}
  % \begin{equation*}
  %   p =
  %     \begin{pmatrix}
  %       0 & P & 0 & 0 \\
  %       0 & 0 & 0 & 0 \\
  %       0 & 0 & 0 & -P \\
  %       0 & 0 & 0 & 0 \\
  %     \end{pmatrix}
  % \end{equation*}
  is such that \(dp + pd\) is an automorphism~\(q\) (in each chain
  level), where \(d\) denotes the boundary map of the totalisation.
  Consequently, \(q\inv p\) is a chain contraction.
\end{proof}

\begin{proof}[Proof of ``only if'' in Theorem~\ref{thm:main}]
  By Lemma~\ref{lem:nov-xy-acyclic}, the complex
  \(C \tensor_{R} \R\nov{x,y}\) is acyclic. Replacing \(R\) by the
  strongly \(\bZ^2\)-graded ring \(\bar R\) with
  \(\bar R_{(x,y)} = R_{(-x,y)}\), effectively substituting \(x\inv\)
  for~\(x\), gives that
  \(C \tensor_{\bar R} \bar\R\nov{x,y} = C \tensor_{R}
  \R\nov{x\inv,y}\) is acyclic. The remaining cases
  of~\eqref{eq:cond_vertex} are dealt with by similar re-indexing.

  By Lemma~\ref{lem:Laurent-Novikov_acyclic}, the complex
  \(C \tensor_{R} \R[y,y\inv]\nov{x}\) is acyclic. Replacing \(R\) by
  the strongly \(\bZ^2\)-graded ring \(\bar R\) with
  \(\bar R_{(x,y)} = R_{(-x,y)}\) gives that
  \(C \tensor_{\bar R} \bar\R[y,y\inv]\nov{x} = C \tensor_{R}
  \R[y,y\inv]\nov{x\inv}\) is acyclic. The other cases
  of~\eqref{eq:cond_edge} are dealt with by using
  \(\bar R_{(x,y)} = R_{(y,x)}\) and \(\bar R_{(x,y)} = R_{(-y,x)}\),
  respectively.
\end{proof}

\raggedright\small

\end{document}